\documentclass[12pt,a4paper]{amsart}

\usepackage[T1]{fontenc}
\usepackage{a4wide}

\usepackage{amsmath,amssymb}
\usepackage{amsthm}
\usepackage{dsfont}
\usepackage{bbm}
\usepackage{thmtools}
\usepackage{thm-restate}

\usepackage{graphicx}
\usepackage{tikz}
\usepackage{pgfplots}
\usepackage{float}
\usepackage{subcaption}
\usepackage{placeins}

\pgfplotsset{compat=1.18}

\usepackage{xcolor}
\usepackage[normalem]{ulem}
\usepackage{xargs}
\usepackage{comment}
\usepackage{todonotes} % provides \todo

\usepackage[hidelinks]{hyperref}
\usepackage{cleveref}

\newif\ifthesis
\thesisfalse
\newif\ifdetails
\detailsfalse
\newtheorem{theorem}{Theorem}[section]

\newtheorem{lemma}[theorem]{Lemma}
\newtheorem{proposition}[theorem]{Proposition}
\newtheorem{corollary}[theorem]{Corollary}

\theoremstyle{definition}
\newtheorem{definition}[theorem]{Definition}

\theoremstyle{remark}
\newtheorem{remark}[theorem]{Remark}
\newtheorem{remarks}[theorem]{Remarks}

\crefformat{theorem}{#2#1#3}
\crefformat{fact}{#2#1#3}
\crefformat{lemma}{#2#1#3}
\crefformat{proposition}{#2#1#3}
\crefformat{corollary}{#2#1#3}
\crefformat{result}{#2#1#3}
\crefformat{assumption}{(#2#1#3)}
\crefformat{definition}{(#2#1#3)}
\crefformat{remark}{#2#1#3}
\crefformat{example}{#2#1#3}

\crefformat{equation}{(#2#1#3)}
\crefformat{notation}{(#2#1#3)}
\crefformat{section}{#2#1#3}
\crefformat{item}{#2#1#3}
\crefformat{figure}{#2#1#3}

\newcommand{\N}{\mathbb{N}}
\newcommand{\R}{\mathbb{R}}

\newcommand{\Z}{\mathbb{Z}}

\newcommand{\E}{\mathcal{E}}

\title{Critical Discrete Hardy Inequalities in $L^p$}
\author[K.~Bogdan]{Krzysztof Bogdan}
\address{Faculty of Pure and Applied Mathematics, Wroc\l{}aw University of Science and Technology, Wyb. Wyspia'nskiego 27, 50-370 Wroc\l{}aw, Poland.}
\email{[krzysztof.bogdan@pwr.edu.pl](mailto:krzysztof.bogdan@pwr.edu.pl)}

\author[S.~Gupta]{Shubham Gupta}
\address{Department of Mathematics, Technion--Israel Institute of Technology, Haifa, Israel.}
\email{[sg1019@campus.technion.ac.il](mailto:sg1019@campus.technion.ac.il)}

\author[K.~Kaleta]{Kamil Kaleta}
\address{Faculty of Pure and Applied Mathematics, Wroc\l{}aw University of Science and Technology, Wyb. Wyspia'nskiego 27, 50-370 Wroc\l{}aw, Poland.}
\email{[kamil.kaleta@pwr.edu.pl](mailto:kamil.kaleta@pwr.edu.pl)}

\author[A.~Szczukiewicz]{Antoni Szczukiewicz}
\address{Faculty of Pure and Applied Mathematics, Wroc\l{}aw University of Science and Technology, Wyb. Wyspia'nskiego 27, 50-370 Wroc\l{}aw, Poland.}
\email{264032@student.pwr.edu.pl}

\thanks{The research was supported in part by the OPUS grants 2023/51/B/ST1/02209 and 2023/49/B/ST1/00678 of the National Science Centre (Poland), by the ISF-DFG Lead Agency research grant (ISF No. 1531/25), and by Wroc{\l}aw University of Science and Technology through the Academia Professorum Iuniorum programme.}

\date{August 31, 2026}

\subjclass[2020]{Primary 26D10; % 26D10 Inequalities involving derivatives and differential and integral operators
Secondary 31C45}
\keywords{critical Hardy inequality, discrete fractional Laplacian, Sobolev--Bregman form}

\begin{document}

\begin{abstract}
We establish ground-state representations and critical Hardy inequalities in the discrete $L^p$ setting. In particular, we construct critical Hardy weights for 
the discrete Dirichlet Laplacian on the half-line and the discrete fractional Laplacian on the integers
for all $p\in (1,\infty)$. Our approach uses Sobolev--Bregman forms, for which the ground-state representations are exact identities and the corresponding Hardy weights are expressed through linear operators acting on powers of positive superharmonic functions.
\end{abstract}

\maketitle

\section{Introduction}
Hardy-type inequalities play an important role in spectral theory, mathematical physics, partial differential equations, potential theory, and probability. The subject goes back to the early twentieth century;  for historical background, see Kufner, Maligranda, and Persson \cite{MR2256532}. In what follows, we consider $$p\in(1,\infty).$$ Let $\N:=\{1,2,\ldots\}$ and $\N_0:=\{0,1,2,\ldots\}$.
The classical discrete Hardy inequality reads
\begin{equation}\label{classical lp Hardy}
\sum_{x \in \N} \bigl|u(x)-u(x-1)\bigr|^p
\geq
\left(\frac{p-1}{p}\right)^p
\sum_{x \in \N} \frac{|u(x)|^p}{x^p}
\end{equation}
for finitely supported functions $u:\N_0\to\R$ satisfying $u(0)=0$.
For $p=2$, we get
\begin{equation}\label{classical quadratic Hardy}
\sum_{x \in \N} \bigl(u(x)-u(x-1)\bigr)^2
\geq
\frac{1}{4} \sum_{x \in \N} \frac{u(x)^2}{x^2}.
\end{equation}
We consider the non-negative discrete Dirichlet Laplacian on the
half-line,
$$
\Delta u(x)
:=
2u(x)-u(x-1)-u(x+1),
\qquad x\in\N,
$$
where $u(0)=0$. By the identity \eqref{e.rg}, the inequality \eqref{classical quadratic Hardy}
can equivalently be written as
$$
\langle\Delta u,u\rangle_\N
\geq
\langle wu,u\rangle_\N,
$$
where
$$
w(x):=\frac{1}{4x^2},
\qquad x\in\N,
$$
and
$$
\langle f,g\rangle_\N
:=
\sum_{x\in\N}f(x)g(x).
$$
%is the usual inner product on $\ell^2(\N)$, the space of square-summable functions on $\N$.
The constant $1/4$ in \eqref{classical quadratic Hardy} is sharp: it cannot be replaced by a larger one. Keller, Pinchover, and Pogorzelski \cite{MR3779222} showed, however, that the weight $1/(4x^2)$ can be improved pointwise. More precisely, they replaced \eqref{classical quadratic Hardy} by
\begin{equation}\label{KPP Hardy}
\sum_{x \in \N} \bigl(u(x)-u(x-1)\bigr)^2
\geq
\sum_{x \in \N} w^{\mathrm{KPP}}(x)u(x)^2,
\end{equation}
where
\begin{equation}\label{KPP weight}
w^{\mathrm{KPP}}(x)
:=
2-\sqrt{1-\frac1x}-\sqrt{1+\frac1x}>\frac{1}{4x^2},
\qquad x\in\N.
\end{equation}
\ifdetails
Indeed, let \(s:=1/n\) and put
\[
    S:=\sqrt{1-s}+\sqrt{1+s}.
\]
Then
\[
    w^{\mathrm{KPP}}(n)
    =
    2-S.
\]
Since
\[
    S^2
    =
    2+2\sqrt{1-s^2},
\]
we get
\[
\begin{aligned}
    w^{\mathrm{KPP}}(n)
    &=
    2-S
    =
    \frac{4-S^2}{2+S}  \\
    &=
    \frac{2-2\sqrt{1-s^2}}{2+S}  \\
    &=
    \frac{2s^2}{\bigl(1+\sqrt{1-s^2}\bigr)(2+S)}.
\end{aligned}
\]
Now \(0<s\leq1\), hence
\[
    1+\sqrt{1-s^2}\leq2
\]
and
\[
    S=\sqrt{1-s}+\sqrt{1+s}<2.
\]
Therefore
\[
    \bigl(1+\sqrt{1-s^2}\bigr)(2+S)<8.
\]
It follows that
\[
    w^{\mathrm{KPP}}(n)
    >
    \frac{2s^2}{8}
    =
    \frac{s^2}{4}
    =
    \frac{1}{4n^2}.
\]
\fi
Furthermore, they proved that the weight $w^{\mathrm{KPP}}$ satisfies strong optimality properties. In particular, it is critical: it cannot be increased even at a single point without destroying the validity of \eqref{KPP Hardy}. Critical Hardy weights have been studied extensively in recent years, especially in the setting of discrete graphs \cite{MR5007873,MR5063900,PhilippHake,MR3774437,StampachWaclawek2026}. We also refer to \cite{MR3170212,MR3436428,hou2025optimal} for related results in the continuum. In the $\ell^p$ setting, analogous questions have been considered on locally finite graphs \cite{MR4768506,vstampach2026optimal}. Much less is known, however, for nonlocal discrete operators whose kernels couple each point to infinitely many others. Here a basic example is the discrete fractional Laplacian on $\Z$, whose kernel couples every pair of distinct points.
We thus seek analogues of \eqref{KPP Hardy} for general $p\in(1,\infty)$.
The left-hand side of \eqref{classical lp Hardy},
\begin{equation}\label{p energy}
\sum_{x \in \N}|u(x)-u(x-1)|^p,
\end{equation}
is the usual Sobolev form and a natural candidate.
In the present paper, however, we choose to work with a different quantity. Namely we argue for the use in Hardy-type inequalities of the following \emph{Sobolev--Bregman form}
\begin{equation}\label{def: SB}
\mathcal{E}_{p}^{X,k}(u)
:=
\frac{1}{2}
\sum_{x\in X}\sum_{y\in X}
(u(x)-u(y))
\bigl(u(x)^{\langle p-1\rangle}
-u(y)^{\langle p-1\rangle}\bigr)
k(x,y).
\end{equation}
Here $X$ is a countable set, $k:X\times X\to[0,\infty)$ is a nonnegative symmetric function called \emph{kernel}, $u:X\to\R$, and, for $\beta>0$, we denote
$$
a^{\langle \beta\rangle}:=|a|^\beta\operatorname{sgn}a,
\qquad a\in\R.
$$
The Sobolev--Bregman form is associated with the convex function
$$
\Phi_p(t):=\frac{|t|^p}{p},\qquad t\in\R,
$$
and couples the increments of $u$ with the increments of
\begin{equation}\label{e.dpf}
\Phi_p'(a)=a^{\langle p-1\rangle}, \qquad a\in\R.
\end{equation}
Since $\Phi_p'$ is increasing, each summand in \eqref{def: SB} is
nonnegative, so the sum is well defined, although possibly infinite.
Precise assumptions on $k$ and a convenient class of functions $u$
making $\mathcal{E}_{p}^{X,k}(u)$ finite will be specified later. Our main abstract result, Theorem~\ref{thm: abstract ground state representation},
establishes the Hardy inequality \eqref{abstract Hardy inequality} and
gives an exact ground-state representation of its deficit. More
precisely, it constructs a Hardy weight $w_h$ from a suitable positive
superharmonic function $h$ and expresses
$$
\mathcal E_p^{X,k}(u)
-
\sum_{x\in\Omega}w_h(x)|u(x)|^p
$$
as the explicit nonnegative remainder in
\eqref{abstract ground state representation}.

To further motivate the result and the form, which might look unfamiliar at first, we consider the discrete Dirichlet Laplacian on the half-line. It is induced by the nearest-neighbour kernel
$$
k(x,y):=
\begin{cases}
1, & \text{if } |x-y|=1,\\
0, & \text{otherwise}.
\end{cases}
$$
For any function $u:\N_0\to\R$ satisfying $u(0)=0$, the form
\eqref{def: SB} becomes
\begin{align}\label{Dirichlet Laplacian p-form}
\mathcal{E}_{p}^{\N_0}(u)
&:=
\frac12
\sum_{\substack{x,y\in\N_0\\ |x-y|=1}}
\bigl(u(x)-u(y)\bigr)
\bigl(
u(x)^{\langle p-1\rangle}
-
u(y)^{\langle p-1\rangle}
\bigr) \nonumber\\
&=
\sum_{x\in\N}
\bigl(u(x)-u(x-1)\bigr)
\bigl(
u(x)^{\langle p-1\rangle}
-
u(x-1)^{\langle p-1\rangle}
\bigr).
\end{align}
If, in addition, $u\in\ell^p(\N)$, then all the series occurring
in the following computation are absolutely convergent by Hölder's inequality: 
\begin{align}\label{e.rg}
    \mathcal{E}_{p}^{\N_0}(u)
    &=
        \sum_{x \in \N}
    \bigl(u(x)-u(x-1)\bigr)u(x)^{\langle p-1\rangle}
    +
    \sum_{x \in \N}
    \bigl(u(x)-u(x+1)\bigr)u(x)^{\langle p-1\rangle}
    \nonumber\\
    &=
    \bigl\langle \Delta u,u^{\langle p-1\rangle}\bigr\rangle_\N.
\end{align}
Thus, even though $\mathcal{E}_{p}^{\N_0}$ is no longer a quadratic
form when $p\neq2$, it is directly linked to the discrete Laplacian
through the pairing of $\Delta u$ with $u^{\langle p-1\rangle}$. Such forms in the continuum can be traced back to \cite{stroock2012introduction} and arise naturally in the study of semigroups;  see the discussion in
Subsection~\ref{sec:c}.
We emphasize again that the Sobolev--Bregman form \eqref{def: SB} is fundamentally different from the Sobolev form \eqref{p energy}.
When $p=2$, both the Sobolev and the Sobolev--Bregman
forms reduce to the same quadratic Dirichlet form, to wit, $\mathcal{E}_{2}^{X,k}(u)$. For $p\neq2$,
however, they provide two different extensions of the
quadratic theory. The former is based on the $p$-th power of the increments, whereas
the latter retains the operator-theoretic structure
$\langle \Delta u,u^{\langle p-1\rangle}\rangle$.

Ground-state representations for Schrödinger operators in the continuum
are classical tools in the spectral theory of differential operators.
Their analogues in discrete and nonlocal settings have proved particularly
effective in constructing Hardy weights with various optimality properties
\cite{MR4597627,MR2415477,MR4680228,vstampach2026optimal,
StampachWaclawek2026}. The purpose of this paper is to develop a discrete
$L^p$ theory of critical Hardy inequalities, with Sobolev--Bregman forms
providing the underlying nonlinear framework. In particular, we seek
nonquadratic analogues of the criticality theory for Hardy weights on
graphs developed by Keller, Pinchover, and Pogorzelski
\cite{MR3774437}.

The use of Sobolev--Bregman forms is rooted in several earlier developments.
Hardy--Stein identities and related decompositions of $L^p$ norms in terms
of Bregman divergences were established in
\cite{MR3251822,MR3556449}, while the relation between $L^p$ Hardy
inequalities for the fractional Laplacian and contractivity of the
corresponding perturbed semigroups was exploited in \cite{MR4372148}.
A general structural foundation was provided by the Beurling--Deny formula
for Sobolev--Bregman forms obtained by Gutowski and Kwaśnicki
\cite{MR4885983}, which identifies these forms as natural $L^p$
counterparts of Dirichlet forms.

Further motivation comes from fractional Hardy inequalities on the
integers and the discrete half-line \cite{MR4612316,DasFuenteFernandez2026},
Douglas-type identities in $L^p$ \cite{MR4600287,MR4589708}, and, most
directly, critical fractional Hardy inequalities for Sobolev--Bregman
forms associated with homogeneous kernels on $\R$
\cite{BogdanDydaSzczukiewicz2026}. Together, these developments also
suggest the use of Hardy inequalities for Sobolev--Bregman forms in the
study of $L^p$-contractivity of semigroups; we present this application
in Subsection~\ref{sec:c}.

From the point of view of Hardy inequalities, Sobolev--Bregman forms
are in many respects simpler and more transparent than the usual
Sobolev $p$-forms \eqref{p energy}. Their ground-state representations
are exact identities, whereas for Sobolev $p$-forms one obtains
two-sided inequalities known as the \emph{simplified energy estimates}
\cite{MR4597627}. Moreover, the resulting Hardy weights are expressed
in terms of linear operators, with nonlinear dependence appearing only
through powers of superharmonic functions. For Sobolev $p$-forms, by
contrast, the operator itself is nonlinear, which makes the analysis
of Hardy weights more involved.

We establish ground-state representations for the Sobolev--Bregman
forms \eqref{def: SB} on arbitrary discrete spaces $X$ and use them to
derive Hardy weights. We then prove the criticality of the resulting
weights in two concrete settings. The Dirichlet Laplacian on $\N_0$
provides a transparent illustration of the general method, with an
additional feature concerning the class of admissible functions.
The fractional Laplacian on $\Z$ provides a genuinely nonlocal
setting: $k(x,y)>0$ for all distinct $x,y\in\Z$, so interactions
between arbitrarily distant points must be taken into account.
Throughout the paper, we restrict ourselves to discrete spaces
equipped with the counting measure; a systematic extension to general
reference measures will be pursued elsewhere.

We now state our results for these two model cases, beginning with the
Dirichlet Laplacian on $\N_0$.
\subsection{Dirichlet Laplacian}\label{subsec: Dirichlet Laplacian}

We begin with the simpler case of the Dirichlet form $\mathcal{E}_p^{\N_0}$ given by \eqref{Dirichlet Laplacian p-form}. The quadratic case $p=2$ is well studied, and since the discovery of the critical weight $w^{\mathrm{KPP}}$ in \eqref{KPP weight}, infinitely many other critical Hardy weights have been found. In particular, Krej\v{c}i\v{r}\'{\i}k, Laptev, and \v{S}tampach \cite{MR4549127} proved that the weights
$$
w_\alpha(x)
:=
2-
\left(1-\frac{1}{x}\right)^\alpha
-
\left(1+\frac{1}{x}\right)^\alpha,
\qquad x\in\N,
$$
are critical Hardy weights for the form $\mathcal{E}_2^{\N_0}$ for $\alpha\in(0,1/2]$. We extend the result of \cite{MR4549127} to all $p>1$. Below, as usual, $p'\in (1,\infty)$ is the conjugate exponent of $p$, given by
\begin{equation}\label{conjugate exponent}
p':=\frac{p}{p-1},
\qquad
\frac{1}{p}+\frac{1}{p'}=1,
\qquad
(p-1)(p'-1)=1.
\end{equation}
Throughout, $a\wedge b:=\min\{a,b\}$ and $a\vee b:=\max\{a,b\}$ and
we denote
\begin{equation}\label{a_p constant}
a_p:=(p-1)\wedge (p'-1)
=
\begin{cases}
p-1, & 1<p\leq 2,\\[1mm]
p'-1, & 2\leq p<\infty.
\end{cases}
\end{equation}

The following theorem gives a Hardy-type inequality for the Sobolev--Bregman form associated with the discrete Dirichlet Laplacian $\Delta$.

\begin{theorem}\label{thm: critical Hardy weights for Dirichlet Laplacian}
If $p>1$, $\alpha\in(0,1]$, $(p,\alpha)\neq(2,1)$, $u(0)=0$, and $u\in\ell^p(\N,x^{-2})$, then
\begin{equation}\label{e.HiSB}
    \mathcal{E}_p^{\N_0}(u)
    \geq
    \sum_{x\in\N} w_{p,\alpha}(x)|u(x)|^p,
\end{equation}
where
\begin{equation}\label{e.Dirichlet Hardy weight}
    w_{p,\alpha}(x)
    :=
    \frac{1}{a_p+1}
    \frac{\Delta x^{\alpha a_p}}{x^{\alpha a_p}}
    +
    \frac{a_p}{a_p+1}
    \frac{\Delta x^\alpha}{x^\alpha},
    \qquad x\in\N.
\end{equation}
The Hardy weight $w_{p,\alpha}$ is strictly positive; it is critical whenever
\begin{equation}\label{e.Dirichlet critical range}
\alpha\leq\frac{1}{a_p+1}.
\end{equation}
\end{theorem}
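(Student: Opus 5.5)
We would derive the inequality from the abstract ground-state representation, Theorem~\ref{thm: abstract ground state representation}, with $X=\N_0$, the nearest-neighbour kernel and $\Omega=\N$. The key structural fact is that for Sobolev--Bregman forms the Hardy weight produced by a positive superharmonic $h$ is linear in the operator but nonlinear only through powers of $h$.

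\textbf{Choice of $h$ and the weight.} We take $h(x)=x^\alpha$. The natural substitution is $u=h^{\gamma}\varphi$ with exponents split between $u$ and $u^{\langle p-1\rangle}$, so that one factor sees $h^{1/p}$-type powers and the other $h^{1/p'}$-type powers. We expect this to produce weights of the form $\theta\,\Delta h^{\beta_1}/h^{\beta_1}+(1-\theta)\,\Delta h^{\beta_2}/h^{\beta_2}$. Matching with \eqref{e.Dirichlet Hardy weight}, we want $\beta_1=a_p$, $\beta_2=1$ (in units of $\alpha$) and $\theta=1/(a_p+1)$. We will verify that the abstract theorem, applied to the pair $x^{\alpha a_p}$, $x^\alpha$, yields exactly $w_{p,\alpha}$. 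Then \eqref{e.HiSB} follows from nonnegativity of the remainder in \eqref{abstract ground state representation}, first for finitely supported $u$.

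\textbf{Extension to $u\in\ell^p(\N,x^{-2})$.} We truncate, $u_n=u\mathbf 1_{[0,n]}$, and pass to the limit. The right side converges by monotone convergence, since $w_{p,\alpha}(x)=O(x^{-2})$. For the left side, the only boundary term is $(u(n)-0)u(n)^{\langle p-1\rangle}$, so $\mathcal E_p(u_n)\le \mathcal E_p(u)+|u(n)|^p$ roughly. We choose $n$ along a subsequence with $|u(n)|^p\to0$, which exists because $\sum|u(x)|^px^{-2}<\infty$ forces $\liminf |u(n)|^p n^{-2}\cdot n\ldots$. If this is insufficient we will instead use that $\mathcal E_p(u)=\infty$ trivially unless $u$ is tame, and apply Fatou on the remainder.

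\textbf{Positivity.} For $0<\beta\le1$, the function $t\mapsto t^\beta$ is concave, so $\Delta x^\beta\ge0$. Strictness comes from strict concavity when $\beta<1$. Since $\alpha a_p\le 1$ and $\alpha\le 1$, both terms are nonnegative. Both equal $1$ only when $\alpha=1$ and $a_p=1$, i.e.\ $(p,\alpha)=(2,1)$. Otherwise at least one exponent is $<1$, and $w>0$ at every $x\ge1$; at $x=1$ we use $h(0)=0$.

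\textbf{Criticality (main obstacle).} We must show that any $\tilde w\ge w_{p,\alpha}$ with $\tilde w\ne w_{p,\alpha}$ fails. We would build a null-sequence $\varphi_n\to1$, with $u_n=h_*\varphi_n$ and $h_*=x^{\gamma}$ the "ground state" ($\gamma=\alpha/p\cdot(\dots)$, to be determined, presumably $x^{\alpha a_p/(p-1)}$-type). The goal is that the remainder tends to $0$ while $u_n(x_0)\to h_*(x_0)>0$. The condition \eqref{e.Dirichlet critical range} should be exactly the borderline where $\sum h_*^p\cdot|\nabla\varphi|^2$-type remainders can be made small. This requires $h_*^p\approx x$, i.e.\ a logarithmic cutoff $\varphi_n(x)=(1-\log x/\log n)_+$ working precisely when the exponent is $\le1$. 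The hard part will be estimating the nonquadratic remainder: comparing the Bregman-type remainder with a quadratic expression via two-sided bounds like $(b-a)(b^{\langle p-1\rangle}-a^{\langle p-1\rangle})\asymp (b-a)^2(|a|+|b|)^{p-2}$, and then summing against logarithmic cutoffs. Any gain at a point then yields a contradiction, giving criticality.
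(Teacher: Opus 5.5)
Your overall plan (ground-state identity built from $x^\alpha$, concavity for strict positivity, a logarithmic null sequence for criticality with threshold $\alpha(a_p+1)\le 1$) matches the paper. However, the extension from finitely supported $u$ to $u\in\ell^p(\N,x^{-2})$ fails as written. With $u_n=u\mathbf{1}_{[0,n]}$ and only the Hardy inequality for $u_n$, you get $\sum_{x\le n}w_{p,\alpha}(x)|u(x)|^p\le S_n+|u(n)|^p$, where $S_n:=\sum_{x=1}^n(u(x)-u(x-1))(u(x)^{\langle p-1\rangle}-u(x-1)^{\langle p-1\rangle})$. So you need $\liminf_n|u(n)|^p=0$, and that is false in this class. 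For $u=\mathbf{1}_{\N}$ one has $\mathcal{E}_p^{\N_0}(u)=1$ but $\mathcal{E}_p^{\N_0}(u_n)=2$ for every $n$. For $u(x)=x^\beta$ with $0<\beta<1/p$, even $|u(n)|^p\to\infty$. Summability of $|u(x)|^p x^{-2}$ only yields $\liminf_n |u(n)|^p/n=0$. Your fallback does not close the gap either. Finiteness of $\mathcal{E}_p^{\N_0}(u)$ does not force $u(n)\to 0$ (same example). Fatou's lemma only gives lower semicontinuity, whereas you need $\limsup_n\mathcal{E}_p^{\N_0}(u_n)\le\mathcal{E}_p^{\N_0}(u)$.

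The paper exploits $|u(N_j)|^p=o(N_j)$ by replacing the jump with a linear ramp from $u(N_j)$ to $0$ on $[N_j,2N_j]$. By telescoping, the ramp contributes exactly $|u(N_j)|^p/N_j$ to the form. Since $w_{p,\alpha}\lesssim x^{-2}$, it contributes only $O(|u(N_j)|^p/N_j)$ to the weighted sum.

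Alternatively, your sharp cutoff works if you keep the remainder instead of discarding it. In the identity for $u\mathbf{1}_{[0,n]}$, the two remainder terms on the cut edge $\{n,n+1\}$ equal $\frac1p|u(n)|^p\bigl((p-1)t+t^{p-1}\bigr)$ with $t=h(n+1)/h(n)\ge 1$. This is at least $|u(n)|^p$, so $\sum_{x\le n}w_{p,\alpha}|u|^p\le S_n$ for every $n$.

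Two smaller corrections.
\begin{enumerate}
\item[(i)] The function in the identity must be $h=x^{\alpha q_p}$, that is, $x^\alpha$ for $p\le 2$ and $x^{\alpha/(p-1)}$ for $p>2$. With $h=x^\alpha$ and $p>2$ you would need $x^{\alpha(p-1)}$ to be superharmonic, and you would get a different weight. This same $h$ is the ground state in the criticality step; your guess $x^{\alpha a_p/(p-1)}$ is wrong for $p>2$. In both cases $h^p=x^{\alpha(a_p+1)}$, which is what produces the threshold.
\item[(ii)] For the remainder, the paper avoids the singular factor $(|a|+|b|)^{p-2}$ when $p<2$ by taking $u_N=h\,\xi_N^{\langle 2/p\rangle}$ and using $F_p(a^{\langle 2/p\rangle},b^{\langle 2/p\rangle})\le 2p(a-b)^2$. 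This reduces $\mathcal{R}_{p,\alpha}(u_N)$ to $\lesssim\sum_x(\xi_N(x)-\xi_N(x-1))^2x^{\alpha(a_p+1)}$. Your choice $u_n=h\varphi_n$ can also be pushed through, because $s^{p-2}$ is integrable at $0$, but it needs that extra computation.
\end{enumerate}
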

We defer the proof of Theorem~\ref{thm: critical Hardy weights for Dirichlet Laplacian} to Section~\ref{sec: Dirichlet Laplacian}.

\begin{remark}
 We note that the condition $u(0) = 0$ is necessary. For $p=2$, it is well-known that the form \eqref{Dirichlet Laplacian p-form} without this boundary condition is not subcritical, that is, it does not admit a nonzero Hardy weight \cite{MR429378}. By \eqref{equivalence of 2 and p form}, the same conclusion extends to all $p > 1$.    
\end{remark}

\begin{remark}
For $p=2$, Theorem~\ref{thm: critical Hardy weights for Dirichlet Laplacian}
recovers the result of \cite{MR4549127}. By the binomial expansion, 
\begin{equation}\label{Taylor expansion of Dirichlet Hardy weight}
w_{p,\alpha}(x)
=
-2\sum_{i=1}^{\infty}
\left(
\frac{1}{a_p+1}\binom{\alpha a_p}{2i}
+
\frac{a_p}{a_p+1}\binom{\alpha}{2i}
\right)x^{-2i},\qquad x\geq2.
\end{equation}
Since $\alpha,\alpha a_p\in(0,1]$, all terms in this series are
nonnegative. Consequently,
\begin{equation}\label{e.lb}
w_{p,\alpha}(x)\geq c_{p,\alpha}x^{-2},
\qquad x\geq2,
\end{equation}
where
\begin{align*}
c_{p,\alpha}
&:=
-2\left(
\frac{1}{a_p+1}\binom{\alpha a_p}{2}
+
\frac{a_p}{a_p+1}\binom{\alpha}{2}
\right) 
=
\frac{\alpha a_p}{a_p+1}
\bigl(2-\alpha(a_p+1)\bigr).
\end{align*}
The constant $c_{p,\alpha}$ is maximal for
$\alpha=1/(a_p+1)$, in which case
\[
c_{p,\alpha}
=
\frac{a_p}{(a_p+1)^2}
=
\frac{p-1}{p^2}
=\frac{1}{pp'}.
\]
Together with a separate estimate at $x=1$, this leads to the sharp
inverse-square Hardy inequality stated below.
\end{remark}

\begin{theorem}\label{thm: sharp inverse-square Hardy}
Let $p>1$. If $u(0)=0$ and $u\in\ell^p(\N,x^{-2})$, then
\begin{equation}\label{e.sharp inverse-square Hardy}
\mathcal{E}_p^{\N_0}(u)
\geq
\left(\frac{p-1}{p^2}\right)
\sum_{x\in\N}\frac{|u(x)|^p}{x^2}.
\end{equation}
Moreover, the constant $(p-1)/p^2$ is sharp.
%, that is, \eqref{e.sharp inverse-square Hardy} fails with any larger constant.
\end{theorem}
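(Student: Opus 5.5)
We derive the inequality \eqref{e.sharp inverse-square Hardy} directly from Theorem~\ref{thm: critical Hardy weights for Dirichlet Laplacian} with the choice $\alpha:=1/(a_p+1)$. This choice is admissible: $\alpha\in(0,1]$, and $(p,\alpha)=(2,1)$ cannot occur because $a_2=1$ gives $\alpha=1/2$. Then \eqref{e.HiSB} yields $\mathcal{E}_p^{\N_0}(u)\geq\sum_{x\in\N}w_{p,\alpha}(x)|u(x)|^p$. By \eqref{e.lb} and the computation of $c_{p,\alpha}$ in the preceding remark, we have $w_{p,\alpha}(x)\geq \frac{p-1}{p^2}x^{-2}$ for $x\geq2$.

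It remains to check the point $x=1$, that is, $w_{p,\alpha}(1)\geq (p-1)/p^2$. Since $0^\beta=0$, we get $\Delta x^\beta/x^\beta$ at $x=1$ equal to $2-2^\beta$. Hence
\[
w_{p,\alpha}(1)=\frac{1}{a_p+1}\bigl(2-2^{\alpha a_p}\bigr)+\frac{a_p}{a_p+1}\bigl(2-2^{\alpha}\bigr).
\]
Writing $a:=a_p\in(0,1]$ and $\alpha=1/(1+a)$, we must show
\[
2-\frac{2^{a/(1+a)}+a\,2^{1/(1+a)}}{1+a}\geq\frac{a}{(1+a)^2},
\]
using $(p-1)/p^2=a_p/(a_p+1)^2$, which holds in both regimes of $p$. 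For this I would use the elementary bound $2^\beta\leq 1+\beta$ for $\beta\in[0,1]$, which follows from convexity of $\beta\mapsto2^\beta$ and agreement of the two sides at $\beta=0,1$. It gives
\[
2^{a/(1+a)}+a\,2^{1/(1+a)}\leq 1+\frac{a}{1+a}+a+\frac{a}{1+a}=(1+a)+\frac{2a}{1+a}.
\]
So the left-hand side is at least $1-2a/(1+a)^2=(1+a^2)/(1+a)^2$, and $1+a^2\geq a$ holds trivially. This is the separate estimate at $x=1$ mentioned in the remark, and it proves the inequality.

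For sharpness, I would argue by contradiction. Suppose the inequality held with a constant $C>(p-1)/p^2$. Take test functions approximating $x^{1/p}$, the formal extremal profile, truncated: for instance $u_N(x)=x^{1/p}$ for $x\leq N$, a linear or logarithmic cutoff on $N<x\leq N^2$, and $0$ beyond. Alternatively use $u_\varepsilon(x)=x^{1/p-\varepsilon}$, which lies in $\ell^p(\N,x^{-2})$ as required, after checking that the form is finite. Then $\sum |u|^p x^{-2}\approx\sum x^{-1-\varepsilon p}\sim 1/(\varepsilon p)\to\infty$. For the energy, each term $(u(x)-u(x-1))(u(x)^{p-1}-u(x-1)^{p-1})$ is asymptotically $\gamma x^{\gamma-1}\cdot(p-1)\gamma x^{(p-1)\gamma-1}=(p-1)\gamma^2x^{p\gamma-2}$ with $\gamma=1/p-\varepsilon$, plus $O(x^{p\gamma-3})$. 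Thus $\mathcal{E}_p^{\N_0}(u_\varepsilon)=(p-1)\gamma^2\sum x^{-1-\varepsilon p}+O(1)$. Dividing by the divergent sum and letting $\varepsilon\to0$ gives the ratio tending to $(p-1)/p^2$, which contradicts any larger constant.

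The main obstacle is the uniform control of the remainder terms in the sharpness computation, namely Taylor errors at small $x$ and the $x=1$ term. These must be shown to stay $O(1)$ as $\varepsilon\to0$, which a mean-value estimate uniform for $\gamma\in[1/(2p),1/p]$ should give.
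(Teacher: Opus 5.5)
Your proof is correct. The inequality part is the paper's argument: $\alpha_*=1/(a_p+1)$ in Theorem~\ref{thm: critical Hardy weights for Dirichlet Laplacian}, the bound \eqref{e.lb} for $x\geq2$, and $2^t\leq1+t$ at $x=1$. Your intermediate bound $(1+a^2)/(1+a)^2$ is exactly the paper's $\alpha_*^2+\beta_*^2$.

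For sharpness you use the same near-extremal profiles $x^\gamma$ with $\gamma\uparrow1/p$, but you do not truncate them. This is legitimate because $x^{1/p-\varepsilon}\in\ell^p(\N,x^{-2})$, and it saves you the computation of the cutoff energy $E_N$. What the paper's linear cutoff on $[N,2N)$ buys is sharpness already among finitely supported functions. That stronger form is what Corollary~\ref{cor: Dirichlet contractivity} uses (contractivity on $\ell^p(\N)$). From your version you would have to add the truncation argument of Lemma~\ref{lem: truncation Dirichlet half-line}, which works verbatim for the weight $x^{-2}$.

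The uniformity issue you flag as the main obstacle goes away if you use only an upper bound on the energy, as the paper does. Since $\gamma$ and $(p-1)\gamma$ lie in $(0,1)$, the mean value theorem gives, for $x\geq2$,
$(x^\gamma-(x-1)^\gamma)(x^{(p-1)\gamma}-(x-1)^{(p-1)\gamma})\leq(p-1)\gamma^2(x-1)^{p\gamma-2}$.
Adding the $x=1$ term, which equals $1$, yields $\mathcal E_p^{\N_0}(u_\varepsilon)\leq1+(p-1)\gamma^2S_\gamma$, where $S_\gamma=\sum_{x\in\N}x^{p\gamma-2}=\sum_{x\in\N}|u_\varepsilon(x)|^px^{-2}$. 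Hence $C\leq(p-1)\gamma^2+1/S_\gamma$ with no two-sided Taylor control needed.
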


The proof is given in Section~\ref{sec: Dirichlet Laplacian}.

\begin{remark}
For
\[
\alpha=\frac{1}{a_p+1},
\]
the leading term of $w_{p,\alpha}$ is the classical inverse-square
weight:
\[
w_{p,\alpha}(x)
=
\frac{p-1}{p^2}\frac{1}{x^2}
+
O(x^{-4})
\qquad\text{as }x\to\infty.
\]
In fact,
\[
w_{p,1/(a_p+1)}(x)
>
\frac{p-1}{p^2}\frac{1}{x^2},
\qquad x\in\N.
\]
By Theorem~\ref{thm: critical Hardy weights for Dirichlet Laplacian},
the weight on the left-hand side is critical. Hence, the classical
inverse-square weight is not critical and admits a pointwise improvement
to a critical weight. This extends \eqref{KPP Hardy} from $p=2$ to all
$p>1$.

\end{remark}

\begin{comment}
\begin{remark}[The natural weighted domain]
The appearance of the weight $x^{-1}$ in the $\ell^p$ space comes from the
abstract ground-state representation, see \eqref{abstract reference measure} for definition of $\mu_h$. Indeed, for
\[
g(x):=x^\alpha,
\]
the corresponding reference weight satisfies
\[
\nu_{g}(x)\asymp x^{-1},
\]
whereas the resulting Hardy weight has the asymptotic behaviour
\[
w_{p,\alpha}(x)
  =c_{p,\alpha}x^{-2}+O(x^{-4}),
  \qquad x\to\infty,
\]
with $c_{p,\alpha}>0$ for $0<\alpha<1$. Consequently,
\[
\ell^p(\N)
 \subsetneq \ell^p(\N,x^{-1})
 \subsetneq \ell^p(\N,w_{p,\alpha})
 =\ell^p(\N,x^{-2}),
\]
In the present model of Dirichlet Laplacian on half-line, however, the corresponding Hardy inequality can be extended to every
\[
u\in \ell^p(\N,w_{p,\alpha}).
\]
This extension does not follow directly from the abstract ground-state representation and requires an additional truncation argument.

In Section \ref{sec: Hardy weight reference measure}, we study this issue separately for the Dirichlet Laplacian on the half-line and for the fractional Laplacian on $\Z$. In both cases, suitable truncations allow us to pass from the reference weighted space supplied by the abstract theory to the full natural space determined by the Hardy weight. These examples suggest that such an extension principle may hold in a more general class of discrete models, although it is false in full generality without any additional assumptions; see Remark \ref{rem: reference measure}.
\end{remark}
\end{comment}

\subsection{Fractional Laplacian}\label{subsec: fractional Laplacian}

Next, we consider the case where $X=\Z$ and $k$ is the kernel of the fractional Laplacian. Since $\Delta$ is a bounded nonnegative self-adjoint operator on $\ell^2(\Z)$, for $\sigma\in(0,1)$ its fractional power is defined by the spectral theorem as
\begin{equation}
\Delta^\sigma u(x)
:=
\frac{1}{|\Gamma(-\sigma)|}
\int_0^\infty
\bigl(u(x)-e^{-t\Delta}u(x)\bigr)
\frac{dt}{t^{1+\sigma}}.
\end{equation}
The fractional Laplacian admits the following representation as a nonlocal graph Laplacian:
\begin{equation}\label{graph Laplacian representation of fractional Laplacian}
\Delta^\sigma u(x)
=
\sum_{y\in\Z}\bigl(u(x)-u(y)\bigr)k_\sigma(x-y),
\qquad u\in\ell^2(\Z).
\end{equation}
Here $k_\sigma$ is a non-negative kernel such that $k_\sigma(0)=0$, while, for $x\in\Z\setminus\{0\}$,
$$
k_\sigma(x)
:=
\mathcal{A}_{-\sigma}
\frac{\Gamma(|x|-\sigma)}
{\Gamma(|x|+1+\sigma)},
$$
where
\begin{equation}\label{e.leading-constant-ksigma}
\mathcal{A}_{-\sigma}
:=
\frac{4^\sigma\Gamma(1/2+\sigma)}
{\sqrt{\pi}|\Gamma(-\sigma)|};
\end{equation}
see \cite{MR3882021,MR4612316}.
We note that $k_\sigma$ is not finitely supported, making $\Delta^\sigma$ a nonlocal operator, in stark contrast to the local operator $\Delta$. Moreover, by the standard asymptotics for ratios of gamma functions, as $|z|\to\infty$,
\[
k_\sigma(z)
=
\mathcal{A}_{-\sigma}\frac{1}{|z|^{1+2\sigma}}
+
O\left(\frac{1}{|z|^{2+2\sigma}}\right).
\]
Thus, the kernel has the same order of decay as the kernel of the one-dimensional fractional Laplacian of order $2\sigma$ on $\R$. Using the representation above and the symmetry of $k_\sigma$, we obtain, for $u\in C_c(\Z)$,
\begin{equation}\label{def: 2-form}
\mathcal{E}_2^{\Z,\sigma}(u)
:=
\langle\Delta^\sigma u,u\rangle_{\ell^2(\Z)}
=
\frac{1}{2}
\sum_{x\in\Z}\sum_{y\in\Z}
\bigl(u(x)-u(y)\bigr)^2k_\sigma(x-y).
\end{equation}
This identity may also be viewed as an instance of Green's formula on graphs; see \cite{MR4680228}. The form $\mathcal{E}_2^{\Z,\sigma}$ is the quadratic member of the Sobolev--Bregman family associated with $\Delta^\sigma$.

Ciaurri and Roncal \cite{MR3882021} proved that, for
$0<\sigma<\alpha<1/2$ and $u\in C_c(\Z)$, the following
ground-state representation holds:
\begin{equation}\label{ground state of 2 form}
\mathcal{E}_2^{\Z,\sigma}(u)
=
\sum_{x\in\Z}w_{\sigma,\alpha}(x)|u(x)|^2
+
\frac{1}{2}
\sum_{x\in\Z}\sum_{y\in\Z}
\left(
\frac{u(x)}{k_{-\alpha}(x)}
-
\frac{u(y)}{k_{-\alpha}(y)}
\right)^2
k_{-\alpha}(x)k_{-\alpha}(y)k_\sigma(x-y),
\end{equation}
where the positive weight $w_{\sigma,\alpha}$ is given by
$$
w_{\sigma,\alpha}(x)
:=
\frac{k_{\sigma-\alpha}(x)}{k_{-\alpha}(x)}
=
\frac{c_{\sigma,\alpha}}{|x|^{2\sigma}}
+
O\left(\frac{1}{|x|^{1+2\sigma}}\right)
\qquad\text{as }|x|\to\infty,
$$
with
$$
c_{\sigma,\alpha}
:=
4^\sigma
\frac{
\Gamma(1/2-\alpha+\sigma)\Gamma(\alpha)
}{
\Gamma(1/2-\alpha)\Gamma(\alpha-\sigma)
}.
$$
Dropping the nonnegative remainder term in \eqref{ground state of 2 form} yields a one-parameter family of Hardy inequalities for the form $\mathcal{E}_2^{\Z,\sigma}(u)$, with weights $w_{\sigma,\alpha}$. The constant $c_{\sigma,\alpha}$ is maximized at $\alpha=(1+2\sigma)/4$, and the leading term of $w_{\sigma,(1+2\sigma)/4}$ coincides with the classical Hardy weight for the fractional Laplacian on $\R$ \cite{herbst1977spectral}.

The question of whether any of the weights $w_{\sigma,\alpha}$ are critical was left open in \cite{MR3882021}. It was later proved in \cite{MR4612316} that, for $0<\sigma<\alpha<1/2$, the weight $w_{\sigma,\alpha}$ is critical if and only if
$$
\alpha\leq\frac{1+2\sigma}{4}.
$$
These results were subsequently extended first to the higher-dimensional lattice $\Z^d$ \cite{hake2025optimal} and then to arbitrary infinite graphs \cite{hake2026positive, hake2026supersolution}, yielding explicit critical Hardy weights. For an analogous result for the fractional Laplacian on the discrete half-line, see \cite{DasFuenteFernandez2026, stampach2026optimalfractionaldiscretehardy}. Related optimal Hardy--Rellich--Birman inequalities for powers of the discrete Laplacian were obtained in \cite{StampachWaclawek2026}.

We extend these results to the fractional $\ell^p$ Sobolev--Bregman form
\begin{equation}\label{def: fractional SB form}
\mathcal{E}_p^{\Z,\sigma}(u)
:=
\frac{1}{2}
\sum_{x\in\Z}\sum_{y\in\Z}
\bigl(u(x)-u(y)\bigr)
\bigl(
u(x)^{\langle p-1\rangle}
-
u(y)^{\langle p-1\rangle}
\bigr)
k_\sigma(x-y),
\end{equation}
for $p>1$, and obtain corresponding critical Hardy weights.
\begin{theorem}\label{thm: critical Hardy weights for fractional Laplacian}
Let $p>1$, $\sigma\in(0,1/2)$, and $\alpha\in(\sigma,1/2)$. If
$u\in\ell^p(\Z,(1+|x|)^{-2\sigma})$, then
\begin{equation}\label{fractional p Hardy inequality}
\mathcal{E}_p^{\Z,\sigma}(u)
\geq
\sum_{x\in\Z}w_{p,\sigma,\alpha}(x)|u(x)|^p,
\end{equation}
where $w_{p,\sigma,\alpha}$ is given by
\begin{equation}\label{fractional p Hardy weight}
w_{p,\sigma,\alpha}(x)
:=
\frac{1}{a_p+1}
\frac{\Delta^\sigma k_{-\alpha}^{a_p}(x)}
     {k_{-\alpha}^{a_p}(x)}
+
\frac{a_p}{a_p+1}
\frac{\Delta^\sigma k_{-\alpha}(x)}
     {k_{-\alpha}(x)},
\qquad x\in\Z.
\end{equation}
The Hardy weight $w_{p, \sigma, \alpha}$ is strictly positive and as $|x|\to\infty$,
\begin{equation}\label{fractional p Hardy weight asymptotics}
w_{p,\sigma,\alpha}(x)
=
\frac{\Psi(p,\sigma,\alpha)}{|x|^{2\sigma}}
+
O\left(
\frac{1}{
|x|^{2\sigma+1-\max\{2\sigma,1-2\alpha\}}
}
\right),
\end{equation}
where
\begin{equation}\label{fractional p Hardy leading constant}
\Psi(p,\sigma,\alpha)
:=
\frac{1}{a_p+1}
\frac{\mathcal{A}_{\gamma_p-\sigma}}
     {\mathcal{A}_{\gamma_p}}
+
\frac{a_p}{a_p+1}
\frac{\mathcal{A}_{\alpha-\sigma}}
     {\mathcal{A}_{\alpha}},
\end{equation}
with
\begin{equation}\label{fractional gamma p}
\gamma_p
:=
\frac{1-a_p(1-2\alpha)}{2}.
\end{equation}
Moreover, $w_{p,\sigma,\alpha}$ is critical whenever
\begin{equation}\label{fractional Hardy critical range}
\alpha
\leq
\frac{2\sigma+a_p}{2(a_p+1)}.
\end{equation}
\end{theorem}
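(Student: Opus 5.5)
The plan follows the same scheme as for Theorem~\ref{thm: critical Hardy weights for Dirichlet Laplacian}, with $x^\alpha$ replaced by $h:=k_{-\alpha}$.

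\emph{Abstract Hardy inequality.} We apply the abstract ground-state representation, Theorem~\ref{thm: abstract ground state representation}, with $X=\Z$, kernel $k_\sigma(x-y)$, and $h=k_{-\alpha}$. The weight produced by that theorem is a convex combination of $\Delta^\sigma h^{a_p}/h^{a_p}$ and $\Delta^\sigma h/h$, with coefficients $1/(a_p+1)$ and $a_p/(a_p+1)$. This is exactly \eqref{fractional p Hardy weight}. For the hypotheses of the abstract theorem we need $h$ and $h^{a_p}$ to be positive and superharmonic.
\begin{itemize}
\item For $h$ itself we use the semigroup identity $\Delta^\sigma k_{-\alpha}=k_{\sigma-\alpha}$ of Ciaurri--Roncal. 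Since $\alpha>\sigma$, the kernel $k_{\sigma-\alpha}$ is strictly positive.
\item For $h^{a_p}$ with $a_p\in(0,1]$, we write $k_{-\alpha}^{a_p}=\phi(k_{-\alpha})$ with $\phi(t)=t^{a_p}$ concave. Concavity gives the pointwise bound
\[
\phi(h(x))-\phi(h(y))\ge \phi'(h(x))\bigl(h(x)-h(y)\bigr),
\]
hence
\[
\Delta^\sigma h^{a_p}(x)\ge a_p h(x)^{a_p-1}\Delta^\sigma h(x)>0.
\]
\end{itemize}
Strict positivity of $w_{p,\sigma,\alpha}$ follows. The inequality \eqref{fractional p Hardy inequality} first holds for finitely supported $u$. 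We then extend it to $\ell^p(\Z,(1+|x|)^{-2\sigma})$ by truncation and Fatou's lemma, using the asymptotics below to compare $w_{p,\sigma,\alpha}$ with $(1+|x|)^{-2\sigma}$. Fatou gives lower semicontinuity of $\mathcal E_p^{\Z,\sigma}$, since its summands are nonnegative. On the right-hand side, dominated convergence applies because $w\lesssim(1+|x|)^{-2\sigma}$.

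\emph{Asymptotics.} The second term is $k_{\sigma-\alpha}/k_{-\alpha}$. By gamma-ratio asymptotics it equals $\frac{\mathcal A_{\alpha-\sigma}}{\mathcal A_\alpha}|x|^{-2\sigma}(1+O(|x|^{-1}))$, using $k_{-\alpha}(x)\sim \mathcal A_\alpha|x|^{2\alpha-1}$. For the first term, $h^{a_p}(x)\sim \mathcal A_\alpha^{a_p}|x|^{a_p(2\alpha-1)}=c|x|^{2\gamma_p-1}$. We therefore compare $h^{a_p}$ with $c\,k_{-\gamma_p}$, since $k_{-\gamma_p}(x)\sim\mathcal A_{\gamma_p}|x|^{2\gamma_p-1}$, and write
\[
h^{a_p}=c\,k_{-\gamma_p}+r,\qquad r(x)=O(|x|^{2\gamma_p-2}).
\]
Then $\Delta^\sigma(c\,k_{-\gamma_p})=c\,k_{\sigma-\gamma_p}$ produces the leading constant $\mathcal A_{\gamma_p-\sigma}/\mathcal A_{\gamma_p}$. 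Note $\gamma_p\in(\sigma,1/2)$ requires $a_p(1-2\alpha)<1-2\sigma$; otherwise we treat the term directly.

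It remains to estimate $\Delta^\sigma r(x)$. We split the defining sum into the regions $|y|<|x|/2$, $|y-x|<|x|/2$, and the rest. In the near region we use a second-order Taylor bound on $r$. In the far regions we use the decay of $k_\sigma$. The competing rates $|x|^{-1}$ and $|x|^{-(1-2\alpha)}$ (from the mass near the origin, with $1-2\alpha$ vs.\ $2\sigma$) explain the error exponent in \eqref{fractional p Hardy weight asymptotics}. This is tedious but routine.

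\emph{Criticality.} This is the main obstacle. The plan is to use the ground-state remainder from Theorem~\ref{thm: abstract ground state representation} and build a null sequence $u_n\to$ ground state with bounded (or vanishing) remainder, while $\sum w|u_n|^p$ stays comparable. Equivalently, we show the candidate ground state $h^{a_p/(p-1)}$ or $h$ (whichever matches the $p$-regime) fails to be in $\ell^p(\Z,w)$. By \eqref{equivalence of 2 and p form}, which compares $\mathcal E_p$ with $\mathcal E_2$ evaluated at $u^{\langle p/2\rangle}$, criticality in the $p$-setting reduces to a quadratic null-sequence statement. We would take $u_n=\psi_n\cdot g$, with $\psi_n$ a logarithmic cutoff and $g\sim|x|^{(2\alpha-1)a_p\cdot(\text{suitable power})}$. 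Criticality amounts to showing that $\sum w|g|^p\asymp\sum|x|^{-2\sigma}|g|^p$ diverges (at most logarithmically) while the remainder grows slower.

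The exponent count gives exactly the threshold $\alpha\le(2\sigma+a_p)/(2(a_p+1))$. At equality the sum diverges logarithmically. Below equality the ground state of the $h^{a_p}$-part has the borderline decay and the $h$-part contributes a controlled term. The hardest part is bounding the nonlocal remainder for the cutoff sequence uniformly. For this I would adapt the off-diagonal estimates used for $p=2$ in \cite{MR4612316}, splitting pairs $(x,y)$ by dyadic scales.
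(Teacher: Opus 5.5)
Your construction of the weight and its positivity matches the paper. You take $g=k_{-\alpha}$ in the one-function formulation, so the abstract theorem is applied with $h=k_{-\alpha}^{q_p}$; your tangent-line bound $\Delta^\sigma k_{-\alpha}^{a_p}\ge a_p k_{-\alpha}^{a_p-1}\Delta^\sigma k_{-\alpha}>0$ is a fine substitute for the paper's Jensen argument. Your route to the asymptotics via $k_{-\alpha}^{a_p}=c\,k_{-\gamma_p}+r$ is also the paper's. Two small remarks: $\gamma_p\in(\sigma,1/2)$ holds automatically, since $a_p(1-2\alpha)\le 1-2\alpha<1-2\sigma$; and no Taylor bound on $r$ is needed, because $|\Delta^\sigma r(x)|\le |r(x)|\sum_z k_\sigma(z)+\sum_y |r(y)|k_\sigma(x-y)$ together with the convolution estimate already gives the stated error.

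The first genuine gap is the passage from $C_c(\Z)$ to $\ell^p(\Z,(1+|x|)^{-2\sigma})$. If $u_n\to u$ pointwise, Fatou yields $\mathcal E_p^{\Z,\sigma}(u)\le\liminf_n\mathcal E_p^{\Z,\sigma}(u_n)$, which is the wrong direction. Combined with $\mathcal E_p^{\Z,\sigma}(u_n)\ge\sum_x w_{p,\sigma,\alpha}(x)|u_n(x)|^p\to\sum_x w_{p,\sigma,\alpha}(x)|u(x)|^p$, it says nothing about $\mathcal E_p^{\Z,\sigma}(u)$. You would need cutoffs along which the energies converge. For $u\mathbf 1_{\{|x|\le n\}}$ the nonlocal kernel adds the boundary term $\sum_{|x|\le n}|u(x)|^p\sum_{|y|>n}k_\sigma(x-y)$, whose inner sum is of order one for $|x|$ near $n$. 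The bound $w_{p,\sigma,\alpha}\lesssim(1+|x|)^{-2\sigma}$ is irrelevant to this term, and the radii would have to be chosen carefully (e.g.\ by averaging in $n$). The paper avoids truncation altogether: the ground-state representation holds on $\ell^p(\Z,\mu_{p,\sigma,\alpha})$, and the missing ingredient is the two-sided estimate $\mu_{p,\sigma,\alpha}(x)\asymp(1+|x|)^{-2\sigma}$. Its upper half, $\sum_y|k_{-\alpha}(x)^a-k_{-\alpha}(y)^a|k_\sigma(x-y)\lesssim k_{-\alpha}(x)^a(1+|x|)^{-2\sigma}$, has the absolute value inside the sum, so $\Delta^\sigma k_{-\alpha}=k_{\sigma-\alpha}$ does not give it; the paper proves it by the three-region power-difference estimate. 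Recall also Remarks~\ref{rem: reference measure}(c): membership in $\ell^p(w_h)$ alone does not suffice in general.

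The second, more serious gap is that you replace criticality by null-criticality. With $h=k_{-\alpha}^{q_p}\asymp(1+|x|)^{-q_p(1-2\alpha)}$, $pq_p=a_p+1$, and $w_{p,\sigma,\alpha}\asymp(1+|x|)^{-2\sigma}$, the sum $\sum_x w_{p,\sigma,\alpha}(x)h(x)^p$ diverges iff $2\sigma+(a_p+1)(1-2\alpha)\le1$, i.e.\ iff $\alpha\ge(2\sigma+a_p)/(2(a_p+1))$. That is the opposite side of the threshold. For every $\alpha$ strictly inside the critical range the ground state lies in $\ell^p(\Z,w_{p,\sigma,\alpha})$, yet the weight is critical (already known for $p=2$), so your criterion selects the wrong parameters. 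Likewise, $\mathcal R_{p,\sigma,\alpha}(u_n)=o\bigl(\sum_x w_{p,\sigma,\alpha}|u_n|^p\bigr)$ only excludes $(1+\varepsilon)w_{p,\sigma,\alpha}$, not an increase at a single point.

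What is needed is $u_N\in C_c(\Z)$ with $u_N\to h$ pointwise and $\mathcal R_{p,\sigma,\alpha}(u_N)\to0$. Then $0\le\sum_x(\tilde w-w_{p,\sigma,\alpha})|u_N|^p\le\mathcal R_{p,\sigma,\alpha}(u_N)$ and Fatou force $\tilde w=w_{p,\sigma,\alpha}$. Moreover, \eqref{equivalence of 2 and p form} applied to $\mathcal E_p^{\Z,\sigma}$ itself does not transfer criticality. The paper instead applies Lemma~\ref{lem: from Fp to F2} to the remainder with the ansatz $u_N=h\,e_N^{\langle 2/p\rangle}$, giving $\mathcal R_{p,\sigma,\alpha}(u_N)\le2\mathcal M_{p,\sigma,\alpha}(e_N)$, where $\mathcal M_{p,\sigma,\alpha}$ is the quadratic form with weight $k_{-\alpha}(x)^{r_p}k_{-\alpha}(y)^{q_p}k_\sigma(x-y)$. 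It shows $\mathcal M_{p,\sigma,\alpha}(\xi_N)\lesssim1+N^{-1+2(\alpha+\gamma_p-\sigma)}$ for linear cutoffs, which is bounded exactly in the critical range. It then upgrades boundedness to $\mathcal M_{p,\sigma,\alpha}(e_N)\to0$ via weak compactness in a Hilbert space and Mazur's lemma.

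Your logarithmic cutoff, inserted as $u_N=h\,\psi_N^{2/p}$, can work instead. Splitting into $y\in[x/2,2x]$, $y>2x$, $y<x/2$, one gets $\mathcal M_{p,\sigma,\alpha}(\psi_N)=O(1/\log N)$ in the critical range, which bypasses the Mazur step. But the target must be vanishing of this remainder, not divergence of $\sum_x w_{p,\sigma,\alpha}h^p$.
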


\begin{remarks}
\begin{enumerate}
\item[(a)] For $p=2$, we recover the critical Hardy weights for
$\mathcal{E}_2^{\Z,\sigma}(u)$ from \cite{MR4612316}.

\item[(b)] For fixed $p$ and $\sigma$, all the Hardy weights
$w_{p,\sigma,\alpha}$ have the same order $|x|^{-2\sigma}$ at
infinity. Their leading asymptotic coefficient
$\Psi(p,\sigma,\alpha)$ is maximized at
\[
\alpha
=
\frac{2\sigma+a_p}{2(a_p+1)};
\]
see \cite[Theorem~2]{MR4372148}.

\item[(c)] Unlike in the case of 
$\mathcal{E}_p^{\N_0}$, we do not have a fully explicit asymptotic
expansion of the Hardy weights. In particular, it is not clear
whether the lower-order terms are nonnegative.

\end{enumerate}
\end{remarks}

\begin{remark}
In the examples considered in
Subsections~\ref{subsec: Dirichlet Laplacian}
and~\ref{subsec: fractional Laplacian}, the Hardy inequalities hold on
the full natural weighted spaces determined by the corresponding Hardy
weights. Thus, within the respective weighted $\ell^p$ scales, their
domains are maximal. As shown below, in the more general setting, the maximality can fail.  
\end{remark}

Before closing the introduction, let us briefly outline the structure of the proofs. The argument separates into an abstract part and a model-specific part. First, an elementary algebraic identity for the Bregman divergence yields an exact ground-state representation for Sobolev--Bregman forms and, by dropping its nonnegative remainder, the corresponding Hardy inequality. The resulting Hardy weight is expressed through a positive superharmonic function and its powers.

The model-specific part consists in choosing suitable superharmonic functions and proving the criticality of the resulting weights. For the Dirichlet Laplacian, we use power functions on the half-line, whereas for the fractional Laplacian, we use the kernels $k_{-\alpha}$. Criticality is established by constructing appropriate null sequences for the remainder terms in the ground-state representations. In the fractional case, this requires additional estimates reflecting the nonlocal character of the kernel.

The paper is organized as follows. In Section~2, we introduce the general
framework, establish ground-state representations and Hardy inequalities
for Sobolev--Bregman forms on arbitrary discrete spaces, and explain
their connection with the contractivity of perturbed semigroups.
Section~3 applies these results to the Dirichlet Laplacian on the
half-line. In Section~4, we study the fractional Laplacian on $\Z$,
construct families of Hardy weights, and prove their criticality.

\section{General Setup and Ground-State Representation}\label{sec: abstract framework}

The purpose of this section is to derive Hardy inequalities for
$\mathcal{E}_p^{X,k}$ by means of \emph{ground-state representations}.

\subsection{Kernels and operators}\label{ss:ko}

Let $X$ be a countable set. By a kernel on
$X$ we mean a function
$$
m:X\times X\to\R.
$$
A kernel $m$ is called \emph{nonnegative} if
$$
m(x,y)\geq0,
\qquad x,y\in X,
$$
and \emph{symmetric} if
$$
m(x,y)=m(y,x),
\qquad x,y\in X.
$$

For a nonnegative kernel \(m\), we write
\begin{equation}\label{def: dk}
d_m(x)
:=
\sum_{y\in X}m(x,y),
\qquad x\in X.
\end{equation}
A kernel $m$ is called (row) \emph{summable} if
$$
\sum_{y\in X}|m(x,y)|<\infty,
\qquad x\in X,
$$
that is, $d_{|m|}<\infty$.
It is called \emph{uniformly summable} if
$$
\|m\|_{\mathrm{sum}}
:=
\sup_{x\in X}\sum_{y\in X}|m(x,y)|
<\infty,
$$
that is, $\sup d_{|m|}<\infty$.

Let $\delta$ denote the \emph{identity} kernel,
$$
\delta(x,y)
:=
\begin{cases}
1, & x=y,\\
0, & x\neq y.
\end{cases}
$$
We identify every bounded function $v:X\to\R$ with the \emph{diagonal}
kernel, denoted
$$
(v\delta)(x,y)
:=
v(x)\delta(x,y),
\qquad x,y\in X.
$$
Then
$$
\|v\delta\|_{\mathrm{sum}}
=
\|v\|_\infty.
$$

Every uniformly summable kernel $m$ defines a bounded operator
$T_m$ on $\ell^\infty(X)$ by
$$
T_mu(x)
:=
\sum_{y\in X}m(x,y)u(y),
\qquad x\in X.
$$
If, in addition, $m$ is symmetric, then $T_m$ is bounded on
$\ell^q(X)$ for every $q\in[1,\infty]$, 
\begin{equation}\label{kernel operator estimate}
\|T_m\|_{\ell^q\to\ell^q}
\leq
\|m\|_{\mathrm{sum}}.
\end{equation}

%For every function $v:X\to\R$, let $D_v$ denote the %diagonal kernel
%defined by
%$$
%D_v(x,y):=v(x)\delta(x,y),
%\qquad x,y\in X.
%$$
%The kernel $D_v$ is symmetric. It is uniformly %summable if and only if
%$v\in\ell^\infty(X)$:
%$$
%\|D_v\|_{\mathrm{sum}}=\|v\|_\infty.
%$$

For $v\in\ell^\infty(X)$, let $M_v$ denote the multiplication operator
defined by
$$
M_vu(x):=v(x)u(x),
\qquad x\in X.
$$
Then $M_v=T_{v\delta}$ is bounded on $\ell^q(X)$ for every $q\in[1,\infty]$, and
$$
\|M_v\|_{\ell^q\to\ell^q}=\|v\|_\infty.
$$
Throughout the paper, unless explicitly stated otherwise, we fix a nonnegative
symmetric summable kernel \(k:X\times X\to[0,\infty)\). 
Thus \(d_k(x)<\infty\) for every \(x\in X\). Uniform summability will be imposed only in those results
where boundedness on \(\ell^q\)-spaces is needed.

The Laplacian associated with $k$ is defined, whenever the sum
converges absolutely, by
\begin{equation}\label{def: abstract Laplacian}
L_ku(x)
:=
\sum_{y\in X}
\bigl(u(x)-u(y)\bigr)k(x,y),
\qquad x\in X.
\end{equation}
Equivalently,
$$
L_ku(x)
=
d_k(x)u(x)
-
\sum_{y\in X}k(x,y)u(y).
$$

If $k$ is uniformly summable, then $d_k\in\ell^\infty(X)$ and
$$
\|d_k\|_\infty
=
\|k\|_{\mathrm{sum}}.
$$
In this case, $L_k$ is associated with the symmetric uniformly
summable kernel $d_k\delta-k$:
\begin{equation}\label{Lk as kernel operator}
L_k
=
T_{d_k\delta-k}
=
M_{d_k}-T_k.
\end{equation}
Consequently, $L_k$ is bounded on $\ell^q(X)$ for every
$q\in[1,\infty]$, and
\begin{equation}\label{Lk operator norm estimate}
\|L_k\|_{\ell^q\to\ell^q}
\leq
2\|k\|_{\mathrm{sum}}.
\end{equation}

\subsection{Sobolev--Bregman form}\label{ss:SB}
Let $p\in(1,\infty)$. For $u:X\to\R$, the Sobolev--Bregman form
$\mathcal{E}_p^{X,k}(u)$ is defined by \eqref{def: SB}, with values
in $[0,\infty]$. The first step in understanding the Sobolev--Bregman form is to
observe the following pointwise equivalence valid for all $a,b\in\R$,
\begin{equation}\label{equivalence of 2 and p form}
\frac{4(p-1)}{p^2}
\bigl(b^{\langle p/2\rangle}-a^{\langle p/2\rangle}\bigr)^2
\leq
(b-a)\bigl(b^{\langle p-1\rangle}-a^{\langle p-1\rangle}\bigr)
\leq
2\bigl(b^{\langle p/2\rangle}-a^{\langle p/2\rangle}\bigr)^2,
\end{equation}
see \cite[Lemma 2.1]{MR4885983}, \cite{MR1407327}.
Consequently,
$$
\frac{4(p-1)}{p^2}
\mathcal{E}_2^{X,k}\bigl(u^{\langle p/2\rangle}\bigr)
\leq
\mathcal{E}_p^{X,k}(u)
\leq
2\mathcal{E}_2^{X,k}\bigl(u^{\langle p/2\rangle}\bigr).
$$
This comparison provides a way of constructing Hardy weights for the
nonlinear $p$-form from Hardy weights for the quadratic $2$-form.
However, for $p\neq2$, this procedure does not in general produce
critical Hardy weights for the $p$-form; see, for instance, the discussion of
\cite[(9)]{MR4372148}. A direct analysis of the nonlinear form is therefore
needed.

To this end, we introduce the \emph{Bregman divergence}
\begin{equation}
F_p(a,b)
:=
|b|^p-|a|^p-pa^{\langle p-1\rangle}(b-a),
\qquad a,b\in\R.
\end{equation}
By the convexity of the function $\R\ni x\mapsto|x|^p$, we have
$F_p\geq0$. In particular,
$$
F_2(a,b)=(b-a)^2.
$$
Symmetrizing $F_p$ in $a$ and $b$ gives
\begin{equation}\label{symmetrization of Fp}
\frac{1}{2}\bigl(F_p(a,b)+F_p(b,a)\bigr)
=
\frac{p}{2}
(b-a)\bigl(b^{\langle p-1\rangle}-a^{\langle p-1\rangle}\bigr).
\end{equation}
Consequently, the Sobolev--Bregman form admits the representation
\begin{equation}\label{Bregman representation}
\mathcal{E}_p^{X,k}(u)
=
\frac{1}{p}
\sum_{x\in X}\sum_{y\in X}
F_p\bigl(u(x),u(y)\bigr)k(x,y).
\end{equation}
% This identity has played a crucial role in the analysis of
% Sobolev--Bregman forms, particularly in the derivation of Hardy
% inequalities; see, for instance, \cite{MR4372148}.  
We record two auxiliary
properties of $F_p$ that will be used later.

\begin{lemma}\label{lem: from Fp to F2}
Let $p\in (1,\infty)$. Then, for all $a,b\in\R$,
$$
F_p\left(a^{\langle 2/p\rangle},
         b^{\langle 2/p\rangle}\right)
\leq
2p(a-b)^2.
$$
\end{lemma}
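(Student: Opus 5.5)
The plan is to reduce the claim to the upper bound in \eqref{equivalence of 2 and p form}, using the symmetrization identity \eqref{symmetrization of Fp} together with the nonnegativity of the Bregman divergence. Put $A:=a^{\langle 2/p\rangle}$ and $B:=b^{\langle 2/p\rangle}$. The map $t\mapsto t^{\langle\beta\rangle}$ is an odd bijection of $\R$, and $(t^{\langle\beta\rangle})^{\langle\gamma\rangle}=t^{\langle\beta\gamma\rangle}$. Hence $A^{\langle p/2\rangle}=a$ and $B^{\langle p/2\rangle}=b$.

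First, since $|x|^p$ is convex, $F_p(B,A)\geq 0$. Therefore
\[
F_p(A,B)\leq F_p(A,B)+F_p(B,A)=p\,(B-A)\bigl(B^{\langle p-1\rangle}-A^{\langle p-1\rangle}\bigr),
\]
where the equality is \eqref{symmetrization of Fp} multiplied by $2$.

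Second, apply the right-hand inequality of \eqref{equivalence of 2 and p form} to the pair $(A,B)$:
\[
(B-A)\bigl(B^{\langle p-1\rangle}-A^{\langle p-1\rangle}\bigr)\leq 2\bigl(B^{\langle p/2\rangle}-A^{\langle p/2\rangle}\bigr)^2=2(b-a)^2.
\]
Combining the two displays gives $F_p(A,B)\leq 2p(a-b)^2$, which is the claim.

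There is no real obstacle. The only step needing a moment's care is the identity $B^{\langle p/2\rangle}=b$ for negative $b$, and it holds because both signed powers preserve sign. All the analytic content is already contained in the cited estimate \eqref{equivalence of 2 and p form}.
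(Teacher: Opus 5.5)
Your proposal is correct and is exactly the paper's argument: drop the nonnegative term $F_p(B,A)$, use the symmetrization identity \eqref{symmetrization of Fp}, and apply the upper bound in \eqref{equivalence of 2 and p form} to $A=a^{\langle 2/p\rangle}$, $B=b^{\langle 2/p\rangle}$, using $A^{\langle p/2\rangle}=a$ and $B^{\langle p/2\rangle}=b$. You simply spell out the details that the paper's one-line proof leaves implicit.
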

\begin{proof}
The result follows by \eqref{symmetrization of Fp} and \eqref{equivalence of 2 and p form}.
\end{proof}

The following elementary identity was first observed in
\cite{BogdanDydaSzczukiewicz2026}.
\begin{lemma}\label{lem: Bregman identity}
Let $a_1,a_2\in\R$ and $b_1,b_2\in(0,\infty)$. Then
\begin{align}
F_p(a_1,a_2)
&=
F_p\left(\frac{a_1}{b_1},\frac{a_2}{b_2}\right)
b_1^{p-1}b_2 \notag\\
&\quad
+(p-1)|a_1|^p\frac{b_1-b_2}{b_1}
+|a_2|^p
\frac{b_2^{p-1}-b_1^{p-1}}{b_2^{p-1}}.
\label{Bregman identity}
\end{align}
\end{lemma}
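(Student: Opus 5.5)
The identity is purely algebraic, so the plan is to expand both sides and compare term by term, using only the definition $F_p(a,b)=|b|^p-|a|^p-pa^{\langle p-1\rangle}(b-a)$ and the homogeneity relations
\[
|a/b|^p=|a|^p b^{-p},\qquad (a/b)^{\langle p-1\rangle}=a^{\langle p-1\rangle}b^{1-p}\qquad (b>0).
\]
Set $t_i:=a_i/b_i$ for $i=1,2$.

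First expand the scaled divergence:
\[
F_p(t_1,t_2)\,b_1^{p-1}b_2
=|a_2|^p b_2^{-p}b_1^{p-1}b_2-|a_1|^p b_1^{-p}b_1^{p-1}b_2
-p\,a_1^{\langle p-1\rangle}b_1^{1-p}\Bigl(\frac{a_2}{b_2}-\frac{a_1}{b_1}\Bigr)b_1^{p-1}b_2 .
\]
This simplifies to
\[
|a_2|^p\frac{b_1^{p-1}}{b_2^{p-1}}-|a_1|^p\frac{b_2}{b_1}-p\,a_1^{\langle p-1\rangle}a_2+p|a_1|^p\frac{b_2}{b_1},
\]
where we used $a_1^{\langle p-1\rangle}a_1=|a_1|^p$. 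Combining the two $|a_1|^p$ terms gives
\[
|a_2|^p\frac{b_1^{p-1}}{b_2^{p-1}}+(p-1)|a_1|^p\frac{b_2}{b_1}-p\,a_1^{\langle p-1\rangle}a_2 .
\]

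Next add the two correction terms on the right-hand side of \eqref{Bregman identity}. Since
\[
(p-1)|a_1|^p\frac{b_1-b_2}{b_1}=(p-1)|a_1|^p-(p-1)|a_1|^p\frac{b_2}{b_1},
\]
the $b_2/b_1$ term cancels. Similarly,
\[
|a_2|^p\frac{b_2^{p-1}-b_1^{p-1}}{b_2^{p-1}}=|a_2|^p-|a_2|^p\frac{b_1^{p-1}}{b_2^{p-1}},
\]
so the $b_1^{p-1}/b_2^{p-1}$ term cancels too. What remains is
\[
|a_2|^p+(p-1)|a_1|^p-p\,a_1^{\langle p-1\rangle}a_2 .
\]

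Finally compare this with the left-hand side:
\[
F_p(a_1,a_2)=|a_2|^p-|a_1|^p-p\,a_1^{\langle p-1\rangle}a_2+p|a_1|^p=|a_2|^p+(p-1)|a_1|^p-p\,a_1^{\langle p-1\rangle}a_2 ,
\]
again using $a_1^{\langle p-1\rangle}a_1=|a_1|^p$. The two expressions agree, which proves the identity.

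There is no genuine obstacle here. The only points needing care are the sign-power bookkeeping $(a/b)^{\langle p-1\rangle}=a^{\langle p-1\rangle}b^{1-p}$, which is valid precisely because $b>0$, and the systematic use of $a^{\langle p-1\rangle}a=|a|^p$.
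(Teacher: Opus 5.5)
Your proposal is correct and takes the same approach as the paper, which says only that the identity follows by expanding the right-hand side via the definition of $F_p$ and collecting terms. You carry that expansion out explicitly, using $(a/b)^{\langle p-1\rangle}=a^{\langle p-1\rangle}b^{1-p}$ for $b>0$ and $a^{\langle p-1\rangle}a=|a|^p$, and all of your cancellations check out.
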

\begin{proof}
The identity follows by expanding the right-hand side according to the
definition of $F_p$ and collecting terms.
\end{proof}

For further estimates of $F_p$ and its applications in analysis, statistical
learning, and probability theory we refer to
\cite{MR4589708,MR4851904,bogdan2024bregmanvariationsemimartingales}.

\subsection{Ground state representations and Hardy inequalities}
Broadly speaking, a ``ground state representation'' is an identity that decomposes an energy form into a weighted term and a nonnegative remainder.
The idea goes back to classical linear
Schr\"odinger operators in the continuum and has found applications in,
among other areas, spectral theory, mathematical physics, and
Agmon--Allegretto--Piepenbrink theory
\cite{davies1989heat,pinchover2007topics,Birman1966Spectrum}.
Since then, ground state representations in the continuum have been
obtained in various settings; see, for instance, \cite{MR2469027}.
In the discrete setting, such representations already appear for
Jacobi matrices in the linear case \cite{MR2415477}, and were subsequently
obtained for linear Schrödinger operators on graphs \cite{MR3774437} and
quasilinear Schrödinger operators \cite{MR4768506}.

The main result of this section is a ground-state representation for the
discrete Sobolev--Bregman form \eqref{def: SB}. To state it, we introduce
some notation. 

Throughout this subsection, $\Omega\subseteq X$ denotes the region on which
the ground-state representation and the corresponding Hardy inequality are
considered. Dirichlet boundary conditions outside $\Omega$ will be encoded by
identifying functions on $\Omega$ with their extensions by zero to
$X\setminus\Omega$. We denote by $C_c(\Omega)$ the space of finitely
supported functions on $\Omega$, understood according to this convention.

Recall that the Laplacian $L_k$ associated with $k$ is
defined in \eqref{def: abstract Laplacian}. It is well defined and finite pointwise
on the space
\begin{equation}\label{def: domain of the formal Laplacian}
    \mathcal{F}_k(X) :=   \left\{ u: X\to\R: \sum_{y\in X}k(x,y)|u(y)|<\infty \text{ for every }x\in X \right\}.
\end{equation}
The summability of $k$ implies that
$\ell^\infty(X)\subseteq\mathcal{F}_k(X)$. Since $\ell^p(X)\subseteq\ell^\infty(X)$,

$$
\ell^p(X)\subseteq\mathcal{F}_k(X).
$$

A function $u\in\mathcal{F}_k(X)$ is called
\emph{harmonic} on $\Omega$ if
$$
L_ku(x)=0,
\qquad x\in\Omega,
$$
\emph{superharmonic} on $\Omega$ if
$$
L_ku(x)\geq0,
\qquad x\in\Omega,
$$
%and
%A function $u\in\mathcal{F}(X)$ is called
%\emph{harmonic} on $\Omega$ if
%$$
%L_ku(x)=0,
%\qquad x\in\Omega, $$ 
and \emph{strictly superharmonic} on $\Omega$ if $$L_ku(x)>0,\qquad x\in\Omega.$$

\begin{definition}
Let $\Omega\subseteq X$. A \emph{Hardy weight} for
$\mathcal{E}_p^{X,k}$ on $\Omega$ is a nonnegative function
$w:\Omega\to[0,\infty)$ such that
$$
\mathcal{E}_p^{X,k}(u)
\geq
\sum_{x\in\Omega}w(x)|u(x)|^p,
\qquad u\in C_c(\Omega).
$$
A Hardy weight $w$ is called \emph{critical} if every Hardy weight
$\widetilde w$ on $\Omega$ satisfying
$$
\widetilde w(x)\geq w(x),
\qquad x\in\Omega,
$$
coincides with $w$.
\end{definition}

Let $\Omega \subseteq X$, and $\mu : \Omega \rightarrow [0, \infty)$ be a non-negative weight, we define
$$
\ell^p(\Omega, \mu) := \Big\{u : X \rightarrow \R : \operatorname{supp} u  \subseteq \Omega\, \operatorname{and} \sum_{x \in \Omega} |u(x)|^p \mu(x) < \infty  \Big\}.
$$
Thus, elements of $\ell^p(\Omega,\mu)$ may equivalently be regarded as
functions on $\Omega$, extended by zero to $X\setminus\Omega$ according to
the convention above.

For a function $h:X\to[0,\infty)$ such that $h>0$ on $\Omega$,
$h=0$ on $X\setminus\Omega$, and
$h,h^{p-1}\in\mathcal F_k(X)$, define the weight
\begin{equation}\label{abstract measure}
\begin{aligned}
\mu_h(x)
&:=
\frac{1}{p'}
\frac{1}{h(x)}
\sum_{y\in X}|h(x)-h(y)|k(x,y)\\
&\quad
+
\frac{1}{p}
\frac{1}{h(x)^{p-1}}
\sum_{y\in X}
|h(x)^{p-1}-h(y)^{p-1}|k(x,y),
\qquad x\in\Omega.
\end{aligned}
\end{equation}
By the definition, the weight is finite and nonnegative. It specifies the class of admissible functions in the
ground-state representation below.
\begin{theorem}[Ground-state representation]
\label{thm: abstract ground state representation}
Let $p>1$ and $\Omega\subseteq X$. Let $h:X\to[0,\infty)$ satisfy
$h>0$ on $\Omega$ and $h=0$ on $X\setminus\Omega$. Suppose that
$h,h^{p-1}\in\mathcal F_k(X)$ and that both functions are superharmonic
on $\Omega$. Then, for every $u\in\ell^p(\Omega,\mu_h)$,
\begin{align}
\mathcal E_p^{X,k}(u)
&=
\sum_{x\in\Omega}w_h(x)|u(x)|^p \notag\\
&\quad
+
\frac{1}{p}
\sum_{x,y\in\Omega}
F_p\left(
\frac{u(x)}{h(x)},
\frac{u(y)}{h(y)}
\right)
h(x)^{p-1}h(y)k(x,y),
\label{abstract ground state representation}
\end{align}
where
\begin{equation}\label{abstract Hardy weight}
w_h(x)
:=
\frac{1}{p'}
\frac{L_kh(x)}{h(x)}
+
\frac{1}{p}
\frac{L_kh^{p-1}(x)}{h(x)^{p-1}},
\qquad x\in\Omega.
\end{equation}
Here and below, identities and inequalities between nonnegative quantities are understood in
\([0,\infty]\).
Moreover, $w_h$ is a Hardy weight for
$\mathcal E_p^{X,k}$ on $\Omega$ and
\begin{equation}\label{abstract Hardy inequality}
\mathcal E_p^{X,k}(u)
\geq
\sum_{x\in\Omega}w_h(x)|u(x)|^p,
\qquad u\in\ell^p(\Omega,\mu_h).
\end{equation}
\end{theorem}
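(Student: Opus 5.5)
The plan is to apply Lemma~\ref{lem: Bregman identity} pairwise and then sum against the kernel. Using the Bregman representation \eqref{Bregman representation},
\[
\mathcal E_p^{X,k}(u)=\frac1p\sum_{x,y\in X}F_p\bigl(u(x),u(y)\bigr)k(x,y).
\]
We split the double sum according to whether $x,y$ lie in $\Omega$. If $x,y\notin\Omega$, then $u(x)=u(y)=0$ and the term vanishes.

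\textbf{Terms with both points in $\Omega$.} For $x,y\in\Omega$ we apply \eqref{Bregman identity} with $a_1=u(x)$, $a_2=u(y)$, $b_1=h(x)$, $b_2=h(y)$. This gives the remainder term $F_p(u(x)/h(x),u(y)/h(y))\,h(x)^{p-1}h(y)$ plus two "linear" terms:
\[
(p-1)|u(x)|^p\,\frac{h(x)-h(y)}{h(x)}
\quad\text{and}\quad
|u(y)|^p\,\frac{h(y)^{p-1}-h(x)^{p-1}}{h(y)^{p-1}}.
\]

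\textbf{Terms with one point outside $\Omega$.} If $x\in\Omega$ and $y\notin\Omega$, then $u(y)=0$, and $F_p(u(x),0)=(p-1)|u(x)|^p$. Since $h(y)=0$, this is exactly $(p-1)|u(x)|^p\,(h(x)-h(y))/h(x)$, and the remainder term contributes zero because it carries the factor $h(y)=0$.

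If $x\notin\Omega$ and $y\in\Omega$, then $F_p(0,u(y))=|u(y)|^p$, which again equals $|u(y)|^p\,(h(y)^{p-1}-h(x)^{p-1})/h(y)^{p-1}$ because $h(x)=0$. So the pointwise identity extends formally to all pairs $(x,y)$, with the remainder supported on $\Omega\times\Omega$.

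\textbf{Summing the linear terms.} We sum the linear terms over $y$ first for fixed $x$ (for the first term), and over $x$ for fixed $y$ (for the second), using the symmetry of $k$. The first term yields
\[
(p-1)|u(x)|^p\,\frac{L_kh(x)}{h(x)},
\]
and the second yields
\[
|u(y)|^p\,\frac{L_kh^{p-1}(y)}{h(y)^{p-1}}.
\]
After dividing by $p$ and noting $(p-1)/p=1/p'$, we obtain exactly $\sum_{x\in\Omega} w_h(x)|u(x)|^p$, which gives \eqref{abstract ground state representation}.

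\textbf{Main obstacle: justifying the rearrangement.} The linear terms have indefinite sign, so we cannot freely split the nonnegative series $\sum F_p\,k$ into three sums and reorder them. This is where the weight $\mu_h$ enters. The absolute series of the linear parts is bounded by
\[
\sum_{x}|u(x)|^p\Bigl[(p-1)\frac{1}{h(x)}\sum_y|h(x)-h(y)|k(x,y)+\frac{1}{h(x)^{p-1}}\sum_y|h(x)^{p-1}-h(y)^{p-1}|k(x,y)\Bigr]=p\sum_{x\in\Omega}\mu_h(x)|u(x)|^p<\infty,
\]
using \eqref{abstract measure}. Hence, for $u\in\ell^p(\Omega,\mu_h)$ the linear parts form an absolutely convergent double series, and Fubini/Tonelli permits summing in either order.

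The remaining pieces are then handled as follows. The full series $\sum_{x,y}F_p\,k$ and the remainder series are both sums of nonnegative terms, with values in $[0,\infty]$. Writing each nonnegative term as the remainder plus an absolutely summable linear part, we get the identity in $[0,\infty]$: the left side is infinite if and only if the remainder is.

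\textbf{The Hardy inequality.} Superharmonicity of $h$ and $h^{p-1}$ on $\Omega$ gives $w_h\ge0$, and dropping the nonnegative remainder yields \eqref{abstract Hardy inequality}. Finally, $C_c(\Omega)\subseteq\ell^p(\Omega,\mu_h)$ because $\mu_h$ is finite pointwise (this uses $h,h^{p-1}\in\mathcal F_k(X)$ together with $d_k<\infty$). Therefore $w_h$ is a Hardy weight for $\mathcal E_p^{X,k}$ on $\Omega$.
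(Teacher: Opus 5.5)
Your proposal is correct and follows essentially the same route as the paper: you apply Lemma~\ref{lem: Bregman identity} on $\Omega\times\Omega$, treat the pairs with one point outside $\Omega$ using $u=h=0$ there, and use $u\in\ell^p(\Omega,\mu_h)$ to make the two linear parts absolutely summable, so that Fubini and the symmetry of $k$ produce $w_h$. The differences are cosmetic: you fold the boundary pairs into a formally extended pointwise identity instead of first collecting them into the paper's term $I_2(u)$, and you justify more explicitly why the identity holds in $[0,\infty]$ when the remainder is infinite.
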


\begin{proof}
Since $u=0$ on $X\setminus\Omega$, by \eqref{Bregman representation} we decompose the form as
\begin{equation}\label{decomposition into omega and its complement}
\mathcal E_p^{X,k}(u)=I_1(u)+I_2(u),
\end{equation}
where
$$
I_1(u)
:=
\frac{1}{p}
\sum_{x,y\in\Omega}
F_p\bigl(u(x),u(y)\bigr)k(x,y).
$$
and
$$
I_2(u)
:=
\sum_{x\in\Omega}
\sum_{y\in X\setminus\Omega}
|u(x)|^p k(x,y).
$$
We now apply \eqref{Bregman identity} with
$$
a_1=u(x),\qquad a_2=u(y),\qquad
b_1=h(x),\qquad b_2=h(y).
$$
This gives
\begin{equation}\label{decomposition of the interior term}
I_1(u)
=
\frac{1}{p}
\sum_{x,y\in\Omega}
\bigl(f_1(x,y)+f_2(x,y)+f_3(x,y)\bigr),
\end{equation}
where
$$
f_1(x,y)
:=
(p-1)|u(x)|^p
\frac{h(x)-h(y)}{h(x)}
k(x,y),
$$
$$
f_2(x,y)
:=
|u(y)|^p
\frac{h(y)^{p-1}-h(x)^{p-1}}
     {h(y)^{p-1}}
k(x,y),
$$
and
$$
f_3(x,y)
:=
F_p\left(
\frac{u(x)}{h(x)},
\frac{u(y)}{h(y)}
\right)
h(x)^{p-1}h(y)k(x,y).
$$

The assumption $u\in\ell^p(\Omega,\mu_h)$ implies that
$f_1,f_2\in\ell^1(\Omega\times\Omega)$. Hence, by Fubini's
theorem and the symmetry of $k$,
\begin{align}
I_1(u)
&=
\sum_{x\in\Omega}|u(x)|^p
\biggl[
\frac{1}{p'}
\frac{1}{h(x)}
\sum_{y\in\Omega}
\bigl(h(x)-h(y)\bigr)k(x,y)
\notag\\
&\hspace{42mm}
+
\frac{1}{p}
\frac{1}{h(x)^{p-1}}
\sum_{y\in\Omega}
\bigl(h(x)^{p-1}-h(y)^{p-1}\bigr)k(x,y)
\biggr]
\notag\\
&\quad
+
\frac{1}{p}
\sum_{x,y\in\Omega}
F_p\left(
\frac{u(x)}{h(x)},
\frac{u(y)}{h(y)}
\right)
h(x)^{p-1}h(y)k(x,y).
\label{expressing omega term}
\end{align}
The last sum is well defined in $[0,\infty]$ since
$F_p\geq0$. Since $h=h^{p-1}=0$ on $X\setminus\Omega$, 
\begin{align}
I_2(u)
&=
\sum_{x\in\Omega}|u(x)|^p
\sum_{y\in X\setminus\Omega}k(x,y)
\notag\\
&=
\frac{1}{p'}
\sum_{x\in\Omega}
\frac{|u(x)|^p}{h(x)}
\sum_{y\in X\setminus\Omega}
\bigl(h(x)-h(y)\bigr)k(x,y)
\notag\\
&\quad
+
\frac{1}{p}
\sum_{x\in\Omega}
\frac{|u(x)|^p}{h(x)^{p-1}}
\sum_{y\in X\setminus\Omega}
\bigl(h(x)^{p-1}-h(y)^{p-1}\bigr)k(x,y).
\label{expressing boundary term}
\end{align}

Combining \eqref{expressing omega term} and
\eqref{expressing boundary term} with
\eqref{decomposition into omega and its complement}, and using
the definition of $L_k$, yields
\begin{align*}
\mathcal E_p^{X,k}(u)
&=
\sum_{x\in\Omega}
\left(
\frac{1}{p'}\frac{L_kh(x)}{h(x)}
+
\frac{1}{p}
\frac{L_kh^{p-1}(x)}{h(x)^{p-1}}
\right)
|u(x)|^p\\
&\quad
+
\frac{1}{p}
\sum_{x,y\in\Omega}
F_p\left(
\frac{u(x)}{h(x)},
\frac{u(y)}{h(y)}
\right)
h(x)^{p-1}h(y)k(x,y),
\end{align*}
which is \eqref{abstract ground state representation}.

Since $F_p\geq0$, the representation implies
\eqref{abstract Hardy inequality}. Moreover, the superharmonicity
of $h$ and $h^{p-1}$ shows that $w_h\geq0$ on $\Omega$.
Finally, $\mu_h$ is finite at every point of $\Omega$, and hence
$C_c(\Omega)\subseteq\ell^p(\Omega,\mu_h)$. Therefore, $w_h$ is
a Hardy weight for $\mathcal E_p^{X,k}$ on $\Omega$.
\end{proof}

\begin{remarks}\label{rem: reference measure}
\begin{enumerate}

\item[(a)]
For $p=2$, we recover the classical ground-state representation and
the associated Hardy inequality for the $\ell^2$-form
$\mathcal E_2^{X,k}$. In particular, the corresponding Hardy weight
reduces to the familiar expression \cite{MR3774437}:
$$
w_h=\frac{L_kh}{h}.
$$

\item[(b)]
The proof of the ground-state representation is quite flexible, as it
relies on the algebraic identity \eqref{Bregman identity}. It can
therefore be adapted to other settings in which suitable summability properties are available.

\item[(c)]
The weight $\mu_h$ is closely related to the Hardy weight $w_h$.
Indeed, the definition of $L_k$ and the triangle inequality give
$$
w_h(x)\leq\mu_h(x),
\qquad x\in\Omega.
$$
It is therefore natural to ask whether the assumption
$u\in\ell^p(\Omega,\mu_h)$ in the ground-state representation can be
weakened to $u\in\ell^p(\Omega,w_h)$. In general, this is not possible.

To see this, let $p=2$, and let $k$ be a nearest-neighbor kernel which induces on $X$ a
transient graph structure. Set $h=G_o$, where $G_o$ is a positive Green
function of $L_k$ with pole at $o\in X$. Since
$$
L_kG_o=\delta_o,
$$
the corresponding Hardy weight is
$$
w_h(x)
=
\frac{\delta_o(x)}{G_o(x)}
=
\frac{\delta_o(x)}{G_o(o)}.
$$
Thus, $w_h$ is supported at $o$, and the constant function
$u\equiv1$ belongs to $\ell^2(X,w_h)$. If membership in this space
were sufficient for the Hardy inequality, we would obtain
$$
0
=
\mathcal E_2^{X,k}(1)
\geq
\sum_{x\in X}w_h(x)
=
\frac{1}{G_o(o)}
>0,
$$
which is impossible. Hence, membership in
$\ell^p(\Omega,w_h)$ alone is not sufficient for either the
ground-state representation or the Hardy inequality
\eqref{abstract Hardy inequality}.

\item[(d)]
Since $w_h\leq\mu_h$, one always has
$$
\ell^p(\Omega,\mu_h)
\subseteq
\ell^p(\Omega,w_h).
$$
The preceding example shows that this inclusion may be strict. Thus,
the admissible domain furnished by the abstract ground-state
representation may be strictly smaller than the full weighted space
determined by the corresponding Hardy weight. The maximal-domain
property established in Subsections~\ref{subsec: Dirichlet Laplacian}
and \ref{subsec: fractional Laplacian} is therefore a special feature
of those model cases and does not follow from the abstract theory
alone.

\item[(e)]
In many settings of interest, the form $\mathcal E_p^{X,k}$ admits no nonzero Hardy
weight---we then say it is not \emph{subcritical}. Subcriticality can often be restored by imposing additional
vanishing conditions on the admissible functions, for example by
requiring them to be supported in a proper subset
$\Omega\subsetneq X$. The formulation above accommodates the
resulting Dirichlet boundary conditions. In
Section~\ref{sec: Dirichlet Laplacian}, we see how this phenomenon arises
for the Dirichlet Laplacian on the discrete half-line.
\end{enumerate}
\end{remarks}

\begin{proposition}\label{prop: abstract ground state representation for lp}
Let $h$ be as in Theorem~\ref{thm: abstract ground state representation}. If  
$$
K
:=
\sup_{x\in\Omega}\sum_{y\in X}k(x,y)
<\infty,
$$
then
$$
\mu_h(x)\leq 2K,
\qquad x\in\Omega.
$$
In particular,
$
\ell^p(\Omega)
\subseteq
\ell^p(\Omega,\mu_h)
$ and the ground-state representation
\eqref{abstract ground state representation}
holds for all $u\in\ell^p(\Omega)$.
\end{proposition}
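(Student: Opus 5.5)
The bound on $\mu_h$ follows from the superharmonicity of $h$ and of $h^{p-1}$, which lets us remove the absolute values in \eqref{abstract measure} at the cost of a factor $2$. Fix $x\in\Omega$ and write $g$ for either $h$ or $h^{p-1}$. Both functions are strictly positive at $x$, nonnegative everywhere, belong to $\mathcal F_k(X)$, and satisfy $L_kg(x)\geq0$. The key observation is the elementary identity
$$
|g(x)-g(y)| = \bigl(g(x)-g(y)\bigr) + 2\bigl(g(y)-g(x)\bigr)_+ .
$$
Summing against $k(x,y)$ gives
$$
\sum_{y\in X}|g(x)-g(y)|k(x,y) = L_kg(x) + 2\sum_{y\in X}\bigl(g(y)-g(x)\bigr)_+k(x,y).
$$
Every term is finite because $g\in\mathcal F_k(X)$ and $d_k(x)<\infty$.

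The second sum is harmless. Since $(g(y)-g(x))_+\leq g(y)$, we could bound it by $\sum_y g(y)k(x,y)$, but it is better to avoid that. Instead we rewrite $L_kg(x)=d_k(x)g(x)-\sum_y k(x,y)g(y)$, and use $g\geq0$ to get $L_kg(x)\leq d_k(x)g(x)$. For the positive-part sum, note that $\sum_y (g(y)-g(x))_+k(x,y)\leq \sum_y (g(y)-g(x))k(x,y)+\sum_y (g(x)-g(y))_+k(x,y)$. The first term equals $-L_kg(x)\leq0$ by superharmonicity. The second is at most $g(x)d_k(x)$, because $g(y)\geq0$.

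Combining these,
$$
\sum_y|g(x)-g(y)|k(x,y)\leq d_k(x)g(x)+2d_k(x)g(x).
$$
This gives a constant $3K$, which is too crude. The cleaner route is to write $|g(x)-g(y)|=2(g(x)-g(y))_+-(g(x)-g(y))$. Summing against $k(x,y)$ yields
$$
\sum_y|g(x)-g(y)|k(x,y)=2\sum_y(g(x)-g(y))_+k(x,y)-L_kg(x)\leq 2g(x)d_k(x),
$$
using $L_kg(x)\geq0$ and $(g(x)-g(y))_+\leq g(x)$, the latter because $g(y)\geq0$. Dividing by $g(x)>0$ gives $\frac{1}{g(x)}\sum_y|g(x)-g(y)|k(x,y)\leq 2d_k(x)\leq 2K$.

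Applying this to $g=h$ and to $g=h^{p-1}$, and using $\frac1{p'}+\frac1p=1$ in \eqref{abstract measure}, we obtain $\mu_h(x)\leq 2K$ for all $x\in\Omega$. Consequently $\sum_x|u(x)|^p\mu_h(x)\leq 2K\|u\|_p^p$, so $\ell^p(\Omega)\subseteq\ell^p(\Omega,\mu_h)$. The ground-state representation then follows from Theorem~\ref{thm: abstract ground state representation}.

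There is no real obstacle here. The only point needing care is to use the superharmonicity and positivity in the right combination, namely by writing the absolute value via the positive part $(g(x)-g(y))_+$ rather than $(g(y)-g(x))_+$, so that the constant comes out as exactly $2K$.
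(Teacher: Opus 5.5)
Your argument is correct and is essentially the paper's. Bounding $(g(x)-g(y))_+\le g(x)$ in your identity gives $\sum_y|g(x)-g(y)|k(x,y)\le g(x)d_k(x)+\sum_y g(y)k(x,y)$, which is exactly the paper's triangle-inequality bound, and your use of $L_kg(x)\ge 0$ is the same as the paper's step $\sum_y g(y)k(x,y)\le g(x)d_k(x)\le Kg(x)$. The exploratory detour yielding the constant $3K$ is unnecessary and can be dropped.
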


\begin{proof}
Let $\varphi$ denote either $h$ or $h^{p-1}$. Since $\varphi$ is nonnegative and superharmonic on $\Omega$, for every $x\in\Omega$,
$$
\sum_{y\in X}\varphi(y)k(x,y)
\leq
\varphi(x)\sum_{y\in X}k(x,y)
\leq
K\varphi(x).
$$
It follows that
\begin{align*}
\frac{1}{\varphi(x)}
\sum_{y\in X}
|\varphi(x)-\varphi(y)|k(x,y)
&\leq
\sum_{y\in X}k(x,y)
+
\frac{1}{\varphi(x)}
\sum_{y\in X}\varphi(y)k(x,y)\leq 2K.
\end{align*}
Applying this estimate to both $h$ and $h^{p-1}$ and using
$\frac{1}{p}+\frac{1}{p'}=1$, we obtain
$\mu_h(x)\leq 2K$. The remaining assertions follow immediately.
\end{proof}

For $p=2$, we have $h^{p-1}=h$, so a single positive superharmonic
function $h$ suffices to produce the Hardy weight $w_h=L_kh/h\geq 0$.
For general $p$, the situation seems more involved because both $h$ and
$h^{p-1}$ are required to be superharmonic. However, the ground-state representation has the following equivalent
one-function formulation. Let
\[
    \nu_g:=\mu_{g^{q_p}}.
\]
Equivalently,
\begin{align}
    \nu_g(x)
    :={}&
    \frac{a_p}{a_p+1}
    \frac{1}{g(x)}
    \sum_{y\in X}
    |g(x)-g(y)|k(x,y)
    \nonumber\\
    &+
    \frac{1}{a_p+1}
    \frac{1}{g(x)^{a_p}}
    \sum_{y\in X}
    \left|g(x)^{a_p}-g(y)^{a_p}\right|k(x,y),
    \qquad x\in X.
    \label{abstract one-function reference measure}
\end{align}

\begin{proposition}\label{prop: abstract ground state representation: one-function formulation}
Let $p>1$ and $\Omega\subseteq X$. Let $g\in\mathcal F_k(X)$ be
nonnegative, positive on $\Omega$, zero on $X\setminus\Omega$, and
superharmonic on $\Omega$. Define
$$
h(x)
:=
\begin{cases}
g(x), & 1<p\leq2,\\[2mm]
g(x)^{1/(p-1)}, & p>2.
\end{cases}
$$
Then $h$ and $h^{p-1}$ belong to $\mathcal F_k(X)$ and are
superharmonic on $\Omega$. For all
$u\in\ell^p(\Omega,\mu_h)$,
\begin{align}
\mathcal E_p^{X,k}(u)
&=
\sum_{x\in\Omega}w_h(x)|u(x)|^p
\notag\\
&\quad+
\frac{1}{p}
\sum_{x,y\in\Omega}
F_p\left(
\frac{u(x)}{h(x)},
\frac{u(y)}{h(y)}
\right)
h(x)^{p-1}h(y)k(x,y),
\label{abstract ground state representation: one-function formulation}
\end{align}
and
$$
\mathcal E_p^{X,k}(u)
\geq
\sum_{x\in\Omega}w_h(x)|u(x)|^p,
$$
where $w_h$ and $\mu_h$ are given by
\eqref{abstract Hardy weight} and \eqref{abstract measure}, respectively.
\end{proposition}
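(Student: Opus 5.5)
The plan is to reduce the statement to Theorem~\ref{thm: abstract ground state representation}: once $h$ and $h^{p-1}$ are shown to lie in $\mathcal F_k(X)$ and to be superharmonic on $\Omega$, the identity \eqref{abstract ground state representation: one-function formulation} and the Hardy inequality are exactly \eqref{abstract ground state representation} and \eqref{abstract Hardy inequality}. Note that $h>0$ on $\Omega$ and $h=0$ off $\Omega$ hold automatically in both cases. So the real work is the claim about $h$ and $h^{p-1}$.

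In both cases the two functions are $g$ and $g^{\beta}$ with $\beta=a_p\in(0,1]$. Indeed, for $1<p\le2$ we have $h=g$ and $h^{p-1}=g^{p-1}$, with $p-1=a_p$. For $p>2$ we have $h^{p-1}=g$ and $h=g^{1/(p-1)}$, with $1/(p-1)=p'-1=a_p$. Since $g$ is assumed to be in $\mathcal F_k(X)$ and superharmonic, it remains to check the following: if $g\ge0$, $g\in\mathcal F_k(X)$, and $g$ is superharmonic on $\Omega$, then for $\beta\in(0,1]$ the function $g^\beta$ also lies in $\mathcal F_k(X)$ and is superharmonic on $\Omega$.

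For membership in $\mathcal F_k(X)$, I would use $t^\beta\le 1+t$ for $t\ge0$. This gives $\sum_y k(x,y)g(y)^\beta\le d_k(x)+\sum_y k(x,y)g(y)<\infty$, by the summability of $k$ and $g\in\mathcal F_k(X)$.

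For superharmonicity, fix $x\in\Omega$, so $g(x)>0$. Concavity of $t\mapsto t^\beta$ gives the tangent-line bound $g(y)^\beta\le g(x)^\beta+\beta g(x)^{\beta-1}(g(y)-g(x))$. Rearranged, this reads $g(x)^\beta-g(y)^\beta\ge\beta g(x)^{\beta-1}(g(x)-g(y))$. Multiplying by $k(x,y)$ and summing is legitimate, since both series converge absolutely by the previous step. This yields $L_k g^\beta(x)\ge\beta g(x)^{\beta-1}L_kg(x)\ge0$.

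The only mildly delicate point is this absolute convergence needed to sum the pointwise inequality, and it is handled by the membership in $\mathcal F_k(X)$ just established. The conclusion then follows directly from Theorem~\ref{thm: abstract ground state representation}, with $u\in\ell^p(\Omega,\mu_h)$ as stated.
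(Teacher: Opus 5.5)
Your proof is correct and follows the paper's argument. Like the paper, you identify $\{h,h^{p-1}\}=\{g,g^{a_p}\}$, get $g^{a_p}\in\mathcal F_k(X)$ from $t^{a_p}\le 1+t$, prove superharmonicity of $g^{a_p}$ by concavity, and then invoke Theorem~\ref{thm: abstract ground state representation}. The only local difference is that you use the tangent-line inequality where the paper applies Jensen's inequality to the probability weights $k(x,\cdot)/d_k(x)$; this spares you the separate case $d_k(x)=0$ and gives the slightly stronger bound $L_kg^{a_p}(x)\ge a_p\,g(x)^{a_p-1}L_kg(x)$.
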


\begin{proof}
We have
$$
\{h,h^{p-1}\}=\{g,g^{a_p}\}.
$$
It remains to justify the superharmonicity of \(g^{a_p}\).
Since \(0<a_p\leq1\), we have \(g^{a_p}\leq 1+g\), and hence
\(g^{a_p}\in\mathcal F_k(X)\). If
\(d_k(x)=0\), then \(L_kg^{a_p}(x)=0\). If \(d_k(x)>0\), then the
superharmonicity of \(g\) gives
\[
    \frac{1}{d_k(x)}\sum_{y\in X}g(y)k(x,y)
    \leq
    g(x).
\]
Since \(t\mapsto t^{a_p}\) is concave on \([0,\infty)\), Jensen's inequality
yields
\[
    \frac{1}{d_k(x)}\sum_{y\in X}g(y)^{a_p}k(x,y)
    \leq
    \left(
        \frac{1}{d_k(x)}\sum_{y\in X}g(y)k(x,y)
    \right)^{a_p}
    \leq
    g(x)^{a_p}.
\]
Thus \(L_kg^{a_p}(x)\geq0\).

Hence both $h$ and $h^{p-1}$ satisfy the assumptions of
Theorem~\ref{thm: abstract ground state representation}, and the
assertions follow.
\end{proof}
\begin{remark}\label{rem: ground-state in terms of constants a_p}
The Hardy weight, the weight $\mu_h$, and the remainder term in the
proposition admit convenient expressions in terms of
$$
a_p=(p-1)\wedge(p'-1).
$$
Indeed, let
$$
q_p:=1\wedge\frac{1}{p-1},
\qquad
r_p:=1\wedge(p-1).
$$
Then
$$
h=g^{q_p},
\qquad
h^{p-1}=g^{r_p},
$$
and
$$
\{q_p,r_p\}=\{1,a_p\},
\qquad
q_p+r_p=1+a_p.
$$
Moreover,
$$
\frac{1}{p}=\frac{q_p}{q_p+r_p},
\qquad
\frac{1}{p'}=\frac{r_p}{q_p+r_p}.
$$
Substituting these identities into the definition of $w_h$, we obtain
$$
w_h
=
\frac{1}{a_p+1}
\frac{L_k g^{a_p}}{g^{a_p}}
+
\frac{a_p}{a_p+1}
\frac{L_k g}{g}.
$$
Similarly,
\begin{align*}
\mu_h(x)
={}&
\frac{1}{a_p+1}
\frac{1}{g(x)^{a_p}}
\sum_{y\in X}
\left|g(x)^{a_p}-g(y)^{a_p}\right|k(x,y)
\\
&+
\frac{a_p}{a_p+1}
\frac{1}{g(x)}
\sum_{y\in X}
|g(x)-g(y)|k(x,y).
\end{align*}
The remainder term in the ground-state representation becomes
$$
\frac{1}{p}
\sum_{x,y\in\Omega}
F_p\left(
\frac{u(x)}{g(x)^{q_p}},
\frac{u(y)}{g(y)^{q_p}}
\right)
g(x)^{r_p}g(y)^{q_p}k(x,y).
$$
Finally, the exponents admit the expressions
$$
q_p=\sqrt{a_p(p'-1)},
\qquad
r_p=\sqrt{a_p(p-1)}.
$$
\end{remark}

\subsection{Semigroups and contractivity}\label{sec:c}
This subsection presents one of the main motivations for studying Hardy
inequalities for Sobolev--Bregman forms. We first introduce the composition
and exponential of uniformly summable kernels and use them to construct
semigroups generated by bounded perturbations of $-L_k$. We then show that
a Hardy inequality for the Sobolev--Bregman form is equivalent to the
contractivity of the corresponding perturbed semigroup on $\ell^p(X)$.
The argument follows the strategy of \cite{MR4372148}, but we include a
self-contained proof, which is particularly short and direct in the present
setting.

The composition of two uniformly summable kernels $m$ and $n$ is
defined by
\begin{equation}\label{e.dck}
mn(x,y)
:=
\sum_{z\in X}m(x,z)n(z,y),
\qquad x,y\in X.
\end{equation}
The composition is well defined, is again a uniformly summable
kernel, and satisfies
\begin{equation}\label{kernel product estimate}
\|mn\|_{\mathrm{sum}}
\leq
\|m\|_{\mathrm{sum}}\|n\|_{\mathrm{sum}}.
\end{equation}
Notice that the composition of two symmetric kernels need not be
symmetric.

The identity kernel $\delta$ defined in Subsection~\ref{ss:ko}
satisfies
$$
\delta m=m\delta=m.
$$
Moreover, if $v:X\to\R$ is bounded, then
$$
\bigl((v\delta)m\bigr)(x,y)
=
v(x)m(x,y)
$$
and
$$
\bigl(m(v\delta)\bigr)(x,y)
=
m(x,y)v(y).
$$
In particular,
\begin{equation}\label{bounded function kernel product estimate}
\|(v\delta)m\|_{\mathrm{sum}}
\leq
\|v\|_\infty\|m\|_{\mathrm{sum}},
\qquad
\|m(v\delta)\|_{\mathrm{sum}}
\leq
\|m\|_{\mathrm{sum}}\|v\|_\infty.
\end{equation}

For a uniformly summable kernel $m$, define
$$
m^0:=\delta,
\qquad
m^{j+1}:=m^jm,
\qquad j\in\N_0.
$$
The exponential of $m$ is defined by
\begin{equation}\label{def: kernel exponential}
e^{tm}
:=
\sum_{j=0}^{\infty}\frac{t^j}{j!}m^j,
\qquad t\in\R.
\end{equation}
The series converges with respect to
$\|\cdot\|_{\mathrm{sum}}$, and
\begin{equation}\label{kernel exponential sum estimate}
\|e^{tm}\|_{\mathrm{sum}}
=
\sup_{x\in X}
\sum_{y\in X}|e^{tm}(x,y)|
\leq
e^{|t|\|m\|_{\mathrm{sum}}}.
\end{equation}
If $m$ is symmetric, then $m^j$ is symmetric for every
$j\in\N_0$, and hence $e^{tm}$ is symmetric.

For uniformly summable kernels $m$ and $n$, we have
$$
T_{mn}=T_mT_n
$$
on $\ell^\infty(X)$. Consequently,
\begin{equation}\label{kernel and operator exponentials}
T_{e^{tm}}
=
e^{tT_m}.
\end{equation}
If $m$ is symmetric, then this identity holds on every
$\ell^q(X)$, $q\in[1,\infty]$, and
\begin{equation}\label{kernel exponential operator estimate}
\|T_{e^{tm}}\|_{\ell^q\to\ell^q}
\leq
\|e^{tm}\|_{\mathrm{sum}}
\leq
e^{|t|\|m\|_{\mathrm{sum}}}.
\end{equation}

Assume now that $k$ is uniformly summable and let
$w\in\ell^\infty(X)$. The \emph{perturbed operator}
$$
-L_k+M_w
$$
is associated with the symmetric uniformly summable kernel
\begin{equation}\label{def: perturbed kernel}
a:=k+(w-d_k)\delta.
\end{equation}
Indeed, by \eqref{Lk as kernel operator},
\begin{equation}\label{perturbed operator as kernel operator}
-L_k+M_w=T_a
\end{equation}
Furthermore,
\begin{equation}\label{perturbed kernel sum estimate}
\|a\|_{\mathrm{sum}}
\leq
2\|k\|_{\mathrm{sum}}
+
\|w\|_\infty.
\end{equation}

For $t\geq0$, define
\begin{equation}\label{def: perturbed semigroup}
P_t^{k,w}
=
e^{t(-L_k+M_w)}
=
T_{e^{ta}}.
\end{equation}
Then $(P_t^{k,w})_{t\geq0}$ is a semigroup of bounded operators on
$\ell^q(X)$ for every $q\in[1,\infty]$. Its kernel satisfies
\begin{equation}\label{perturbed semigroup kernel estimate}
\sup_{x\in X}
\sum_{y\in X}
\bigl|e^{ta}(x,y)\bigr|
\leq
\exp\left(
t\bigl(
2\|k\|_{\mathrm{sum}}
+
\|w\|_\infty
\bigr)
\right),
\end{equation}
and hence
\begin{equation}\label{perturbed semigroup operator estimate}
\|P_t^{k,w}\|_{\ell^q\to\ell^q}
\leq
\exp\left(
t\bigl(
2\|k\|_{\mathrm{sum}}
+
\|w\|_\infty
\bigr)
\right),
\qquad q\in[1,\infty].
\end{equation}

We now characterize Hardy inequalities for the Sobolev--Bregman form
in terms of the contractivity of the corresponding perturbed semigroups.

\begin{theorem}\label{thm: contractivity of perturbed semigroup}
Let \(p\in(1,\infty)\), let \(k\) be uniformly summable, and let
\(w\in\ell^\infty(X)\) be real-valued. Then the following statements are
equivalent:
\begin{enumerate}
\item For every \(u\in\ell^p(X)\),
\begin{equation}\label{inequality for contractivity}
    \mathcal E_p^{X,k}(u)
    \geq
    \sum_{x\in X} w(x)|u(x)|^p.
\end{equation}
\item The semigroup \((P_t^{k,w})_{t\geq0}\) is contractive on
\(\ell^p(X)\).
\end{enumerate}
\end{theorem}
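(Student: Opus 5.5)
The plan is to differentiate $t\mapsto\|P_t^{k,w}u\|_p^p$ and identify the derivative with minus $p$ times the Hardy deficit. Since $A:=-L_k+M_w=T_a$ is bounded on $\ell^p(X)$ by \eqref{perturbed operator as kernel operator}, the map $t\mapsto u_t:=P_t^{k,w}u=e^{tA}u$ is norm-differentiable in $\ell^p(X)$ with $\frac{d}{dt}u_t=Au_t$. The functional $N(v):=\|v\|_p^p=\sum_x|v(x)|^p$ is Fréchet differentiable on $\ell^p(X)$ for $p>1$, with derivative $N'(v)f=p\sum_x v(x)^{\langle p-1\rangle}f(x)$. Here $v^{\langle p-1\rangle}\in\ell^{p'}(X)$, and the pointwise bound $\bigl||b|^p-|a|^p-pa^{\langle p-1\rangle}(b-a)\bigr|=F_p(a,b)$ can be estimated in the usual way. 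By the chain rule,
\[
\frac{d}{dt}\|u_t\|_p^p=p\bigl\langle (-L_k+M_w)u_t,\,u_t^{\langle p-1\rangle}\bigr\rangle.
\]

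The key identity is the abstract analogue of \eqref{e.rg}: for $v\in\ell^p(X)$ and uniformly summable $k$,
\[
\bigl\langle L_kv,v^{\langle p-1\rangle}\bigr\rangle=\mathcal E_p^{X,k}(v).
\]
To prove it, write $L_kv(x)v(x)^{\langle p-1\rangle}=\sum_y (v(x)-v(y))v(x)^{\langle p-1\rangle}k(x,y)$. The double series converges absolutely, because
\[
\sum_{x,y}\bigl(|v(x)|^p+|v(y)||v(x)|^{p-1}\bigr)k(x,y)\le 2\|k\|_{\mathrm{sum}}\|v\|_p^p
\]
by Hölder's inequality (or Young's inequality, $|v(y)||v(x)|^{p-1}\le \frac1p|v(y)|^p+\frac1{p'}|v(x)|^p$) together with the symmetry of $k$. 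Symmetrizing in $x,y$ then gives \eqref{def: SB}, and this also shows $\mathcal E_p^{X,k}(v)<\infty$. Consequently
\[
\frac{d}{dt}\|u_t\|_p^p=-p\Bigl(\mathcal E_p^{X,k}(u_t)-\sum_x w(x)|u_t(x)|^p\Bigr).
\]

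For (1)$\Rightarrow$(2), the right-hand side is $\le0$ by \eqref{inequality for contractivity} applied to $u_t\in\ell^p(X)$. Hence $\|u_t\|_p$ is nonincreasing, so $\|P_t^{k,w}u\|_p\le\|u\|_p$. For (2)$\Rightarrow$(1), contractivity gives $\|u_t\|_p^p\le\|u_0\|_p^p$ for $t\ge0$. The right derivative at $t=0$ is therefore $\le0$, which yields $\mathcal E_p^{X,k}(u)\ge\sum_x w(x)|u(x)|^p$ for every $u\in\ell^p(X)$.

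The only delicate step is the differentiability of $t\mapsto\|u_t\|_p^p$ with the stated derivative. I would justify it directly rather than through Fréchet calculus. Set $a=u_t$ and $b=u_{t+s}$; then
\[
\|u_{t+s}\|_p^p-\|u_t\|_p^p=p\langle a^{\langle p-1\rangle},b-a\rangle+\sum_x F_p(a(x),b(x)).
\]
The first term divided by $s$ converges to $p\langle u_t^{\langle p-1\rangle},Au_t\rangle$, since $(b-a)/s\to Au_t$ in $\ell^p$ and $u_t^{\langle p-1\rangle}\in\ell^{p'}$. For the remainder, I would use a bound of the form $F_p(a,b)\le C_p\bigl(|b-a|^p+|a|^{p-2}|b-a|^2\mathbf 1_{p\ge2}\bigr)$ for $p\ge2$, and $F_p(a,b)\le C_p|b-a|^p$ for $p<2$. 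Summing via Hölder's inequality then gives $o(s)$, because $\|b-a\|_p=O(s)$ and $p>1$. Only the right derivative at $0$ and monotonicity are needed, and both follow from this.
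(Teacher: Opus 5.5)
Your proposal is correct and follows essentially the same route as the paper. Both arguments use the Green identity $\langle L_k f, f^{\langle p-1\rangle}\rangle=\mathcal E_p^{X,k}(f)$, with absolute convergence coming from uniform summability and H\"older; both differentiate $t\mapsto\|P_t^{k,w}u\|_p^p$ by the chain rule; and both obtain (1)$\Rightarrow$(2) from monotonicity and (2)$\Rightarrow$(1) from the right derivative at $t=0$. Your direct $o(s)$ estimate of the Bregman remainder $\sum_x F_p(u_t(x),u_{t+s}(x))$ simply verifies by hand the Fr\'echet differentiability of $\|\cdot\|_p^p$ that the paper quotes.
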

\begin{proof}
For $f\in\ell^p(X)$, the symmetry of $k$ gives
\begin{align}
\left\langle L_kf,f^{\langle p-1\rangle}\right\rangle
&=
\frac{1}{2}
\sum_{x\in X}\sum_{y\in X}
\bigl(f(x)-f(y)\bigr)
\bigl(
f(x)^{\langle p-1\rangle}
-
f(y)^{\langle p-1\rangle}
\bigr)
k(x,y)
\nonumber\\
&=
\E_p^{X,k}(f).
\label{Green identity for SB form}
\end{align}
The double sum is absolutely convergent by the uniform summability of
$k$ and H\"older's inequality.
\ifdetails
Indeed, by H\"older's inequality and the symmetry and uniform
summability of $k$,
\begin{align*}
&\sum_{x,y\in X}
|f(x)-f(y)|
\left|
f(x)^{\langle p-1\rangle}
-f(y)^{\langle p-1\rangle}
\right|
k(x,y)\\
&\qquad\leq
4\|k\|_{\mathrm{sum}}\|f\|_p^p<\infty.
\end{align*}
Hence the double sum in \eqref{Green identity for SB form} is
absolutely convergent.
\fi
Assume \textup{(1)}.
Fix $u\in\ell^p(X)$ and set
$$
g(t):=P_t^{k,w}u.
$$
Since $A:=-L_k+M_w$ is bounded on $\ell^p(X)$, the mapping
$t\mapsto g(t)$ is continuously differentiable in $\ell^p(X)$ and
$$
g'(t)=Ag(t).
$$
Moreover, the mapping $f\mapsto\|f\|_p^p$ is continuously Fr\'echet
differentiable on $\ell^p(X)$, with derivative
$$
D\bigl(\|\cdot\|_p^p\bigr)(f)h
=
p\left\langle h,f^{\langle p-1\rangle}\right\rangle,
$$
see \eqref{e.dpf}.
Consequently, by \eqref{Green identity for SB form},
\begin{align*}
\frac{d}{dt}\|g(t)\|_p^p
&=
p\left\langle
g'(t),g(t)^{\langle p-1\rangle}
\right\rangle\\
&=
-p\left(
\E_p^{X,k}(g(t))
-
\sum_{x\in X}w(x)|g(t,x)|^p
\right)
\leq0.
\end{align*}
Thus $t\mapsto\|P_t^{k,w}u\|_p^p$ is nonincreasing, and hence
$(P_t^{k,w})_{t\geq0}$ is contractive on $\ell^p(X)$.

Conversely, assume \textup{(2)}, i.e., for all $u\in\ell^p(X)$ and $t\geq0$,
$$
\|P_t^{k,w}u\|_p^p
\leq
\|u\|_p^p.
$$
Therefore, 
\begin{align*}
0
&\geq
\left.\frac{d}{dt}\right|_{t=0}
\|P_t^{k,w}u\|_p^p=
p\left\langle
(-L_k+M_w)u,u^{\langle p-1\rangle}
\right\rangle\\
&=
-p\left(
\E_p^{X,k}(u)
-
\sum_{x\in X}w(x)|u(x)|^p
\right).
\end{align*}
This is equivalent to \eqref{inequality for contractivity},
which completes the proof.
\end{proof}

\section{Dirichlet Laplacian on the half-line}
\label{sec: Dirichlet Laplacian}

In this section, we specialize the abstract framework of
Section~\ref{sec: abstract framework} to the Dirichlet Laplacian on the
half-line and prove
Theorems~\ref{thm: critical Hardy weights for Dirichlet Laplacian}
and~\ref{thm: sharp inverse-square Hardy}. We take
\[
    X=\N_0,\qquad \Omega=\N.
\]
Throughout this section, functions on \(\N\) are identified with their zero
extensions to \(\N_0\); thus we can also write \(u(0)=0\).

For such functions we define the positive discrete Dirichlet Laplacian by
\[
    \Delta u(x)
    :=
    2u(x)-u(x-1)-u(x+1),
    \qquad x\in\N.
\]

The corresponding nearest-neighbour Sobolev--Bregman form is
\begin{align}\label{Dirichlet Laplacian p-form_2}
    \mathcal{E}_{p}^{\N_0}(u)
    &:=
    \frac12
    \sum_{\substack{x,y\in\N_0\\ |x-y|=1}}
    \bigl(u(x)-u(y)\bigr)
    \bigl(
        u(x)^{\langle p-1\rangle}
        -
        u(y)^{\langle p-1\rangle}
    \bigr) \nonumber\\
    &=
    \sum_{x=1}^{\infty}
    \bigl(u(x)-u(x-1)\bigr)
    \bigl(
        u(x)^{\langle p-1\rangle}
        -
        u(x-1)^{\langle p-1\rangle}
    \bigr).
\end{align}
Thus \(\mathcal E_p^{\N_0}\) is precisely the Sobolev--Bregman form
associated with the nearest-neighbour kernel on \(X=\N_0\), with the
Dirichlet condition encoded by the zero extension outside
\(\Omega=\N\).

For later use in connection with perturbed semigroups, and also to give
an alternative perspective on the role of \(X\) and \(\Omega\) in
Theorem~\ref{thm: abstract ground state representation}, it is useful to view
\(\Delta\) directly as an operator on \(\ell^p(\N)\). Let
\[
    \kappa(x,y):=\mathbf 1_{\{|x-y|=1\}},
    \qquad x,y\in\N,
\]
and
\[
    V_{\partial}(x):=\mathbf 1_{\{1\}}(x).
\]
Then
\[
    \Delta=L_\kappa+M_{V_{\partial}}.
\]
Indeed, \(L_\kappa u(x)=\Delta u(x)\) for \(x\geq2\), whereas
\[
    L_\kappa u(1)=u(1)-u(2), \qquad \Delta u(1)=2u(1)-u(2).
\]
Correspondingly,
\begin{equation}\label{e.rbfN0}
        \mathcal E_p^{\N_0}(u)
    =
    \mathcal E_p^{\N,\kappa}(u)+|u(1)|^p.
\end{equation}
Thus, when the problem is formulated directly on \(\N\), the Dirichlet
boundary condition is represented by the additional bounded potential
\(V_{\partial}\), concentrated at \(1\). In the formulation
\(X=\N_0\), \(\Omega=\N\), the same boundary contribution is encoded by
the zero extension to \(X\setminus\Omega=\{0\}\) and the nearest-neighbour
kernel on \(X=\N_0\).

The proof of Theorem~\ref{thm: critical Hardy weights for Dirichlet Laplacian}
has two steps. First, we use the superharmonic functions
\[
    g(x)=x^\alpha,\qquad \alpha\in(0,1],
\]
and the ground-state representation
\eqref{abstract ground state representation: one-function formulation}
to obtain Hardy inequalities with the weights
\[
    w_{p,\alpha}(x)
    =
    \frac{1}{a_p+1}
    \frac{\Delta x^{\alpha a_p}}{x^{\alpha a_p}}
    +
    \frac{a_p}{a_p+1}
    \frac{\Delta x^\alpha}{x^\alpha},
    \qquad x\in\N,
\]
on the space \(\ell^p(\N,x^{-1})\). A regularization argument then extends
these inequalities to the larger space
\[
    \ell^p(\N,x^{-2})\supsetneq \ell^p(\N,x^{-1}).
\]
Second, for the critical range of \(\alpha\), we prove that the resulting Hardy
weights cannot be increased. This is done by approximating the corresponding
ground state with finitely supported functions and using the ground-state
representation again.
\begin{lemma}[Regularity]
\label{lem: truncation Dirichlet half-line}
Let \(p>1\), \(\alpha\in(0,1]\), and 
\(u\in\ell^p(\N,x^{-2})\). Then there exists a sequence
\((u_j)\) of finitely supported functions \(u_j:\N_0\to\R\), with
\(u_j(0)=0\), such that
\[
    \mathcal E_p^{\N_0}(u_j)
    \longrightarrow
    \mathcal E_p^{\N_0}(u),
\]
and
\[
    \sum_{x\in\N}|u_j(x)|^p w_{p,\alpha}(x)
    \longrightarrow
    \sum_{x\in\N}|u(x)|^p w_{p,\alpha}(x),
\]
as \(j\to\infty\). The limits are understood in \([0,\infty]\).
\end{lemma}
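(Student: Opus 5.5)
The plan is to truncate with a linear cutoff, then pass to the limit with Fatou's lemma for the energy and dominated convergence for both sums. For $j\in\N$ set
\[
\eta_j(x):=\min\bigl\{1,\,(2-x/j)_+\bigr\},\qquad u_j:=\eta_j u,
\]
with $u_j(0)=0$. Then $u_j$ is supported in $\{1,\dots,2j\}$, $0\le\eta_j\le1$, $\eta_j(x)$ is nondecreasing in $j$, and $\eta_j\to1$ pointwise. A sharp truncation $u\mathbf 1_{[1,j]}$ would not work: it creates the boundary term $|u(j)|^p$, which need not tend to $0$ for $u\in\ell^p(\N,x^{-2})$.

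\emph{The weighted sums.} By \eqref{Taylor expansion of Dirichlet Hardy weight} and the boundedness of $w_{p,\alpha}(1)$, the weight is nonnegative and satisfies $w_{p,\alpha}(x)\le Cx^{-2}$ on $\N$. Hence $|u_j|^pw_{p,\alpha}\uparrow|u|^pw_{p,\alpha}$, and the sums converge by monotone convergence. The common limit is finite, since $u\in\ell^p(\N,x^{-2})$.

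\emph{The energy, lower bound.} Each summand in \eqref{Dirichlet Laplacian p-form_2} is nonnegative and converges pointwise as $j\to\infty$. Fatou's lemma therefore gives $\liminf_j\mathcal E_p^{\N_0}(u_j)\ge\mathcal E_p^{\N_0}(u)$. This settles the case $\mathcal E_p^{\N_0}(u)=\infty$.

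\emph{The energy, upper bound when $\mathcal E_p^{\N_0}(u)<\infty$.} Put $v:=u^{\langle p/2\rangle}$, so that $u_j^{\langle p/2\rangle}=\eta_j^{p/2}v$. The right inequality in \eqref{equivalence of 2 and p form} bounds the $x$-th summand of $\mathcal E_p^{\N_0}(u_j)$ by
\[
2\bigl(\eta_j(x)^{p/2}v(x)-\eta_j(x-1)^{p/2}v(x-1)\bigr)^2
\le 4\bigl(v(x)-v(x-1)\bigr)^2+4|u(x-1)|^p\bigl(\eta_j(x)^{p/2}-\eta_j(x-1)^{p/2}\bigr)^2 .
\]
The first term does not depend on $j$. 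By the left inequality in \eqref{equivalence of 2 and p form} it is at most $\frac{p^2}{p-1}$ times the $x$-th summand of $\mathcal E_p^{\N_0}(u)$, so it is summable.

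For the second term, the map $t\mapsto t^{p/2}$ is Lipschitz on $[0,1]$ with constant at most $\max\{p/2,1\}$ on the relevant range; for $p<2$ I would instead use the elementary inequality $|s^{p/2}-t^{p/2}|\le|s-t|^{p/2}$. Together with $|\eta_j(x)-\eta_j(x-1)|\le 1/j$, and the fact that this increment vanishes unless $j\le x\le 2j+1$, this gives a bound by $C_p|u(x-1)|^p x^{-\min\{2,p\}}$. Since $p>1$, the exponent $\min\{2,p\}$ exceeds $1$, but it is only $\ge 2$ when $p\ge2$.

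\emph{Main obstacle: the case $p<2$.} There $x^{-p}$ decays more slowly than $x^{-2}$, so $|u(x-1)|^p x^{-p}$ is not obviously summable for $u\in\ell^p(\N,x^{-2})$. To repair this, I would apply the Lipschitz bound differently. For $x$ with $\eta_j(x)\ge 1/2$, the derivative of $t^{p/2}$ is bounded on $[1/2,1]$, so the increment is at most $C/j$. For the remaining values of $x$, I would bound $\bigl(\eta_j(x)^{p/2}-\eta_j(x-1)^{p/2}\bigr)^2$ by $C\bigl(\eta_j(x)^{p-2}\bigr)j^{-2}$, and absorb the factor $\eta_j^{p-2}$ against the $\eta_j^p$ weight in $u_j$ by redoing the split as $\eta^{p/2}(v(x)-v(x-1))$ plus $v(x-1)$ times the increment. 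Failing that, I would simply replace $\eta_j$ by $\eta_j^{m}$ with $m$ large, for instance $m\ge 2/p$. This makes $\eta_j^{mp/2}$ $C^1$ with increments $O(1/j)$ uniformly, and gives the domination $C|u(x-1)|^p x^{-2}$ in all cases.

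\emph{Conclusion.} The second term tends to $0$ pointwise, because for fixed $x$ it vanishes once $j>x$. Granted the domination by $C\bigl(\text{summand of }\mathcal E_p^{\N_0}(u)\bigr)+C|u(x-1)|^px^{-2}$, which is summable, dominated convergence yields $\mathcal E_p^{\N_0}(u_j)\to\mathcal E_p^{\N_0}(u)$.
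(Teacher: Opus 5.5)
Your argument is correct, and it takes a genuinely different route from the paper. You should commit to the cutoff $\eta_j^m$ with $m\geq\max\{1,2/p\}$ from the start rather than presenting it as a fallback. With $s:=mp/2\geq1$, the map $t\mapsto t^s$ is $s$-Lipschitz on $[0,1]$. Your splitting then bounds the $x$-th summand of $\mathcal E_p^{\N_0}(\eta_j^m u)$ by $\frac{p^2}{p-1}$ times the $x$-th summand of $\mathcal E_p^{\N_0}(u)$ plus $16s^2|u(x-1)|^px^{-2}$. This majorant is independent of $j$ and summable, so Fatou's lemma and dominated convergence finish the energy part. Since $\eta_j^m$ is still nondecreasing in $j$, monotone convergence still handles the weighted sums.

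Drop the ``absorb $\eta_j^{p-2}$'' idea. In your splitting, the piece $v(x-1)\bigl(\eta_j(x)^{p/2}-\eta_j(x-1)^{p/2}\bigr)$ carries no power of $\eta_j$ that could absorb anything. Moreover, $\eta_j(x)^{p-2}$ is infinite at $x=2j$.

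The paper uses $u\in\ell^p(\N,x^{-2})$ only to get $\liminf_{x\to\infty}|u(x)|^p/x=0$, and it picks truncation points $N_j$ with $|u(N_j)|^p/N_j\to0$. It does not multiply $u$ by a cutoff. Instead it keeps $u$ on $[0,N_j]$ and continues linearly from the single value $u(N_j)$ down to $0$ on $[N_j,2N_j]$. The energy of that linear tail telescopes exactly to $|u(N_j)|^p/N_j$. Hence $\mathcal E_p^{\N_0}(u_j)$ equals the partial energy of $u$ up to $N_j$ plus $|u(N_j)|^p/N_j$. The weighted tail is $\lesssim|u(N_j)|^p/N_j$ because $w_{p,\alpha}\lesssim x^{-2}$.

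What each route buys: the paper's argument is an exact identity. It needs neither side of \eqref{equivalence of 2 and p form}, and it makes no case split in $p$ or between finite and infinite energy. Its costs are a $u$-dependent subsequence and reliance on the one-dimensional nearest-neighbour telescoping. Your route works along the full sequence with a fixed cutoff, and it needs only $w_{p,\alpha}\geq0$ for the weighted sums. It is the standard Leibniz-rule and dominated-convergence scheme, which transfers more easily to other kernels. The price is using both sides of \eqref{equivalence of 2 and p form} and the extra power $m$ when $p<2$.
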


\begin{proof}
Since $u\in \ell^p(\N,x^{-2})$,
\[
\liminf_{x\to\infty}\frac{|u(x)|^p}{x}=0.
\]
Indeed, otherwise there would exist $\varepsilon>0$ and $N_0\in\N$ such that
\[
\frac{|u(x)|^p}{x}\geq \varepsilon,
\qquad x \geq N_0.
\]
It would then follow that
\[
\sum_{x\in\N}\frac{|u(x)|^p}{x^2}
\geq
\varepsilon\sum_{x=N_0}^\infty\frac1x
=\infty,
\]
contradicting $u\in \ell^p(\N,x^{-2})$. Therefore, there exists a
strictly increasing sequence $(N_j)_{j\in\N}$ such that
\[
\lim_{j\to\infty}\frac{|u(N_j)|^p}{N_j}=0.
\]
For every $j\in\N$, define 
\[
u_j(x):=
\begin{cases}
u(x), & 0\leq x < N_j,\\[1mm]
u(N_j)\left(2-\dfrac{x}{N_j}\right),
    & N_j\leq x < 2N_j,\\[2mm]
0, & 2 N_j \leq x.
\end{cases}
\]
Then \(u_j\) are finitely supported, \(u_j(0)=0\), and
\(u_j(x)\to u(x)\) for all $x\in\N_0$.
For $N_j<x\leq 2N_j$, we have
\[
u_j(x)-u_j(x-1)=-\frac{u(N_j)}{N_j}.
\]
Hence
\begin{align*}
&\sum_{x=N_j+1}^{2N_j}
\bigl(u_j(x)-u_j(x-1)\bigr)
\bigl(u_j(x)^{\langle p-1\rangle} -u_j(x-1)^{\langle p-1\rangle} \bigr)\\
&= \frac{|u(N_j)|^p}{N_j}
\sum_{x=N_j+1}^{2N_j}
\left(
\left( 2 - \frac{x-1}{N_j}\right)^{p-1}  -
\left( 2 - \frac{x}{N_j}\right)^{p-1}  \right)\\
&= \frac{|u(N_j)|^p}{N_j}.
\end{align*}
Therefore, by \eqref{Dirichlet Laplacian p-form_2},
\begin{align*}
\mathcal{E}_p^{\N_0}(u_j)
&=
\sum_{x=1}^{N_j}
\bigl(u(x)-u(x-1)\bigr)
\bigl(
u(x)^{\langle p-1\rangle}
-u(x-1)^{\langle p-1\rangle}
\bigr)
+
\frac{|u(N_j)|^p}{N_j}.
\end{align*}
Consequently, 
\begin{equation}\label{limit of the form}
    \lim_{j \rightarrow \infty} \mathcal{E}_p^{\N_0}(u_j) = \mathcal{E}_p^{\N_0}(u).
\end{equation}
For the Hardy-weight term, we split the sum according to the definition of
$u_j$ as
\begin{align*}
\sum_{x\in\N} |u_j(x)|^p w_{p,\alpha}(x)
&=
\sum_{x=1}^{N_j} |u(x)|^p w_{p,\alpha}(x)
+
\sum_{x=N_j+1}^{2N_j} |u_j(x)|^p w_{p,\alpha}(x).
\end{align*}
On the linear part of the truncation,
\[
|u_j(x)|
=
|u(N_j)|\left(2-\frac{x}{N_j}\right)
\leq |u(N_j)|,
\qquad N_j< x \leq 2N_j.
\]
Moreover, by \eqref{Taylor expansion of Dirichlet Hardy weight}, we have
\(w_{p,\alpha}(x)\lesssim x^{-2}\). 
Here and below, \(A(x)\lesssim B(x)\) means that there is a constant
\(C>0\), independent of \(x\), such that \(A(x)\leq C B(x)\). We write
\[A(x)\asymp B(x)\] if both \(A(x)\lesssim B(x)\) and
\(B(x)\lesssim A(x)\).
Therefore, we get
\begin{align*}
\sum_{x=N_j+1}^{2N_j}|u_j(x)|^p w_{p,\alpha}(x)
&\lesssim
|u(N_j)|^p
\sum_{x=N_j+1}^{2N_j}x^{-2}  \\
&\lesssim
\frac{|u(N_j)|^p}{N_j}.
\end{align*}
By the choice of the sequence $(N_j)$, the right-hand side converges to
zero as $j\to\infty$. Consequently,
\begin{equation}\label{limit of the Hardy term}
    \lim_{j \to \infty} \sum_{x\in\N} |u_j(x)|^p w_{p,\alpha}(x) = \sum_{x\in\N} |u(x)|^p w_{p,\alpha}(x).
\end{equation}
\end{proof}

\begin{proof}[Proof of Theorem \ref{thm: critical Hardy weights for Dirichlet Laplacian}]
Let \(\alpha\in(0,1]\) and define
\[
    g(x):=x^\alpha,\qquad x\in\N_0.
\]
Then \(g(0)=0\) and \(g(x)>0\) for \(x\in\N\). Since
\(t\mapsto t^\alpha\) is concave on \([0,\infty)\),
\[
    \Delta g(x)
    =
    2g(x)-g(x-1)-g(x+1)
    \geq0,
    \qquad x\in\N.
\]
Thus \(g\) is superharmonic on \(\N\). Moreover, if
\((p,\alpha)\neq(2,1)\), then at least one of the functions
\(x\mapsto x^\alpha\) and \(x\mapsto x^{\alpha a_p}\) is strictly concave on
\([0,\infty)\), and hence strictly superharmonic on \(\N\). It follows from
the definition of \(w_{p,\alpha}\) that
\[
    w_{p,\alpha}(x)>0,\qquad x\in\N.
\]

Proposition \ref{prop: abstract ground state representation: one-function formulation}
therefore applies with $g(x)=x^\alpha$ to every
$u\in\ell^p(\N,\mu_{p, \alpha})$ satisfying $u(0)=0$, where 
\begin{align*}
\mu_{p,\alpha}(x)
:=
&\frac{1}{a_p+1}\frac{1}{x^{\alpha a_p}}
\sum_{\substack{y\in\N_0\\ |x-y|=1}}
\left|x^{\alpha a_p}-y^{\alpha a_p}\right|
\\
&+
\frac{a_p}{a_p+1}\frac{1}{x^\alpha}
\sum_{\substack{y\in\N_0\\ |x-y|=1}}
\left|x^\alpha-y^\alpha\right|,
\qquad x\in\N.
\end{align*}
We claim that 
\[
\mu_{p,\alpha}(x)\asymp x^{-1}.
\]
Indeed, for any $\gamma\in(0,1]$ we have
\begin{align*}
\frac{1}{x^\gamma}
\sum_{\substack{y\in\N_0\\ |x-y|=1}}
|x^\gamma-y^\gamma|
&=
\frac{x^\gamma-(x-1)^\gamma+(x+1)^\gamma-x^\gamma}{x^\gamma}\\
&=
\frac{(x+1)^\gamma-(x-1)^\gamma}{x^\gamma} = \left(1+ \frac{1}{x}\right)^\gamma - \left(1- \frac{1}{x}\right)^\gamma.
\end{align*}
By the mean value theorem,
\[
\left(1+ \frac{1}{x}\right)^\gamma - \left(1- \frac{1}{x}\right)^\gamma \asymp x^{-1},
\qquad x \geq2.
\]
and therefore
\[
\frac{1}{x^\gamma}
\sum_{\substack{y\in\N_0\\ |x-y|=1}}
|x^\gamma-y^\gamma|
\asymp x^{-1}, \qquad x \geq 2.
\]
Since both $\alpha$ and $\alpha a_p$ belong to $(0,1]$, applying this
estimate with $\gamma=\alpha$ and $\gamma=\alpha a_p$ yields
\[
\mu_{p,\alpha}(x)\asymp x^{-1}, \qquad x \geq 2.
\]
The value at $x=1$ is positive and finite, so the above comparison extends
to all $x\in\N$. Consequently,
\[
\ell^p(\N,\mu_{p,\alpha})=\ell^p(\N,x^{-1}).
\]
Therefore, applying the abstract ground-state representation
\eqref{abstract ground state representation: one-function formulation}
to the present setting, we obtain, for every
$u\in \ell^p(\N,x^{-1})$ satisfying $u(0)=0$,
\begin{equation}\label{suboptimal ground state representation}
    \mathcal{E}_p^{\N_0}(u) = \sum_{x \in \N} w_{p, \alpha}(x)|u(x)|^p + \mathcal{R}_{p, \alpha}(u),
\end{equation}
where
\[
\begin{aligned}
\mathcal{R}_{p,\alpha}(u)
:=
\frac{1}{p}
\sum_{x\in\N}
\sum_{\substack{y\in\N\\ |x-y|=1}}
F_p\left(
    \frac{u(x)}{x^{\alpha q_p}},
    \frac{u(y)}{y^{\alpha q_p}}
\right)
x^{\alpha r_p}y^{\alpha q_p}.
\end{aligned}
\]
The explicit forms of $w_{p,\alpha}$ and $\mathcal R_{p,\alpha}$ follow from Remark \ref{rem: ground-state in terms of constants a_p}. In particular, since $F_p\geq 0$, we obtain the Hardy inequality
\begin{equation}\label{suboptimal Hardy on half-line}
\mathcal{E}_p^{\N_0}(u)
\geq
\sum_{x\in\N} w_{p,\alpha}(x)|u(x)|^p,
\end{equation}
for every $u\in\ell^p(\N,x^{-1})$ satisfying $u(0)=0$.
To extend this inequality to the larger class
\[
\ell^p(\N,x^{-2})\supsetneq \ell^p(\N,x^{-1}),
\]
we use the regularity Lemma \ref{lem: truncation Dirichlet half-line}. Indeed, let $u\in\ell^p(\N,x^{-2})$ satisfy $u(0)=0$, and let
$(u_j)$ be the sequence of finitely supported functions provided by
the lemma. Applying \eqref{suboptimal Hardy on half-line} to $u_j$ and letting $j \to \infty$ yields \eqref{e.HiSB}. This proves the first part of Theorem \ref{thm: critical Hardy weights for Dirichlet Laplacian}.

It remains to prove criticality. Let \(p>1\) and
\[
    \alpha\in\left(0,\frac{1}{a_p+1}\right].
\]
Suppose that \(w\) is a Hardy weight for \(\mathcal{E}_p^{\N_0}\) on \(\N\)
such that
\[
    w(x)\geq w_{p,\alpha}(x),\qquad x\in\N.
\]
Then the ground-state representation \eqref{suboptimal ground state representation}
gives
\begin{equation}\label{remainder inequality DL}
    0
    \leq
    \sum_{x\in\N}
    \bigl(w(x)-w_{p,\alpha}(x)\bigr)|u(x)|^p
    \leq
    \mathcal{R}_{p,\alpha}(u),
\end{equation}
for all \(u\in C_c(\N)\).  For
\[
    h_{p,\alpha}(x):=x^{\alpha q_p},\qquad x\in\N,
\]
the remainder vanishes:
\[
    \mathcal{R}_{p,\alpha}(h_{p,\alpha})=0.
\]
However, \(h_{p,\alpha}\) is not finitely supported. We therefore approximate
it by finitely supported functions for which the corresponding remainders tend
to zero.

For \(N\geq2\), define
\[
    u_N(x):=h_{p,\alpha}(x)\,\xi_N(x)^{\langle 2/p\rangle},
\]
where
\[
    \xi_N(x):=
    \begin{cases}
        1, & \text{if } x<N,\\[1mm]
        2-\dfrac{\log x}{\log N}, & \text{if } N\leq x<N^2,\\[2mm]
        0, & \text{if } N^2\leq x.
    \end{cases}
\]
Then \(u_N\in C_c(\N)\), and for every \(x\in\N\),
\[
    u_N(x)\to h_{p,\alpha}(x),
    \qquad N\to\infty.
\]
We claim that \(\mathcal{R}_{p,\alpha}(u_N)\to 0\) as \(N\to\infty\).
By the definition of \(\mathcal{R}_{p,\alpha}\) and
Lemma~\ref{lem: from Fp to F2}, the two directed nearest-neighbour
contributions corresponding to the edge \(\{x-1,x\}\), \(x\geq2\), are bounded by
\[
    2\Big(
        x^{\alpha r_p}(x-1)^{\alpha q_p}
        +(x-1)^{\alpha r_p}x^{\alpha q_p}
    \Big)
    \bigl(\xi_N(x)-\xi_N(x-1)\bigr)^2 .
\]
Since \(q_p,r_p\leq1\), \(q_p+r_p=1+a_p\), \(\alpha\leq1\), and
\(x/(x-1)\leq2\) for \(x\geq2\), this is bounded by
\[
    8\bigl(\xi_N(x)-\xi_N(x-1)\bigr)^2
    (x-1)^{\alpha(1+a_p)}.
\]
Therefore
\begin{align*}
    \mathcal{R}_{p,\alpha}(u_N)
    &\leq
    8 \sum_{x \geq 2}
    \bigl(\xi_N(x)-\xi_N(x-1)\big)^2
    (x-1)^{\alpha(1+a_p)}
    \\
    &=
    \frac{8}{\log^2 N}
    \sum_{x=N+1}^{N^2}
    \log^2\left(\frac{x}{x-1}\right)
    (x-1)^{\alpha(1+a_p)}
    \\
    &\leq
    \frac{8}{\log^2 N}
    \sum_{x=N+1}^{N^2}
    (x-1)^{\alpha(1+a_p)-2}.
\end{align*}
Since \(\alpha(1+a_p)\leq1\), we get
\[
    \mathcal{R}_{p,\alpha}(u_N)
    \leq
    \frac{8}{\log^2 N}
    \sum_{x=N+1}^{N^2}\frac{1}{x-1}
    \leq
    \frac{8\log(N+1)}{\log^2 N}
    \longrightarrow0 .
\]
Combining this estimate with \eqref{remainder inequality DL} and Fatou's lemma, we obtain
\begin{align*}
     0 \leq \sum_{x \in \N} \big(w(x)-w_{p, \alpha}(x)\big)|h_{p, \alpha}(x)|^p  &\leq \liminf_{N \rightarrow \infty}\sum_{x \in \N} \big(w(x)-w_{p, \alpha}(x)\big)|u_N(x)|^p \\
     &\leq \liminf_{N \rightarrow \infty} \mathcal{R}_{p, \alpha}(u_N) = 0.
\end{align*}
Since the summand is non-negative and $h_{p, \alpha}(x) > 0$, we conclude that 
$$
w(x) = w_{p, \alpha}(x), \qquad x \in \N.
$$
\end{proof}
Next, by analyzing the Hardy weights $w_{p,\alpha}$ more closely, we obtain a $p$-analogue of the classical Hardy inequality \eqref{classical quadratic Hardy} with the sharp constant.

\begin{proof}[Proof of Theorem \ref{thm: sharp inverse-square Hardy}]
Let $\alpha \in (0, 1]$. By the binomial expansion, 
$$
w_{p, \alpha}(x) = -2\sum_{i=1}^\infty \left(\frac{1}{a_p+1} {\alpha a_p \choose 2i} + \frac{a_p}{a_p+1} {\alpha \choose 2i}\right)x^{-2i}, \qquad x \geq 2.
$$
Since \(\alpha,\alpha a_p\in(0,1]\), all terms in the series above are
nonnegative. Consequently,
$$
w_{p, \alpha} (x) \geq c_{p, \alpha} x^{-2}, \qquad \forall x \geq 2,
$$
where
$$
c_{p, \alpha} := -2 \left(\frac{1}{a_p+1} {\alpha a_p \choose 2} + \frac{a_p}{a_p+1} {\alpha \choose 2}\right) = \frac{\alpha a_p}{a_p+1} \bigl(2-\alpha(a_p+1)\bigr).
$$
As a function of $\alpha$, the constant is maximized at $\alpha_*= 1/(a_p+1)$, where
$$
c_{p, \alpha_*} = \frac{a_p}{(a_p+1)^2} = \frac{p-1}{p^2}.
$$
This proves
$$w_{p, \alpha_*}(x) \geq \frac{(p-1)}{p^2} x^{-2}, \qquad \forall x \geq 2.
$$
It remains to verify that $$w_{p, \alpha_*}(1) \geq \frac{p-1}{p^2}.$$ 
Set
$$\beta_*:=\alpha_*a_p=\frac{a_p}{1+a_p}.$$
Then $\alpha_*+\beta_*=1$, and
$$w_{p,\alpha_*}(1) = \alpha_*\bigl(2-2^{\beta_*}\bigr) + \beta_*\bigl(2-2^{\alpha_*}\bigr).$$
By the convexity of $t\mapsto 2^t$, for every $t\in[0,1]$,
$$2^t\leq (1-t)2^0+t2^1=1+t.$$
Therefore,
$$2-2^{\beta_*}\geq 1-\beta_*=\alpha_*, \qquad 2-2^{\alpha_*}\geq 1-\alpha_*=\beta_*,$$
which implies the desired estimate at $x=1$,
$$w_{p,\alpha_*}(1) \geq \alpha_*^2+\beta_*^2 \geq \alpha_*\beta_* = \frac{a_p}{(1+a_p)^2} = \frac{p-1}{p^2}.
$$
This proves the Hardy inequality \eqref{e.sharp inverse-square Hardy} with the classical inverse-square weight. We next prove the sharpness of the constant.

Let $0 < \beta < 1/p$, and for each $N \in \N$ we define a finitely supported function $u_N : \N_0 \rightarrow \R$ as
\begin{align*}
    u_N(x) := 
    \begin{cases}
        x^\beta, &\textrm{if $ x < N$}, \\
        N^\beta \left(2- \frac{x}{N}\right), &\textrm{if $ N \leq x < 2N$}, \\
        0, &\textrm{if $2N \leq x$}.
    \end{cases}
\end{align*}
On the one hand, we have 
\begin{equation}
 \sum_{x \in \N} |u_N(x)|^p x^{-2} \geq \sum_{x =1}^N x^{p \beta-2}.
\end{equation}
On the other hand, by \eqref{Dirichlet Laplacian p-form_2},
\begin{align*}
 \mathcal{E}_p^{\N_0}(u_N) &= \sum_{x=2}^N \big(x^\beta - (x-1)^\beta \big) \big(x^{\beta(p-1)} - (x-1)^{\beta (p-1)}\big) + E_N + 1 \\
 & \leq \beta^2 (p-1) \sum_{x=1}^{N-1} x^{p \beta -2} + E_N + 1,
\end{align*}
where
\begin{align*}
 E_N &:= \sum_{x=N+1}^{2N} N^{p(\beta-1)} \Big( (2N - (x-1))^{p-1} - (2N - x)^{p-1} \Big) = N^{p \beta-1}.
\end{align*}
Therefore \emph{any} constant $C$ in \eqref{e.sharp inverse-square Hardy} satisfies
$$
C \sum_{x=1}^N x^{p \beta-2} \leq \beta^2 (p-1) \sum_{x=1}^N x^{p \beta-2} + N^{p \beta-1} + 1.
$$
Since $\beta < 1/p$, letting $N \rightarrow \infty$ gives
\begin{align*}
 C \leq \beta^2 (p-1) + \frac{1}{S_\beta},
\end{align*}
where 
$$
S_\beta  = \sum_{x \in \N} x^{p\beta-2}.
$$
As $\beta \rightarrow 1/p$, $S_\beta \rightarrow \infty$, and hence
$$
C \leq \frac{p-1}{p^2},
$$
proving the sharpness.     
\end{proof}
\begin{remark}\label{rem: inverse-square Hardy by comparison}
The inequality \eqref{e.sharp inverse-square Hardy} may also be obtained
directly from \eqref{equivalence of 2 and p form} and the classical quadratic
discrete Hardy inequality \eqref{classical quadratic Hardy}. Indeed, let
\(u\in\ell^p(\N)\) and put
\[
    v(x):=|u(x)|^{p/2},\qquad x\in\N_0,
\]
with \(v(0)=0\). Then \eqref{equivalence of 2 and p form} gives
\[
    \mathcal{E}_p^{\N_0}(u)
    \geq
    \frac{4(p-1)}{p^2}
    \sum_{x=1}^{\infty}
    \bigl(v(x)-v(x-1)\bigr)^2.
\]
Applying \eqref{classical quadratic Hardy} to \(v\), we obtain
\[
    \sum_{x=1}^{\infty}
    \bigl(v(x)-v(x-1)\bigr)^2
    \geq
    \frac14
    \sum_{x=1}^{\infty}
    \frac{v(x)^2}{x^2}.
\]
Since \(v(x)^2=|u(x)|^p\), this yields
\[
    \mathcal{E}_p^{\N_0}(u)
    \geq
    \frac{p-1}{p^2}
    \sum_{x=1}^{\infty}
    \frac{|u(x)|^p}{x^2}.
\]
This comparison recovers the constant in
\eqref{e.sharp inverse-square Hardy}. It does not, by itself, prove sharpness,
and in more general situations such a comparison need not yield the optimal
Hardy weight.
\end{remark}
\begin{corollary}\label{cor: Dirichlet contractivity}
If \(p\in(1,\infty)\), then \((p-1)/p^2\) is the largest constant \(C\geq0\)
such that \(-\Delta+C M_{x^{-2}}\) generates a contraction semigroup
on \(\ell^p(\N)\).
\end{corollary}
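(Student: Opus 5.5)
The plan is to reduce the corollary to Theorem~\ref{thm: contractivity of perturbed semigroup}, applied on $X=\N$ with the nearest-neighbour kernel $\kappa$. Combined with Theorem~\ref{thm: sharp inverse-square Hardy}, this should give the result.

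First, I rewrite the operator in the form covered by the abstract theorem. By the decomposition $\Delta=L_\kappa+M_{V_\partial}$ from the beginning of Section~\ref{sec: Dirichlet Laplacian},
\[
-\Delta+C M_{x^{-2}}=-L_\kappa+M_{w_C},\qquad w_C(x):=\frac{C}{x^2}-\mathbf 1_{\{1\}}(x).
\]
Here $\kappa$ is symmetric and uniformly summable, with $\|\kappa\|_{\mathrm{sum}}=2$, and $w_C\in\ell^\infty(\N)$ is real-valued. So the semigroup generated by $-\Delta+CM_{x^{-2}}$ is exactly $P_t^{\kappa,w_C}$ from \eqref{def: perturbed semigroup}, and Theorem~\ref{thm: contractivity of perturbed semigroup} applies. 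It says the semigroup is contractive on $\ell^p(\N)$ if and only if
\[
\mathcal E_p^{\N,\kappa}(u)\geq \sum_{x\in\N}\Bigl(\frac{C}{x^2}-\mathbf 1_{\{1\}}(x)\Bigr)|u(x)|^p\qquad\text{for all } u\in\ell^p(\N).
\]
By \eqref{e.rbfN0}, with $u$ extended by $u(0)=0$, this condition is equivalent to
\[
\mathcal E_p^{\N_0}(u)\geq C\sum_{x\in\N}\frac{|u(x)|^p}{x^2}\qquad\text{for all } u\in\ell^p(\N).
\]

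Next, I show that $C=(p-1)/p^2$, and hence every $C\in[0,(p-1)/p^2]$, satisfies this inequality. Since $\ell^p(\N)\subseteq\ell^p(\N,x^{-2})$, the inequality for $C=(p-1)/p^2$ is exactly Theorem~\ref{thm: sharp inverse-square Hardy}. Smaller nonnegative constants follow trivially, because the right-hand side only decreases.

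Finally, I show that no $C>(p-1)/p^2$ works. The sharpness part of Theorem~\ref{thm: sharp inverse-square Hardy} was proved with the finitely supported test functions $u_N$. These belong to $\ell^p(\N)$, so the inequality fails for such $C$ on a function in $\ell^p(\N)$. By the equivalence above, the semigroup is then not contractive. The only point needing care is that the contractivity criterion is stated for all of $\ell^p(\N)$, whereas sharpness only needs finitely supported functions. That direction is harmless, so I do not expect a genuine obstacle; the main thing to verify is the bookkeeping of the boundary term $|u(1)|^p$ in \eqref{e.rbfN0}.
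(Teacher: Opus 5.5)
Your proposal is correct and follows essentially the same route as the paper. Like the paper, you rewrite the operator as $-L_\kappa+M_{w_C}$ with $w_C=Cx^{-2}-V_\partial$, apply the contractivity theorem, use \eqref{e.rbfN0} to pass to $\mathcal E_p^{\N_0}$, and conclude from the sharp inverse-square Hardy inequality together with the sharpness of its constant, which is witnessed by finitely supported test functions in $\ell^p(\N)$.
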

\begin{proof}
Let
\[
    w_C(x):=Cx^{-2}-V_{\partial}(x),
    \qquad x\in\N.
\]
Since
\(
    \Delta=L_\kappa+M_{V_{\partial}},
\)
we have
\[
    -L_\kappa+M_{w_C}
    =
    -\Delta+C M_{x^{-2}}.
\]
Moreover, by \eqref{e.rbfN0}, the condition\/ \textup{(1)} of
Theorem~\ref{thm: contractivity of perturbed semigroup} is equivalent to
\[
    \mathcal E_p^{\N_0}(u)
    \geq
    C\sum_{x\in\N}\frac{|u(x)|^p}{x^2},
    \qquad u\in\ell^p(\N).
\]
The conclusion now follows from
Theorem~\ref{thm: sharp inverse-square Hardy} and the sharpness of its
constant.
\end{proof}

%\begin{remark}\label{rem: inverse-square Hardy by comparison}
%The inequality \eqref{e.sharp inverse-square Hardy} may also be obtained
%directly from the classical quadratic discrete Hardy inequality. Indeed, for
%\(a,b\in\R\),
%[
%    (a-b)
%    \bigl(
%        a^{\langle p-1\rangle}
%        -
%        b^{\langle p-1\rangle}
%    )
%    \geq
%    \frac{4(p-1)}{p^2}
%    \bigl(
%        |a|^{p/2}
%        -
%        |b|^{p/2}
%    \bigr)^2 .
%\]
%Applying this with \(a=u(x)\), \(b=u(x-1)\), and then using the classical
%quadratic discrete Hardy inequality for \(v(x):=|u(x)|^{p/2}\), we get
%\[
%\begin{aligned}
%    \mathcal{E}_p^{\N_0}(u)
%    &\geq
%    \frac{4(p-1)}{p^2}
%    \sum_{x=1}^{\infty}
%    \bigl(v(x)-v(x-1)\bigr)^2  \\
%    &\geq
%    \frac{4(p-1)}{p^2}\cdot\frac14
%    \sum_{x=1}^{\infty}
%    \frac{v(x)^2}{x^2}  \\
%    &=
%    \frac{p-1}{p^2}
%    \sum_{x=1}^{\infty}
%    \frac{|u(x)|^p}{x^2}.
%\end{aligned}
%\]
%This comparison explains the constant in
%\eqref{e.sharp inverse-square Hardy}. It does not, by itself, prove sharpness,
%and in more general situations such a %comparison need not yield the optimal
%Hardy weight.
%\end{remark}

\section{Fractional Laplacian on integers}
\label{sec: fractional Laplacian}

Throughout this section, \(\Delta\) denotes the standard positive discrete
Laplacian on \(\Z\),
\[
    \Delta u(x):=2u(x)-u(x-1)-u(x+1),
    \qquad x\in\Z,
\]
and \(\Delta^\sigma\) denotes its fractional power defined in
Subsection~\ref{subsec: fractional Laplacian}.

Below, we specialize the abstract framework of
Section~\ref{sec: abstract framework} to \(\Delta^\sigma\) and prove
Theorem~\ref{thm: critical Hardy weights for fractional Laplacian}. Accordingly, we take
\[
    X=\Omega=\Z.
\]

\subsection{Superharmonic functions}

For \(\sigma\in(0,1)\), we use the fractional kernel
\[
    k(x,y):=k_\sigma(x-y),\qquad x,y\in\Z,
\]
where
\[
    k_\sigma(0)=0,
\]
and, for \(z\neq0\),
\[
    k_\sigma(z)
    =
    \mathcal{A}_{-\sigma}
    \frac{\Gamma(|z|-\sigma)}{\Gamma(|z|+1+\sigma)},
\]
where \(\mathcal{A}_{-\sigma}\) is defined in
\eqref{e.leading-constant-ksigma}.
As \(|z|\to\infty\),
\begin{equation}\label{e.ksigma-asymptotic}
    k_\sigma(z)
    =
    \mathcal{A}_{-\sigma}\frac{1}{|z|^{1+2\sigma}}
    +
    O\left(\frac{1}{|z|^{2+2\sigma}}\right).
\end{equation}

We shall also use the same gamma-function expression for negative parameters.
More precisely, for \(\theta\in(-1/2,0)\) we define
\[
    k_\theta(z)
    :=
    \mathcal{A}_{-\theta}
    \frac{\Gamma(|z|-\theta)}
         {\Gamma(|z|+1+\theta)},
    \qquad z\in\Z.
\]
In this case \(k_\theta\) is strictly positive on \(\Z\).

Recall that, for the kernel \(k_\sigma\), the associated operator is defined
pointwise on
\[
    \mathcal F_\sigma(\Z)
    :=
    \left\{
        u:\Z\to\R:
        \sum_{y\in\Z} k_\sigma(x-y)|u(y)|<\infty
        \text{ for every }x\in\Z
    \right\}.
\]
Since \(k_\sigma\in\ell^1(\Z)\subset\ell^2(\Z)\), every
\(u\in\ell^2(\Z)\) belongs to \(\mathcal F_\sigma(\Z)\).
On \(\ell^2(\Z)\), this operator agrees with the positive fractional Laplacian
\(\Delta^\sigma\) defined by the spectral calculus:
\[
    \Delta^\sigma u(x)
    =
    \sum_{y\in\Z}
    \bigl(u(x)-u(y)\bigr)k_\sigma(x-y),
    \qquad x\in\Z.
\]
Throughout this section, we use the same notation \(\Delta^\sigma\) for the
pointwise extension of this expression to \(\mathcal F_\sigma(\Z)\).
The corresponding Sobolev--Bregman form is
\(\mathcal{E}_{p}^{\Z,\sigma}\), defined in
\eqref{def: fractional SB form}.
The proof of
Theorem~\ref{thm: critical Hardy weights for fractional Laplacian} follows the
same general strategy as in the Dirichlet case, but the estimates are more
delicate because the fractional kernel is nonlocal. 
Proposition~\ref{prop: abstract ground state representation: one-function formulation} and the superharmonic functions
\[
    g=k_{-\alpha},\qquad \alpha\in(\sigma,1/2),
\]
 yield Hardy inequalities
\eqref{fractional p Hardy inequality} with the weights \(w_{p,\sigma,\alpha}\) given by \eqref{fractional p Hardy weight}.

The rest of the proof has two parts. First, we identify the admissible weighted
space and determine the asymptotic behaviour of the Hardy weights. Second, we
prove criticality by approximating the corresponding ground state with
finitely supported functions. In the nonlocal case, the remainder contains
interactions between distant points of \(\Z\), so this approximation requires
additional cutoff estimates.

Henceforth, we fix \(\sigma\in(0,1/2)\) and
\(\alpha\in(\sigma,1/2)\). The starting point is the family of positive
functions \(k_{-\alpha}\). These functions belong to
\(\mathcal F_\sigma(\Z)\) and are strictly superharmonic for
\(\Delta^\sigma\). This was first established in \cite{MR3882021}; a different
proof was later given in \cite{MR4612316}.

\begin{lemma}\label{lem: fractional superharmonic functions}
Let \(\sigma\in(0,1/2)\) and \(\alpha\in(\sigma,1/2)\). Then
\begin{equation}
    \Delta^\sigma k_{-\alpha}=k_{\sigma-\alpha}.
\end{equation}
In particular, \(\Delta^\sigma k_{-\alpha}>0\), and hence \(k_{-\alpha}\) is
strictly superharmonic.
\end{lemma}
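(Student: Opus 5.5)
The plan is to identify $k_\theta$, for all $\theta\in(-1/2,1/2)$, with the Fourier multiplier of $\Delta^{\theta}$ on the torus. Then the identity $\Delta^\sigma k_{-\alpha}=k_{\sigma-\alpha}$ becomes a semigroup law for Fourier multipliers. The symbol of $\Delta$ is $m(\xi)=2-2\cos\xi=4\sin^2(\xi/2)$ for $\xi\in(-\pi,\pi]$. The basic fact to establish is the classical formula
\[
\frac{1}{2\pi}\int_{-\pi}^{\pi}\bigl(4\sin^2(\xi/2)\bigr)^{\theta}e^{iz\xi}\,d\xi=\mathcal{A}_{-\theta}'\,\frac{\Gamma(|z|-\theta)}{\Gamma(|z|+1+\theta)}
\]
for $\theta\in(-1/2,1/2)$, $\theta\neq0$, with a signed constant. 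I would derive it from the beta-type integral $\int_0^\pi \sin^{2\theta}(t)\cos(2zt)\,dt$, which is known in closed form via gamma functions; Gradshteyn--Ryzhik 3.631.8 gives $\pi\cos(z\pi)\Gamma(2\theta+1)/\bigl(4^{\theta}\Gamma(1+\theta+z)\Gamma(1+\theta-z)\bigr)$. The reflection formula then turns this into the stated expression with constant $4^\theta\Gamma(1/2+\theta)/(\sqrt\pi\,\Gamma(-\theta))$.

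For $\theta=\sigma>0$ this Fourier coefficient is $-k_\sigma(z)$ for $z\neq0$, since $\Gamma(-\sigma)<0$. This is consistent with \eqref{graph Laplacian representation of fractional Laplacian}, and I would cite \cite{MR3882021} for it. For $\theta=-\alpha<0$ the symbol $m^{-\alpha}$ lies in $L^1(\mathbb T)$ because $2\alpha<1$. Its Fourier coefficients are then exactly $k_{-\alpha}(z)>0$ with the given normalization $\mathcal A_{\alpha}$. Similarly $\widehat{k_{\sigma-\alpha}}=m^{\sigma-\alpha}$, again integrable since $\sigma-\alpha\in(-1/2,0)$.

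The main step is then to justify
\[
\sum_{y}\bigl(k_{-\alpha}(x)-k_{-\alpha}(y)\bigr)k_\sigma(x-y)=\frac1{2\pi}\int m^\sigma m^{-\alpha}e^{ix\xi}\,d\xi .
\]
The sum converges absolutely because $k_{-\alpha}(y)\asymp|y|^{2\alpha-1}$ is bounded, $k_\sigma\in\ell^1$, and hence $k_{-\alpha}\in\mathcal F_\sigma(\Z)$. Since $k_{-\alpha}\notin\ell^2$ in general ($4\alpha-2<-1$ iff $\alpha<1/4$), I would use a regularization: replace $m^{-\alpha}$ by $(m+\varepsilon)^{-\alpha}$. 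Its coefficients $g_\varepsilon$ are in $\ell^2$, and on $\ell^2$ the spectral definition and the kernel formula agree. This gives $\Delta^\sigma g_\varepsilon=\mathcal F^{-1}[m^\sigma(m+\varepsilon)^{-\alpha}]$. As $\varepsilon\to0$, the right-hand side converges pointwise by dominated convergence, since $m^{\sigma}(m+\varepsilon)^{-\alpha}\le m^{\sigma-\alpha}\in L^1$.

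\textbf{Main obstacle.} Passing to the limit on the left-hand side requires $g_\varepsilon\to k_{-\alpha}$ pointwise, which again follows by dominated convergence. It also requires a uniform bound $|g_\varepsilon(y)|\le C$, which holds since $\|g_\varepsilon\|_\infty\le\|(m+\varepsilon)^{-\alpha}\|_{L^1}\le\|m^{-\alpha}\|_{L^1}$. With this bound, dominated convergence in $y$ against $k_\sigma(x-y)\in\ell^1$ applies.

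This yields $\Delta^\sigma k_{-\alpha}=k_{\sigma-\alpha}$. Positivity of $k_{\sigma-\alpha}$ follows from the gamma expression, since $\sigma-\alpha\in(-1/2,0)$ and $\Gamma(|z|-\theta)>0$, $\Gamma(|z|+1+\theta)>0$. Hence $k_{-\alpha}$ is strictly superharmonic.
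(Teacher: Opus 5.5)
The paper does not prove this lemma. It quotes the identity from \cite{MR3882021} and mentions an alternative proof in \cite{MR4612316}. Your Fourier-multiplier argument is therefore a genuinely self-contained replacement, and it is correct.

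The Gradshteyn--Ryzhik evaluation gives $\frac{1}{2\pi}\int_{-\pi}^{\pi}m^\theta e^{iz\xi}\,d\xi=(-1)^z\Gamma(2\theta+1)/\bigl(\Gamma(1+\theta+z)\Gamma(1+\theta-z)\bigr)$. To reach the constant $4^\theta\Gamma(1/2+\theta)/(\sqrt\pi\,\Gamma(-\theta))$ you need Legendre's duplication formula for $\Gamma(2\theta+1)$, not only reflection. With that in place, the coefficients of $m^{-\alpha}$ and $m^{\sigma-\alpha}$ are exactly $k_{-\alpha}$ and $k_{\sigma-\alpha}$ at every $z$, including $z=0$. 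The coefficients of $m^\sigma$ are $-k_\sigma(z)$ for $z\neq0$ and $\sum_{z\neq0}k_\sigma(z)$ at $z=0$. Your regularization and both dominated-convergence steps are sound.

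The regularization can in fact be dropped. The Fourier coefficients of $m^\sigma$ are absolutely summable, so its Fourier series converges uniformly to $m^\sigma$. Integrating that series termwise against $m^{-\alpha}e^{ix\xi}\in L^1$ (Fubini) gives $\sum_y\bigl(k_{-\alpha}(x)-k_{-\alpha}(y)\bigr)k_\sigma(x-y)=\frac{1}{2\pi}\int m^{\sigma-\alpha}e^{ix\xi}\,d\xi$ in one line. The same computation re-derives \eqref{graph Laplacian representation of fractional Laplacian}, so you need not cite it.

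Compared with the bare citation, your route makes the lemma transparent: it is the semigroup law $m^\sigma m^{-\alpha}=m^{\sigma-\alpha}$. It also shows what each hypothesis does. The condition $\alpha<1/2$ is exactly integrability of $m^{-\alpha}$, hence boundedness of $k_{-\alpha}$. The condition $\alpha>\sigma$ is exactly what makes the output a negative power of $m$, all of whose coefficients are positive; for $\alpha<\sigma$ the off-diagonal values would be negative. Finally, your computation explicitly checks the paper's normalization conventions, which the citation leaves implicit: $\mathcal A_{-\theta}$ extended to negative parameters, and $k_\theta(0)$ given by the gamma formula for $\theta<0$ but set to $0$ for $\theta=\sigma>0$.
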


\subsection{Hardy weights}
\label{subsec: fractional Hardy weights}

We first apply the one-function ground-state representation to the
superharmonic functions \(k_{-\alpha}\).

\begin{proposition}[Fractional Hardy weights]
\label{prop: fractional Hardy weights}
Let \(p\in(1,\infty)\), \(\sigma\in(0,1/2)\), and
\(\alpha\in(\sigma,1/2)\). Put
\[
    h_{p,\sigma,\alpha}:=k_{-\alpha}^{q_p}
\]
and
\[
    \mu_{p,\sigma,\alpha}:=\nu_{k_{-\alpha}},
\]
where \(\nu_g\) is defined in
\eqref{abstract one-function reference measure}. Define
\begin{equation}\label{fractional Hardy weight}
    w_{p,\sigma,\alpha}(x)
    :=
    \frac{1}{a_p+1}
    \frac{\Delta^\sigma k_{-\alpha}^{a_p}(x)}
         {k_{-\alpha}^{a_p}(x)}
    +
    \frac{a_p}{a_p+1}
    \frac{\Delta^\sigma k_{-\alpha}(x)}
         {k_{-\alpha}(x)},
    \qquad x\in\Z.
\end{equation}
Then
\[
    w_{p,\sigma,\alpha}(x)>0,
    \qquad x\in\Z,
\]
and, for every \(u\in\ell^p(\Z,\mu_{p,\sigma,\alpha})\),
\begin{equation}\label{fractional ground state representation}
    \mathcal E_p^{\Z,\sigma}(u)
    =
    \sum_{x\in\Z} w_{p,\sigma,\alpha}(x)|u(x)|^p
    +
    \mathcal R_{p,\sigma,\alpha}(u),
\end{equation}
where
\begin{align}
    \mathcal R_{p,\sigma,\alpha}(u)
    :={}&
    \frac{1}{p}
    \sum_{x,y\in\Z}
    F_p\left(
        \frac{u(x)}{k_{-\alpha}(x)^{q_p}},
        \frac{u(y)}{k_{-\alpha}(y)^{q_p}}
    \right)
    \notag\\
    &\qquad\times
    k_{-\alpha}(x)^{r_p}
    k_{-\alpha}(y)^{q_p}
    k_\sigma(x-y).
    \label{fractional ground state remainder}
\end{align}
In particular,
\begin{equation}\label{fractional Hardy inequality with reference measure}
    \mathcal E_p^{\Z,\sigma}(u)
    \geq
    \sum_{x\in\Z} w_{p,\sigma,\alpha}(x)|u(x)|^p .
\end{equation}
\end{proposition}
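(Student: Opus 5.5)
The plan is to apply Proposition~\ref{prop: abstract ground state representation: one-function formulation} with $X=\Omega=\Z$, $k(x,y)=k_\sigma(x-y)$ and $g=k_{-\alpha}$, and then to translate the output through Remark~\ref{rem: ground-state in terms of constants a_p}. That proposition requires $g$ to be nonnegative and positive on $\Omega$, to belong to $\mathcal F_\sigma(\Z)$, and to be superharmonic on $\Omega$. The condition ``zero on $X\setminus\Omega$'' is vacuous because $\Omega=\Z$.

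Positivity is immediate, since $k_{-\alpha}$ is strictly positive for $-\alpha\in(-1/2,0)$. For membership in $\mathcal F_\sigma(\Z)$, the gamma-ratio asymptotics give $k_{-\alpha}(y)\asymp (1+|y|)^{2\alpha-1}$, while $k_\sigma(x-y)\asymp |x-y|^{-1-2\sigma}$ for $y\neq x$. For fixed $x$, the series $\sum_y k_\sigma(x-y)k_{-\alpha}(y)$ therefore behaves like $\sum_y |y|^{-2-2\sigma+2\alpha}$, which converges because $2\alpha-2\sigma-2<-1$. Superharmonicity, in fact strict superharmonicity, is Lemma~\ref{lem: fractional superharmonic functions}: $\Delta^\sigma k_{-\alpha}=k_{\sigma-\alpha}>0$, where $\sigma-\alpha\in(-1/2,0)$ makes $k_{\sigma-\alpha}$ positive.

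With these hypotheses verified, Proposition~\ref{prop: abstract ground state representation: one-function formulation} applies with $h=g^{q_p}=k_{-\alpha}^{q_p}=h_{p,\sigma,\alpha}$. It yields the identity for every $u\in\ell^p(\Z,\mu_h)$. Remark~\ref{rem: ground-state in terms of constants a_p} then gives the following translations:
\begin{itemize}
\item the admissible weight is $\mu_h=\nu_{k_{-\alpha}}=\mu_{p,\sigma,\alpha}$;
\item the weight $w_h$ equals the expression \eqref{fractional Hardy weight}, with $L_k=\Delta^\sigma$ pointwise on $\mathcal F_\sigma(\Z)$;
\item the remainder takes exactly the form \eqref{fractional ground state remainder}, using $h^{p-1}=k_{-\alpha}^{r_p}$.
\end{itemize}
The Hardy inequality \eqref{fractional Hardy inequality with reference measure} follows by dropping the nonnegative remainder, since $F_p\ge0$.

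It remains to prove strict positivity of $w_{p,\sigma,\alpha}$. The second term of \eqref{fractional Hardy weight} equals $\frac{a_p}{a_p+1}k_{\sigma-\alpha}/k_{-\alpha}>0$, because $a_p>0$. For the first term, the Jensen argument in the proof of Proposition~\ref{prop: abstract ground state representation: one-function formulation} gives $\Delta^\sigma k_{-\alpha}^{a_p}\ge0$; here $d_k(x)=\|k_\sigma\|_{\ell^1}>0$. Hence the first term is nonnegative, and the sum is strictly positive. Alternatively, one can note that Jensen's inequality is strict, because $g(x)>\frac1{d_k}\sum_y g(y)k(x,y)$.

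The only step needing genuine care is checking $k_{-\alpha}\in\mathcal F_\sigma(\Z)$, together with the well-definedness of $\Delta^\sigma k_{-\alpha}^{a_p}$. The latter follows from $k_{-\alpha}^{a_p}\le 1+k_{-\alpha}$. Both reduce to the gamma-function asymptotics noted above, and everything else is direct bookkeeping from the abstract results.
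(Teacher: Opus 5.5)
Your proposal is correct and follows essentially the same route as the paper: you apply the one-function ground-state representation with $g=k_{-\alpha}$ and use $\Delta^\sigma k_{-\alpha}=k_{\sigma-\alpha}>0$ for strict superharmonicity. As in the paper, positivity of $w_{p,\sigma,\alpha}$ comes from combining this with the Jensen-based superharmonicity of $k_{-\alpha}^{a_p}$. Your check that $k_{-\alpha}\in\mathcal F_\sigma(\Z)$ is valid, though it is immediate: $k_{-\alpha}$ is bounded, and $\ell^\infty(\Z)\subseteq\mathcal F_\sigma(\Z)$.
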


\begin{proof}
By Lemma~\ref{lem: fractional superharmonic functions},
\(k_{-\alpha}\) is strictly positive and strictly superharmonic on \(\Z\).
Therefore
Proposition~\ref{prop: abstract ground state representation: one-function formulation}
applies with \(g=k_{-\alpha}\). The identity
\eqref{fractional ground state representation}, the formula
\eqref{fractional Hardy weight}, and the remainder
\eqref{fractional ground state remainder} follow from this proposition.

The positivity of \(w_{p,\sigma,\alpha}\) follows from
\[
    \Delta^\sigma k_{-\alpha}=k_{\sigma-\alpha}>0
\]
and the superharmonicity of \(k_{-\alpha}^{a_p}\).
\end{proof}

\subsection{Reference measure and asymptotics}
\label{subsec: fractional reference measure and asymptotics}

To identify the admissible space \(\ell^p(\Z,\mu_{p,\sigma,\alpha})\) and
determine the asymptotic behaviour of \(w_{p,\sigma,\alpha}\), we first
establish some auxiliary estimates.

\begin{lemma}[Convolution estimate]
\label{lem: fractional convolution estimate}
Let $\sigma\in(0,1/2)$ and $a \in (0,1]$. Then
\begin{equation}\label{fractional convolution estimate}
\sum_{y\in\Z\setminus\{0,x\}}
\frac{1}{|y|^{1+a}}
\frac{1}{|x-y|^{1+2\sigma}}
\lesssim
|x|^{-1-\min\{2\sigma,a\}},
\qquad x \neq 0.
\end{equation}
\end{lemma}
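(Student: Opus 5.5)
The plan is to split the sum over $y\in\Z\setminus\{0,x\}$ according to the distance of $y$ to $0$ and to $x$, using $|x|$ as the scale. By symmetry ($x\mapsto -x$, $y\mapsto -y$) we may assume $x\geq1$. We decompose into three regions:
\[
A:=\{y: |y|\leq |x|/2\},\qquad B:=\{y: |x-y|\leq |x|/2\},\qquad C:=\Z\setminus(A\cup B\cup\{0,x\}).
\]
On each region, one of the two factors is essentially frozen at size comparable to a power of $|x|$, while the other factor is summed.

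\emph{Region $A$.} Here $|x-y|\geq |x|/2$, so $|x-y|^{-1-2\sigma}\lesssim |x|^{-1-2\sigma}$. The remaining sum $\sum_{0<|y|\leq|x|/2}|y|^{-1-a}$ is bounded by a constant, since $a>0$. Hence this region contributes $\lesssim |x|^{-1-2\sigma}$.

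\emph{Region $B$.} Here $|y|\geq |x|/2$, so $|y|^{-1-a}\lesssim |x|^{-1-a}$. The sum $\sum_{0<|x-y|\leq |x|/2}|x-y|^{-1-2\sigma}$ is bounded, since $2\sigma>0$. This region contributes $\lesssim |x|^{-1-a}$.

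\emph{Region $C$.} Here both $|y|>|x|/2$ and $|x-y|>|x|/2$. Moreover $|x-y|\geq |y|/3$: if $|y|\leq 3|x|/2$ this follows from $|x-y|>|x|/2\geq |y|/3$, and otherwise from $|x-y|\geq |y|-|x|\geq |y|/3$. Therefore
\[
\sum_{y\in C}\lesssim \sum_{|y|>|x|/2}|y|^{-2-a-2\sigma}\lesssim |x|^{-1-a-2\sigma},
\]
which is smaller than either of the previous bounds.

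Adding the three pieces gives a bound $\lesssim |x|^{-1-2\sigma}+|x|^{-1-a}\lesssim |x|^{-1-\min\{2\sigma,a\}}$ for $|x|\geq1$, as claimed.

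There is no real obstacle in this argument. The only points requiring care are that the constants stay uniform in $x$, which is guaranteed by the convergence of $\sum|y|^{-1-a}$ and $\sum|z|^{-1-2\sigma}$ because $a,2\sigma>0$, and the handling of small $|x|$. In particular, when $|x|=1$ the region $A$ is empty apart from the excluded point $0$, and the estimate is then trivially a bounded sum.
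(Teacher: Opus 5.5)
Your proof is correct and follows essentially the same route as the paper: a three-region split (near $0$, near $x$, far away) in which one factor is frozen at size $\asymp |x|^{-1-2\sigma}$ or $|x|^{-1-a}$ while the other is summed using $a,2\sigma>0$. The only cosmetic difference is that the paper cuts along $y\le x/2$, $x/2<y<3x/2$, $y\ge 3x/2$ and bounds the far region by freezing $|x-y|\ge x/2$, whereas you bound it via $|x-y|\ge |y|/3$; both give the same $|x|^{-1-a-2\sigma}$ bound.
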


\begin{proof}
By symmetry it is enough to consider $x>0$. Split the sum into
the regions
\[
y\leq \frac{x}{2},
\qquad
\frac{x}{2}<y<\frac{3x}{2},
\qquad
y\geq\frac{3x}{2},
\]
and denote the corresponding sums by $S_1,S_2,S_3$ respectively. On the first region, using $|x-y| \geq x/2$,  we get
\[
S_1
\lesssim
x^{-1-2\sigma}
\sum_{\substack{y\leq x/2\\y\neq0}}
|y|^{-1-a}
\lesssim
x^{-1-2\sigma}.
\]
On the second region, $|y|\asymp x$, and hence
\[
S_2
\lesssim
x^{-1-a}
\sum_{y\neq x}|x-y|^{-1-2\sigma}
\lesssim
x^{-1-a}.
\]
Finally, using $|x-y| \geq x/2$ in the third region, we have
\[
S_3
\lesssim
x^{-1-2\sigma}
\sum_{y\geq3x/2}y^{-1-a}
\lesssim
x^{-1-2\sigma-a}.
\]
This proves \eqref{fractional convolution estimate}.
\end{proof}

\begin{lemma}\label{lem:asymptotics of kalpha}
Let \(\sigma\in(0,1/2)\), \(\alpha\in(\sigma,1/2)\), and
\(a\in(0,1]\). Put
\[
    \gamma=\frac{1-a(1-2\alpha)}{2}.
\]
Then \(\gamma\in[\alpha,1/2)\), and, as \(|x|\to\infty\),
\begin{equation}\label{asymptotics of k^a}
    \frac{1}{\mathcal A_\alpha^a}k_{-\alpha}(x)^a
    =
    \frac{1}{\mathcal A_\gamma}k_{-\gamma}(x)
    +
    O\left(|x|^{2\gamma-2}\right),
\end{equation}
and
\begin{equation}\label{asymptotics of Lk^a}
    \frac{1}{\mathcal A_\alpha^a}
    \Delta^\sigma k_{-\alpha}^a(x)
    =
    \frac{1}{\mathcal A_\gamma}
    \Delta^\sigma k_{-\gamma}(x)
    +
    O\left(
        |x|^{-1-\min\{2\sigma,1-2\gamma\}}
    \right).
\end{equation}
\end{lemma}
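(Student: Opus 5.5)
The plan is to reduce both assertions to the power-type behaviour of the gamma-function ratios, and then to use the linearity of $\Delta^\sigma$ together with Lemma~\ref{lem: fractional convolution estimate}.

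First, the range of $\gamma$. Since $a\in(0,1]$ and $1-2\alpha\in(0,1)$, we have $a(1-2\alpha)\in(0,1-2\alpha]$. Hence $\gamma=\tfrac12-\tfrac a2(1-2\alpha)$ lies in $[\alpha,1/2)$, with $\gamma=\alpha$ exactly when $a=1$.

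Next, \eqref{asymptotics of k^a}. By definition, for $\theta=-\alpha$,
\[
k_{-\alpha}(z)=\mathcal A_\alpha\frac{\Gamma(|z|+\alpha)}{\Gamma(|z|+1-\alpha)} .
\]
The standard ratio asymptotics $\Gamma(t+b)/\Gamma(t+c)=t^{b-c}\bigl(1+O(t^{-1})\bigr)$ then give
\[
\frac{k_{-\alpha}(z)}{\mathcal A_\alpha}=|z|^{2\alpha-1}\bigl(1+O(|z|^{-1})\bigr).
\]
Raising to the power $a$ keeps the relative error of order $O(|z|^{-1})$, since $(1+s)^a=1+O(s)$ for small $s$. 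Because $a(2\alpha-1)=2\gamma-1$, this yields
\[
\frac{k_{-\alpha}(z)^a}{\mathcal A_\alpha^a}=|z|^{2\gamma-1}\bigl(1+O(|z|^{-1})\bigr).
\]
The same ratio asymptotics with $\gamma$ in place of $\alpha$ give
\[
\frac{k_{-\gamma}(z)}{\mathcal A_\gamma}=|z|^{2\gamma-1}\bigl(1+O(|z|^{-1})\bigr).
\]
Subtracting the two expansions gives \eqref{asymptotics of k^a} with error $O(|z|^{2\gamma-2})$.

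Finally, \eqref{asymptotics of Lk^a}. Put
\[
D:=\mathcal A_\alpha^{-a}k_{-\alpha}^a-\mathcal A_\gamma^{-1}k_{-\gamma}.
\]
Both terms are bounded on $\Z$, so they lie in $\mathcal F_\sigma(\Z)$, and by linearity it suffices to bound $\Delta^\sigma D$. Set $b:=1-2\gamma\in(0,1]$. The first step shows
\[
|D(y)|\lesssim |y|^{2\gamma-2}=|y|^{-1-b}\quad (y\neq0),
\]
and $D(0)$ is finite. We write
\[
\Delta^\sigma D(x)=d_k D(x)-\sum_{y}k_\sigma(x-y)D(y),
\]
where $d_k=\sum_z k_\sigma(z)<\infty$ is constant. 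The first term is $O(|x|^{-1-b})$. In the sum, the term $y=x$ vanishes because $k_\sigma(0)=0$. The term $y=0$ equals $k_\sigma(x)D(0)=O(|x|^{-1-2\sigma})$ by \eqref{e.ksigma-asymptotic}. For the remaining $y\notin\{0,x\}$ we use $k_\sigma(x-y)\lesssim|x-y|^{-1-2\sigma}$ and $|D(y)|\lesssim|y|^{-1-b}$. Lemma~\ref{lem: fractional convolution estimate} with exponent $b$ then bounds this part by $|x|^{-1-\min\{2\sigma,b\}}$. Altogether
\[
|\Delta^\sigma D(x)|\lesssim |x|^{-1-\min\{2\sigma,1-2\gamma\}},
\]
which is \eqref{asymptotics of Lk^a}.

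The only delicate point is the first step: the $O(|z|^{-1})$ relative error must be uniform and must survive the power $a$. This is routine once $|z|$ is large and the ratios stay away from $0$. I expect no genuine obstacle beyond matching the exponent $2\gamma-2=-1-b$ with the hypothesis of the convolution lemma, which requires $b\in(0,1]$ and is guaranteed by $\gamma\in[\alpha,1/2)$.
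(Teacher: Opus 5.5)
Your proof is correct and follows the paper's argument. You use the gamma-ratio asymptotics for $k_{-\alpha}^a$ and $k_{-\gamma}$, then linearity of $\Delta^\sigma$ and Lemma~\ref{lem: fractional convolution estimate} applied to the difference. You are slightly more careful than the paper, which simply invokes the convolution lemma: you separately treat the diagonal term $d_k D(x)$ and the $y=0$ term that the lemma excludes.
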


\begin{proof}
Similarly to \eqref{e.ksigma-asymptotic},
\begin{equation}\label{asymptotics of k}
    k_{-\alpha}(x)
    =
    \mathcal A_\alpha |x|^{2\alpha-1}
    \left(1+O(|x|^{-1})\right),
    \qquad |x|\to\infty.
\end{equation}
Since \(a\in(0,1]\), this gives
\[
    \frac{1}{\mathcal A_\alpha^a}k_{-\alpha}(x)^a
    =
    |x|^{a(2\alpha-1)}
    \left(1+O(|x|^{-1})\right).
\]
By the definition of \(\gamma\),
\[
    a(2\alpha-1)=2\gamma-1.
\]
Thus
\[
    \frac{1}{\mathcal A_\alpha^a}k_{-\alpha}(x)^a
    =
    |x|^{2\gamma-1}
    +
    O\left(|x|^{2\gamma-2}\right).
\]
On the other hand,
\[
    \frac{1}{\mathcal A_\gamma}k_{-\gamma}(x)
    =
    |x|^{2\gamma-1}
    +
    O\left(|x|^{2\gamma-2}\right).
\]
This proves \eqref{asymptotics of k^a}.

Let
\[
    r(x)
    :=
    \frac{1}{\mathcal A_\alpha^a}k_{-\alpha}(x)^a
    -
    \frac{1}{\mathcal A_\gamma}k_{-\gamma}(x).
\]
Then
\[
    r(x)=O\left(|x|^{2\gamma-2}\right),
    \qquad |x|\to\infty.
\]
By Lemma~\ref{lem: fractional convolution estimate}, 
\[
    \Delta^\sigma r(x)
    =
    O\left(
        |x|^{-1-\min\{2\sigma,1-2\gamma\}}
    \right).
\]
Since
\[
    \frac{1}{\mathcal A_\alpha^a}k_{-\alpha}^a
    =
    \frac{1}{\mathcal A_\gamma}k_{-\gamma}+r,
\]
applying \(\Delta^\sigma\) gives \eqref{asymptotics of Lk^a}.
\end{proof}

\begin{lemma}\label{lem: power difference estimate}
Let \(\sigma\in(0,1/2)\) and \(\eta\in(0,1/2)\). Then, for \(x\geq2\),
\[
    \sum_{y\in\Z\setminus\{0,x\}}
    \left|x^{2\eta-1}-|y|^{2\eta-1}\right|
    |x-y|^{-1-2\sigma}
    \lesssim
    x^{2\eta-2\sigma-1}.
\]
\end{lemma}

\begin{proof}
We split the sum into the three regions
\[
    0<|y|\leq \frac{x}{2},
    \qquad
    \frac{x}{2}<|y|<2x,
    \qquad
    |y|\geq2x.
\]

If \(0<|y|\leq x/2\), then, since \(2\eta-1<0\),
\[
    \left|x^{2\eta-1}-|y|^{2\eta-1}\right|
    \lesssim |y|^{2\eta-1}.
\]
Moreover, \(|x-y|\geq x/2\), and hence
\[
\sum_{0<|y|\leq x/2}
\left|x^{2\eta-1}-|y|^{2\eta-1}\right|
|x-y|^{-1-2\sigma}
\lesssim
x^{-1-2\sigma}
\sum_{1\leq |y|\leq x/2}|y|^{2\eta-1}
\lesssim
x^{2\eta-2\sigma-1}.
\]

Next, suppose that \(x/2<|y|<2x\). If \(y>0\), then the mean value theorem gives
\[
    \left|x^{2\eta-1}-y^{2\eta-1}\right|
    \lesssim
    x^{2\eta-2}|x-y|.
\]
Therefore
\[
\sum_{\substack{x/2<y<2x\\ y\neq x}}
\left|x^{2\eta-1}-y^{2\eta-1}\right|
|x-y|^{-1-2\sigma}
\lesssim
x^{2\eta-2}
\sum_{1\leq |x-y|\leq x}|x-y|^{-2\sigma}
\lesssim
x^{2\eta-2\sigma-1}.
\]
If \(y<0\), then \(|x-y|\asymp x\) and
\[
    \left|x^{2\eta-1}-|y|^{2\eta-1}\right|
    \lesssim x^{2\eta-1}.
\]
Thus
\[
\sum_{-2x<y<-x/2}
\left|x^{2\eta-1}-|y|^{2\eta-1}\right|
|x-y|^{-1-2\sigma}
\lesssim
x\cdot x^{2\eta-1}\cdot x^{-1-2\sigma}
=
x^{2\eta-2\sigma-1}.
\]

Finally, if \(|y|\geq2x\), then \(|x-y|\asymp |y|\) and
\[
    \left|x^{2\eta-1}-|y|^{2\eta-1}\right|
    \lesssim x^{2\eta-1}.
\]
Hence
\[
\sum_{|y|\geq2x}
\left|x^{2\eta-1}-|y|^{2\eta-1}\right|
|x-y|^{-1-2\sigma}
\lesssim
x^{2\eta-1}
\sum_{|y|\geq2x}|y|^{-1-2\sigma}
\lesssim
x^{2\eta-2\sigma-1}.
\]
Combining the three estimates proves the claim.
\end{proof}

\begin{lemma}[Reference measure estimate]
\label{lem: fractional reference measure estimate}
Let \(\sigma\in(0,1/2)\), \(\alpha\in(\sigma,1/2)\), and
\(a\in(0,1]\). Then
\[
    \frac{1}{k_{-\alpha}(x)^a}
    \sum_{y\in\Z}
    \left|
        k_{-\alpha}(x)^a-k_{-\alpha}(y)^a
    \right|
    k_\sigma(x-y)
    \asymp
    (1+|x|)^{-2\sigma},
    \qquad x\in\Z.
\]
\end{lemma}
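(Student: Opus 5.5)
Write $\varphi:=k_{-\alpha}^a$ and denote the quantity in question by
\[
Q(x):=\frac{1}{\varphi(x)}\sum_{y\in\Z}|\varphi(x)-\varphi(y)|\,k_\sigma(x-y).
\]
Since $Q$ is strictly positive and finite at every point, the two-sided estimate only needs to be proved for $|x|$ large; on any finite set both sides are comparable to constants.

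\emph{Positivity and finiteness.} Finiteness holds because $\varphi$ is bounded: $k_{-\alpha}$ decays like $|x|^{2\alpha-1}$. Positivity holds because $\varphi$ is not constant, so some $y$ with $k_\sigma(x-y)>0$ gives a nonzero term. By symmetry $k_{-\alpha}(-x)=k_{-\alpha}(x)$, so it suffices to treat $x\geq2$ large.

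\emph{Lower bound.} I will use the triangle inequality together with Lemma~\ref{lem: fractional superharmonic functions}:
\[
\sum_y|\varphi(x)-\varphi(y)|k_\sigma(x-y)\ \ge\ \Delta^\sigma\varphi(x).
\]
To make the right-hand side explicit, I invoke Lemma~\ref{lem:asymptotics of kalpha} with $\gamma=(1-a(1-2\alpha))/2\in[\alpha,1/2)$. Since $\gamma>\sigma$, Lemma~\ref{lem: fractional superharmonic functions} applies to $k_{-\gamma}$ and gives
\[
\Delta^\sigma\varphi(x)=\frac{\mathcal A_\alpha^a}{\mathcal A_\gamma}\,k_{\sigma-\gamma}(x)+O\bigl(|x|^{-1-\min\{2\sigma,1-2\gamma\}}\bigr).
\]
Here $k_{\sigma-\gamma}(x)\asymp |x|^{2\gamma-2\sigma-1}$, by the same gamma-ratio asymptotics as in \eqref{asymptotics of k}. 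The error term is of strictly smaller order:
\begin{itemize}
\item $-1-2\sigma<2\gamma-2\sigma-1$ because $\gamma>0$;
\item $-2+2\gamma<2\gamma-2\sigma-1$ because $\sigma<1/2$.
\end{itemize}
Hence $\Delta^\sigma\varphi(x)\gtrsim |x|^{2\gamma-2\sigma-1}$ for large $|x|$. Dividing by $\varphi(x)\asymp |x|^{2\gamma-1}$ gives $Q(x)\gtrsim |x|^{-2\sigma}$. A more direct alternative is to keep only the terms with $|y|\ge 2|x|$, where $\varphi(x)-\varphi(y)\gtrsim\varphi(x)$; this also yields the order $|x|^{-2\sigma}$.

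\emph{Upper bound.} I use the asymptotics $\varphi(y)=\mathcal A_\alpha^a|y|^{2\gamma-1}\bigl(1+O(|y|^{-1})\bigr)$ for $y\neq0$, with $\varphi(0)$ a bounded constant, and $k_\sigma(z)\lesssim|z|^{-1-2\sigma}$. Write
\[
\varphi(y)=c|y|^{2\gamma-1}+\rho(y),\qquad |\rho(y)|\lesssim (1+|y|)^{2\gamma-2}.
\]
I then bound $|\varphi(x)-\varphi(y)|$ by three pieces:
\begin{itemize}
\item the main power-difference term $c\,\bigl||x|^{2\gamma-1}-|y|^{2\gamma-1}\bigr|$;
\item $|\rho(x)|+|\rho(y)|$;
\item the $y=0$ term, handled separately.
\end{itemize}
These are controlled as follows.
\begin{itemize}
\item If $\gamma\in(0,1/2)$, Lemma~\ref{lem: power difference estimate} with $\eta=\gamma$ bounds the main term by $|x|^{2\gamma-2\sigma-1}$.
\item The $\rho(y)$ part is bounded by Lemma~\ref{lem: fractional convolution estimate} with $1+a'=2-2\gamma\in(1,2]$, i.e.\ $a'=1-2\gamma\in(0,1]$. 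This gives $|x|^{-1-\min\{2\sigma,1-2\gamma\}}$, which is at most $|x|^{2\gamma-2\sigma-1}$ since $\gamma<1/2$ and $\gamma>0$.
\item The $\rho(x)$ part is at most $|x|^{2\gamma-2}\|k_\sigma\|_1$, which is of lower order.
\item The $y=0$ term is $O(|x|^{-1-2\sigma})$.
\end{itemize}
Dividing by $\varphi(x)\asymp|x|^{2\gamma-1}$ yields $Q(x)\lesssim|x|^{-2\sigma}$.

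\emph{Main obstacle.} I expect the delicate step to be the bookkeeping in the upper bound: the power-difference lemma applies to the exact power $|y|^{2\gamma-1}$ rather than to $k_{-\alpha}^a$, so the $O(|y|^{-1})$ correction has to be transferred through the convolution estimate. The point to check carefully is that each error term is genuinely of order at most $|x|^{2\gamma-2\sigma-1}$. This relies on $0<\gamma<1/2$, which is guaranteed by Lemma~\ref{lem:asymptotics of kalpha}, since $\gamma\ge\alpha>\sigma>0$.
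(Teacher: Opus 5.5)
Your proof is correct. The upper bound is essentially the paper's own argument, which likewise splits $k_{-\alpha}^a$ into the pure power $\mathcal A_\alpha^a|y|^{2\gamma-1}$ plus an $O((1+|y|)^{2\gamma-2})$ remainder and applies Lemma~\ref{lem: power difference estimate} with $\eta=\gamma$. You spell out the remainder bookkeeping through Lemma~\ref{lem: fractional convolution estimate}, which the paper leaves implicit. One small slip there: the comparison $|x|^{-1-\min\{2\sigma,1-2\gamma\}}\lesssim|x|^{2\gamma-2\sigma-1}$ rests on $\gamma>0$ and $\sigma<1/2$, not on $\gamma<1/2$ (you state this correctly in the lower-bound part).

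The lower bound is where you genuinely differ. The paper counts directly. For large $x>0$ and $x/2\le y\le 3x/4$, the difference $|k_{-\alpha}(x)^a-k_{-\alpha}(y)^a|$ is $\gtrsim x^{-\beta}$, and $k_\sigma(x-y)\gtrsim x^{-1-2\sigma}$. There are $\asymp x$ such $y$, which gives the order $x^{-\beta-2\sigma}$.

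You instead drop the absolute values, obtaining $Q(x)\ge \Delta^\sigma k_{-\alpha}^a(x)/k_{-\alpha}^a(x)$; this is the pointwise inequality $w_h\le\mu_h$ of Remark~\ref{rem: reference measure}(c). You then identify the leading order $|x|^{2\gamma-2\sigma-1}$ via Lemma~\ref{lem:asymptotics of kalpha} and the exact identity $\Delta^\sigma k_{-\gamma}=k_{\sigma-\gamma}$, and you check correctly that the error term is of lower order.

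Your route shows transparently that the reference measure and the Hardy weight have the same order, essentially anticipating Lemma~\ref{lem: fractional quotient asymptotics}. The price is that it leans on the nontrivial closed-form identity of Lemma~\ref{lem: fractional superharmonic functions}. The paper's count uses only the asymptotics of $k_{-\alpha}$ and $k_\sigma$, so it would work for any positive function with power-type behaviour at infinity.

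Your parenthetical alternative, keeping only $|y|\ge M|x|$, is of the same elementary kind as the paper's argument. With $M$ chosen large it even needs only the two-sided bound $k_{-\alpha}^a\asymp(1+|x|)^{-\beta}$. By contrast, the paper's intermediate-range count and your $M=2$ version both implicitly need the sharper expansion $k_{-\alpha}(y)^a=\mathcal A_\alpha^a|y|^{-\beta}(1+O(|y|^{-1}))$.
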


\begin{proof}
Put
\[
    \beta:=a(1-2\alpha).
\]
Then \(\beta\in(0,1)\). By \eqref{asymptotics of k},
\begin{equation}\label{estimates of k^a}
    k_{-\alpha}(x)^a\asymp (1+|x|)^{-\beta},
    \qquad x\in\Z.
\end{equation}

We first prove the upper bound. It is enough to consider \(|x|\geq2\); the
remaining values of \(x\) are harmless. By symmetry, assume \(x\geq2\). Using \eqref{estimates of k^a}, the estimate for the remainder in
Lemma~\ref{lem:asymptotics of kalpha}, and
Lemma~\ref{lem: power difference estimate}, with
\[
    2\eta-1=-\beta,
\]
we obtain
\[
\begin{aligned}
    \sum_{y\in\Z}
    \left|
        k_{-\alpha}(x)^a-k_{-\alpha}(y)^a
    \right|
    k_\sigma(x-y)
    &\lesssim
    x^{-\beta-2\sigma}.
\end{aligned}
\]
Since \(k_{-\alpha}(x)^a\asymp x^{-\beta}\), this gives
\[
    \frac{1}{k_{-\alpha}(x)^a}
    \sum_{y\in\Z}
    \left|
        k_{-\alpha}(x)^a-k_{-\alpha}(y)^a
    \right|
    k_\sigma(x-y)
    \lesssim
    x^{-2\sigma}.
\]

For the lower bound, assume again that \(x\) is large and positive. If
\(x/2\leq y\leq 3x/4\), then \eqref{estimates of k^a} implies
\[
    \left|
        k_{-\alpha}(x)^a-k_{-\alpha}(y)^a
    \right|
    \gtrsim x^{-\beta}.
\]
Moreover, \(|x-y|\asymp x\), and hence
\[
    k_\sigma(x-y)\gtrsim x^{-1-2\sigma}.
\]
There are \(\asymp x\) such values of \(y\), so
\[
    \sum_{y\in\Z}
    \left|
        k_{-\alpha}(x)^a-k_{-\alpha}(y)^a
    \right|
    k_\sigma(x-y)
    \gtrsim
    x\cdot x^{-\beta}\cdot x^{-1-2\sigma}
    =
    x^{-\beta-2\sigma}.
\]
Dividing by \(k_{-\alpha}(x)^a\asymp x^{-\beta}\), we obtain the lower bound
\(x^{-2\sigma}\). By symmetry, the same argument applies for negative \(x\).
The finitely many remaining values of \(x\) are absorbed into the constants.
This proves the claim.
\end{proof}

\begin{lemma}[Quotient asymptotics]
\label{lem: fractional quotient asymptotics}
Let \(\sigma\in(0,1/2)\), \(\alpha\in(\sigma,1/2)\), and
\(a\in(0,1]\). Define \(\gamma\) by
\[
    1-2\gamma=a(1-2\alpha).
\]
Then \(\gamma\in(\sigma,1/2)\), and, as \(|x|\to\infty\),
\[
    \frac{\Delta^\sigma k_{-\alpha}^{a}(x)}
         {k_{-\alpha}^{a}(x)}
    =
    \frac{\mathcal A_{\gamma-\sigma}}{\mathcal A_\gamma}
    \frac{1}{|x|^{2\sigma}}
    +
    O\left(
        \frac{1}
        {|x|^{2\sigma+1-\max\{2\sigma,1-2\gamma\}}}
    \right).
\]
\end{lemma}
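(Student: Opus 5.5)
We combine Lemma~\ref{lem:asymptotics of kalpha} with Lemma~\ref{lem: fractional superharmonic functions}, using the latter with $\gamma$ in place of $\alpha$. We first check that $\gamma\in(\sigma,1/2)$. Since $a\in(0,1]$ and $1-2\alpha\in(0,1)$, we have $1-2\gamma=a(1-2\alpha)\in(0,1-2\alpha]$. Hence $\gamma\in[\alpha,1/2)$, and $\gamma>\sigma$ because $\alpha>\sigma$. In particular Lemma~\ref{lem: fractional superharmonic functions} applies to $\gamma$ and gives
\[
\Delta^\sigma k_{-\gamma}=k_{\sigma-\gamma}.
\]

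For the numerator, we multiply \eqref{asymptotics of Lk^a} by $\mathcal A_\alpha^a$. This gives
\[
\Delta^\sigma k_{-\alpha}^a(x)=\frac{\mathcal A_\alpha^a}{\mathcal A_\gamma}k_{\sigma-\gamma}(x)+O\bigl(|x|^{-1-m}\bigr),\qquad m:=\min\{2\sigma,1-2\gamma\}.
\]
Since $\sigma-\gamma\in(-1/2,0)$, the gamma-ratio asymptotics used for \eqref{asymptotics of k} (with $\gamma-\sigma$ in place of $\alpha$) yield
\[
k_{\sigma-\gamma}(x)=\mathcal A_{\gamma-\sigma}|x|^{2\gamma-2\sigma-1}\bigl(1+O(|x|^{-1})\bigr).
\]
For the denominator, \eqref{asymptotics of k} raised to the power $a$ gives
\[
k_{-\alpha}(x)^a=\mathcal A_\alpha^a|x|^{2\gamma-1}\bigl(1+O(|x|^{-1})\bigr),
\]
because $a(2\alpha-1)=2\gamma-1$.

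Dividing, the main term is
\[
\frac{\mathcal A_{\gamma-\sigma}}{\mathcal A_\gamma}|x|^{-2\sigma}\bigl(1+O(|x|^{-1})\bigr).
\]
The error $O(|x|^{-2\sigma-1})$ from this factor is dominated by the target error, since $m\le 2\sigma$ implies $2\sigma+1-\max\{2\sigma,1-2\gamma\}\le 2\sigma+1$.

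The remaining term is the $\Delta^\sigma$-remainder divided by the denominator:
\[
O\bigl(|x|^{-1-m}\bigr)\cdot |x|^{1-2\gamma}=O\bigl(|x|^{-m-2\gamma}\bigr).
\]
It remains to compare the exponent $m+2\gamma$ with the target exponent $2\sigma+1-\max\{2\sigma,1-2\gamma\}$. We use the identity $\min\{A,B\}=A+B-\max\{A,B\}$ with $A=2\sigma$ and $B=1-2\gamma$, which gives
\[
m+2\gamma=2\sigma+1-\max\{2\sigma,1-2\gamma\}.
\]
So this term has exactly the claimed order, and summing the two contributions proves the lemma.

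The main obstacle is the control of $\Delta^\sigma$ applied to the remainder $r$, i.e.\ the second estimate \eqref{asymptotics of Lk^a}. This is already provided by Lemma~\ref{lem:asymptotics of kalpha}, so the proof reduces to the bookkeeping above. The only points to verify are that the expansions are uniform in $|x|$, which holds by the gamma-function asymptotics, and that $k_{-\alpha}^a$ does not vanish, which holds because $k_{-\alpha}$ is strictly positive.
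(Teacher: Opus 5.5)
Your proposal is correct and follows the paper's proof. The paper likewise takes the two expansions from Lemma~\ref{lem:asymptotics of kalpha}, writes the leading term of $\Delta^\sigma k_{-\alpha}^a$ as $\frac{\mathcal A_\alpha^a}{\mathcal A_\gamma}\mathcal A_{\gamma-\sigma}|x|^{2\gamma-2\sigma-1}$ (implicitly via $\Delta^\sigma k_{-\gamma}=k_{\sigma-\gamma}$ and the gamma-ratio asymptotics) and divides, while you additionally make explicit the exponent bookkeeping $\min\{2\sigma,1-2\gamma\}+2\gamma=2\sigma+1-\max\{2\sigma,1-2\gamma\}$ that the paper leaves to the reader.
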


\begin{proof}
The inclusion \(\gamma\in(\sigma,1/2)\) follows from
\(a\in(0,1]\) and \(\alpha\in(\sigma,1/2)\). By
Lemma~\ref{lem:asymptotics of kalpha},
\[
    k_{-\alpha}(x)^a
    =
    \mathcal A_\alpha^a |x|^{-1+2\gamma}
    +
    O\left(|x|^{-2+2\gamma}\right),
\]
and
\[
    \Delta^\sigma k_{-\alpha}^{a}(x)
    =
    \frac{\mathcal A_\alpha^a}{\mathcal A_\gamma}
    \mathcal A_{\gamma-\sigma}
    |x|^{-1-2\sigma+2\gamma}
    +
    O\left(
        |x|^{-1-\min\{2\sigma,1-2\gamma\}}
    \right).
\]
Dividing the second asymptotic formula by the first one gives the claim.
\end{proof}

\begin{proposition}[Reference measure and asymptotics]
\label{prop: fractional reference measure and asymptotics}
Let \(p\in(1,\infty)\), \(\sigma\in(0,1/2)\), and
\(\alpha\in(\sigma,1/2)\). Let \(\mu_{p,\sigma,\alpha}\) and
\(w_{p,\sigma,\alpha}\) be defined as in
Proposition~\ref{prop: fractional Hardy weights}. Define \(\gamma_p\) by
\[
    1-2\gamma_p=a_p(1-2\alpha).
\]
Then
\[
    \mu_{p,\sigma,\alpha}(x)\asymp (1+|x|)^{-2\sigma},
    \qquad x\in\Z.
\]
Moreover, as \(|x|\to\infty\),
\[
    w_{p,\sigma,\alpha}(x)
    =
    \frac{\Psi(p,\sigma,\alpha)}{|x|^{2\sigma}}
    +
    O\left(
        \frac{1}
        {|x|^{2\sigma+1-\max\{2\sigma,1-2\alpha\}}}
    \right),
\]
where
\[
    \Psi(p,\sigma,\alpha)
    =
    \frac{1}{a_p+1}
    \frac{\mathcal A_{\gamma_p-\sigma}}{\mathcal A_{\gamma_p}}
    +
    \frac{a_p}{a_p+1}
    \frac{\mathcal A_{\alpha-\sigma}}{\mathcal A_{\alpha}}.
\]
\end{proposition}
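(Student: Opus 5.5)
The plan is to assemble the statement from the preceding lemmas. Both parts follow by specializing Lemmas~\ref{lem: fractional reference measure estimate} and~\ref{lem: fractional quotient asymptotics} to $a=1$ and $a=a_p$, and taking the convex combination with weights $a_p/(a_p+1)$ and $1/(a_p+1)$.

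\emph{Reference measure.} By definition, $\mu_{p,\sigma,\alpha}=\nu_{k_{-\alpha}}$. By \eqref{abstract one-function reference measure}, this equals
\[
\frac{a_p}{a_p+1}\,T_1(x)+\frac{1}{a_p+1}\,T_{a_p}(x),
\qquad
T_a(x):=\frac{1}{k_{-\alpha}(x)^a}\sum_{y\in\Z}\bigl|k_{-\alpha}(x)^a-k_{-\alpha}(y)^a\bigr|k_\sigma(x-y).
\]
Lemma~\ref{lem: fractional reference measure estimate} applies with $a=1$ and with $a=a_p\in(0,1]$. It gives $T_1(x)\asymp(1+|x|)^{-2\sigma}$ and $T_{a_p}(x)\asymp(1+|x|)^{-2\sigma}$. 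Both coefficients are positive and depend only on $p$, so the positive combination satisfies the same two-sided bound. This proves $\mu_{p,\sigma,\alpha}(x)\asymp(1+|x|)^{-2\sigma}$.

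\emph{Asymptotics.} Next apply Lemma~\ref{lem: fractional quotient asymptotics} twice.
\begin{itemize}
\item With $a=1$ we have $\gamma=\alpha$. The quotient $\Delta^\sigma k_{-\alpha}/k_{-\alpha}$ equals $\frac{\mathcal A_{\alpha-\sigma}}{\mathcal A_\alpha}|x|^{-2\sigma}$ plus an error $O(|x|^{-2\sigma-1+\max\{2\sigma,1-2\alpha\}})$.
\item With $a=a_p$ we have $\gamma=\gamma_p$. This gives leading term $\frac{\mathcal A_{\gamma_p-\sigma}}{\mathcal A_{\gamma_p}}|x|^{-2\sigma}$ with error $O(|x|^{-2\sigma-1+\max\{2\sigma,1-2\gamma_p\}})$.
\end{itemize}
Inserting both into \eqref{fractional Hardy weight} produces the leading coefficient $\Psi(p,\sigma,\alpha)$.

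The only point needing care is to compare the two error exponents. Since $a_p\le1$, we have $1-2\gamma_p=a_p(1-2\alpha)\le 1-2\alpha$. Hence
\[
\max\{2\sigma,1-2\gamma_p\}\le\max\{2\sigma,1-2\alpha\},
\]
so the $a=a_p$ error is dominated by the $a=1$ error for large $|x|$. The combined remainder is therefore $O\bigl(|x|^{-(2\sigma+1-\max\{2\sigma,1-2\alpha\})}\bigr)$, as claimed.

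I expect no real obstacle. The only check is that $\gamma_p$ here coincides with the $\gamma$ of Lemma~\ref{lem: fractional quotient asymptotics}. This holds since $1-2\gamma_p=a_p(1-2\alpha)$ is exactly the defining relation, and it is also consistent with \eqref{fractional gamma p}. Lemma~\ref{lem: fractional quotient asymptotics} already yields $\gamma_p\in(\sigma,1/2)$, so the constants $\mathcal A_{\gamma_p}$ and $\mathcal A_{\gamma_p-\sigma}$ are well defined and positive.
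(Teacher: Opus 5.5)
Your proposal is correct and follows the paper's proof essentially step for step. The two-sided bound for $\mu_{p,\sigma,\alpha}$ comes from Lemma~\ref{lem: fractional reference measure estimate} with $a=1$ and $a=a_p$, and the asymptotics come from Lemma~\ref{lem: fractional quotient asymptotics} with the same two values, using $1-2\gamma_p=a_p(1-2\alpha)\le 1-2\alpha$ to absorb the $a=a_p$ error term into the $a=1$ error term.
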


\begin{proof}
We first identify the reference measure. By the definition of
\(\mu_{p,\sigma,\alpha}\) and \eqref{abstract one-function reference measure},
\[
\begin{aligned}
    \mu_{p,\sigma,\alpha}(x)
    &=
    \frac{1}{a_p+1}
    \frac{1}{k_{-\alpha}(x)^{a_p}}
    \sum_{y\in\Z}
    \left|
        k_{-\alpha}(x)^{a_p}
        -
        k_{-\alpha}(y)^{a_p}
    \right|
    k_\sigma(x-y)
    \\
    &\quad
    +
    \frac{a_p}{a_p+1}
    \frac{1}{k_{-\alpha}(x)}
    \sum_{y\in\Z}
    \left|
        k_{-\alpha}(x)-k_{-\alpha}(y)
    \right|
    k_\sigma(x-y).
\end{aligned}
\]
Applying Lemma~\ref{lem: fractional reference measure estimate}, first with
\(a=a_p\) and then with \(a=1\), gives
\[
    \mu_{p,\sigma,\alpha}(x)
    \asymp
    (1+|x|)^{-2\sigma},
    \qquad x\in\Z.
\]

It remains to prove the asymptotic formula for \(w_{p,\sigma,\alpha}\).
Since \(a_p\in(0,1]\) and \(\alpha\in(\sigma,1/2)\), the number
\(\gamma_p\) satisfies
\[
    \sigma<\alpha\leq \gamma_p<\frac12.
\]
By Lemma~\ref{lem: fractional quotient asymptotics}, applied with \(a=1\),
we have
\[
    \frac{\Delta^\sigma k_{-\alpha}(x)}
         {k_{-\alpha}(x)}
    =
    \frac{\mathcal A_{\alpha-\sigma}}{\mathcal A_{\alpha}}
    \frac{1}{|x|^{2\sigma}}
    +
    O\left(
        \frac{1}
        {|x|^{2\sigma+1-\max\{2\sigma,1-2\alpha\}}}
    \right).
\]
Applying the same lemma with \(a=a_p\), so that the corresponding parameter is
\(\gamma_p\), gives
\[
    \frac{\Delta^\sigma k_{-\alpha}^{a_p}(x)}
         {k_{-\alpha}^{a_p}(x)}
    =
    \frac{\mathcal A_{\gamma_p-\sigma}}{\mathcal A_{\gamma_p}}
    \frac{1}{|x|^{2\sigma}}
    +
    O\left(
        \frac{1}
        {|x|^{2\sigma+1-\max\{2\sigma,1-2\gamma_p\}}}
    \right).
\]
Since
\[
    1-2\gamma_p=a_p(1-2\alpha)\leq 1-2\alpha,
\]
we have
\[
    \max\{2\sigma,1-2\gamma_p\}
    \leq
    \max\{2\sigma,1-2\alpha\}.
\]
Therefore the second error term is dominated by
\[
    O\left(
        \frac{1}
        {|x|^{2\sigma+1-\max\{2\sigma,1-2\alpha\}}}
    \right).
\]
Combining the last two asymptotic formulae with
\eqref{fractional Hardy weight} gives the asserted asymptotic formula for
\(w_{p,\sigma,\alpha}\).
\end{proof}

\subsection{Criticality}
\label{subsec: fractional criticality}
In this subsection, we determine the range of the parameter 
$\alpha$ for which the Hardy weights obtained in Proposition \ref{prop: fractional Hardy weights} are critical. To do so, we first establish some preliminary statements. Henceforth, we assume $p \in (1, \infty)$, $\sigma \in (0, 1/2)$, and $\alpha \in (\sigma, 1/2)$.

Let $v : \Z \rightarrow \R$ and $ \mathcal{M}_{p, \sigma, \alpha}(v)$ be the form given by
\begin{equation}\label{M form}
    \mathcal{M}_{p, \sigma, \alpha}(v) := \sum_{x \in \Z}\sum_{y \in \Z} \left(v(x) - v(y)\right)^2 k_{-\alpha}(x)^{r_p}k_{-\alpha}(y)^{q_p}
    k_\sigma (x-y).
\end{equation}
The next result shows that boundedness alone is sufficient to deduce convergence of the form \eqref{M form}. This is well-known for $p=2$ \cite{MR4612316}, we extend it to $p > 1$.

\begin{lemma}\label{lem: boundedness implies convergence to zero}
Let $p \in (1, \infty)$ and $N \in \N$. Let $0 \leq \xi_N(x) \leq 1$ be a sequence of finitely supported functions on $\Z$, such that 
$$\xi_N (x)  \rightarrow 1, \quad x \in \Z, \quad \operatorname{and} \quad \sup_{N \in \N} \mathcal{M}_{p, \sigma, \alpha}(\xi_N) < \infty.$$ 
Then there exists a finitely supported sequence $0 \leq e_N (x) \leq 1$, converging pointwise to $1$, such that 
$$
\mathcal{M}_{p, \sigma, \alpha} (e_N) \rightarrow 0 \quad \text{as} \quad N \rightarrow \infty.
$$
\end{lemma}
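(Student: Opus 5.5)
The plan is the standard Hilbert-space argument: weak compactness combined with Mazur's lemma (Banach--Saks). The form $\mathcal{M}:=\mathcal{M}_{p,\sigma,\alpha}$ is a nonnegative quadratic form. Write $\mathcal{M}(v)=\sum_{x,y}(v(x)-v(y))^2\,m(x,y)$ with the nonnegative weight
\[
m(x,y):=k_{-\alpha}(x)^{r_p}k_{-\alpha}(y)^{q_p}k_\sigma(x-y).
\]
Then $\mathcal{M}(v)=\|Dv\|^2_{\ell^2(\Z\times\Z,m)}$, where $Dv(x,y):=v(x)-v(y)$. Note that $m$ is not symmetric when $p\neq2$, but this is irrelevant because we simply work in the weighted $\ell^2$ space on $\Z\times\Z$ with measure $m$. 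Consider the bounded sequence $\Xi_N:=D\xi_N$ in the Hilbert space $H:=\ell^2(\Z\times\Z,m)$. By the assumption $\sup_N\mathcal M(\xi_N)<\infty$ and weak sequential compactness of balls in $H$, a subsequence $\Xi_{N_j}$ converges weakly in $H$ to some $\Xi$.

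The next step identifies $\Xi=0$. For each fixed pair $(x,y)$ with $m(x,y)>0$ (which holds for all $x\neq y$ since $k_{-\alpha}>0$ and $k_\sigma>0$ off the diagonal), the evaluation $\Xi\mapsto\Xi(x,y)$ is a continuous linear functional on $H$, namely the pairing with $\mathbf 1_{(x,y)}/m(x,y)$. Hence $\Xi(x,y)=\lim_j(\xi_{N_j}(x)-\xi_{N_j}(y))=1-1=0$, using pointwise convergence $\xi_N\to1$. On the diagonal $D\xi$ vanishes and $m=0$ there anyway. So $\Xi_{N_j}\rightharpoonup0$ weakly in $H$.

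By Mazur's lemma, there are finite convex combinations of the tail sequence that converge strongly to $0$. Concretely, for each $n$ we choose indices $j\geq n$ and weights $\lambda^{(n)}_j\ge0$ with $\sum_j\lambda^{(n)}_j=1$, only finitely many nonzero, such that $\bigl\|\sum_j\lambda^{(n)}_j\Xi_{N_j}\bigr\|_H<1/n$. Set $e_n:=\sum_{j\ge n}\lambda^{(n)}_j\xi_{N_j}$. Then $e_n$ is finitely supported, being a finite combination of finitely supported functions. Since the weights are convex and each $0\le\xi_N\le1$, also $0\le e_n\le1$. By linearity of $D$, $\mathcal M(e_n)=\|De_n\|_H^2<1/n^2\to0$.

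It remains to check pointwise convergence $e_n\to1$. This is the only slightly delicate point, and it is handled by using tails. Fix $x$ and $\varepsilon>0$, and take $n_0$ such that $|\xi_{N_j}(x)-1|<\varepsilon$ for all $j\ge n_0$. For $n\ge n_0$, all indices used in $e_n$ satisfy $j\ge n\ge n_0$, so $|e_n(x)-1|\le\sum_j\lambda^{(n)}_j|\xi_{N_j}(x)-1|<\varepsilon$. Relabeling $n$ as $N$ gives the desired sequence. I expect no real obstacle; the only care needed is ensuring that Mazur's combinations are taken from tails, so that pointwise convergence is preserved, and that the weak limit is identified through the coordinate functionals.
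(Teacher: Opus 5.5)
Your argument is correct and is essentially the paper's proof. Both pass to a weakly convergent subsequence of a bounded sequence in a Hilbert space, identify the weak limit through pointwise convergence, and apply Mazur's lemma. The differences are technical:
\begin{itemize}
\item You work directly in the gradient space $\ell^2(\Z\times\Z,m)$, which is complete for free, and you obtain pointwise convergence of $e_N$ by taking the convex combinations from tails.
\item The paper builds the function space with norm $\|u\|_{\ell^2(\Z,\rho)}^2+\mathcal M_{p,\sigma,\alpha}(u)$ and has to prove its completeness. In exchange, it gets pointwise convergence of $e_k$ automatically from strong convergence in $\ell^2(\Z,\rho)$.
\end{itemize}
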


\begin{proof}
Let
\[
\rho(n):=(1\vee |n|)^{-2},
\]
and define
\[
X
:=
\left\{
u\in\ell^2(\Z,\rho):
\mathcal M_{p,\sigma,\alpha}(u)<\infty
\right\},
\]
equipped with the norm
\[
\|u\|_X^2
:=
\|u\|_{\ell^2(\Z,\rho)}^2
+
\mathcal M_{p,\sigma,\alpha}(u).
\]
We first show that $(X,\|\cdot\|_X)$ is a Hilbert space. It is enough
to prove completeness, the other properties being immediate.

Suppose that $(u_N)_{N\in\N}$ is a Cauchy sequence in $X$. Then
$(u_N)$ is Cauchy in $\ell^2(\Z,\rho)$, and hence converges strongly
to some $u\in\ell^2(\Z,\rho)$. In particular,
\[
u_N(x)\longrightarrow u(x),
\qquad x\in\Z.
\]
Observe that
\[
\sqrt{\mathcal M_{p,\sigma,\alpha}(u_N)}
=
\|U_N\|_{\ell^2(\Z^2,j)},
\]
where
\[
U_N(x,y):=u_N(x)-u_N(y)
\]
and
\[
j(x,y)
:=
 k_{-\alpha}(x)^{r_p}k_{-\alpha}(y)^{q_p}
    k_\sigma (x-y).
\]
Since $(u_N)$ is Cauchy in $X$, the sequence $(U_N)$ is Cauchy in
$\ell^2(\Z^2,j)$, and therefore converges strongly to some
$U\in\ell^2(\Z^2,j)$. For every $x\neq y$, this convergence implies
pointwise convergence, and hence
\[
U(x,y)
=
\lim_{N\to\infty}U_N(x,y)
=
u(x)-u(y).
\]
Thus
\[
\mathcal M_{p,\sigma,\alpha}(u)<\infty,
\]
and $u_N\to u$ in $X$. This proves that $X$ is complete.

Now by Banach–Alaoglu and Eberlein–Šmulian theorems there exists a subsequence $(\xi_{N_k})_{k \in \N}$ convergent weakly to some $\xi$ in $X$. This implies that $(\xi_{N_k})$ converges pointwise to $\xi$, and hence $\xi = 1$. By Mazur's lemma there exists a sequence $(e_k)_{k \in \N}$ strongly convergent to $1$, such that $e_k$ is a convex combination of $(\xi_{N_1},...,\xi_{N_k})$. Now we have
\begin{align*}
    \mathcal{M}_{p, \sigma, \alpha}(e_k) = \mathcal{M}_{p, \sigma, \alpha}(e_k - 1) \xrightarrow[]{k \rightarrow \infty}0.
\end{align*}
\end{proof}

Next, we construct an explicit sequence $\xi_N$ for which the form \eqref{M form} remains uniformly bounded.
\begin{lemma}\label{lem: elementary beta sum}
Let $r, s \in (0, 1)$ and $N \in \N$. Then 
\begin{equation}
\sum_{j=1}^N
\frac{1}{j^r(N+1-j)^s}
\lesssim
N^{1-r-s}.
\end{equation}
   
\end{lemma}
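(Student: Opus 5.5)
The plan is to split the sum at the midpoint and compare each half with an elementary integral. By the substitution $j\mapsto N+1-j$, the sum is symmetric under exchanging $r$ and $s$. We therefore divide the range $1\le j\le N$ into the two parts $j\le (N+1)/2$ and $j>(N+1)/2$, and only need to handle the first part in general form; the second part is then the same estimate with the roles of $r$ and $s$ exchanged.

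On the first half we have $N+1-j\ge (N+1)/2\geq N/2$, so that
\[
(N+1-j)^{-s}\le 2^{s}N^{-s}.
\]
The remaining sum is controlled by comparison with $\int_0^{N}t^{-r}\,dt$; this is legitimate because $t\mapsto t^{-r}$ is decreasing and $r<1$. Hence
\[
\sum_{1\le j\le (N+1)/2} j^{-r}\le \sum_{j=1}^{N} j^{-r}\le \int_0^N t^{-r}\,dt=\frac{N^{1-r}}{1-r}.
\]
Combining the two bounds, the first half is at most $\frac{2^s}{1-r}N^{1-r-s}$.

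On the second half we set $i=N+1-j$, so that $1\le i<(N+1)/2$ and $j\ge (N+1)/2\ge N/2$. Arguing exactly as before, the second half is at most $\frac{2^r}{1-s}N^{1-r-s}$.

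Adding the two halves gives the claim, with implied constant $\frac{2^s}{1-r}+\frac{2^r}{1-s}$, which depends only on $r$ and $s$. There is no genuine obstacle in this argument. The only point that needs care is that the constant blows up as $r\to1$ or $s\to1$, which is why the hypothesis $r,s\in(0,1)$ is needed; the condition $r,s>0$ is in fact not used beyond guaranteeing that the exponents are meaningful.
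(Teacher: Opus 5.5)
Your proof is correct and follows the same route as the paper. The paper also splits at the midpoint, bounds the factor that is $\gtrsim N$ by $N^{-s}$ (resp.\ $N^{-r}$), and sums the remaining power $j^{-r}$ (resp.\ $i^{-s}$); your integral comparison just makes the constant explicit. One small correction: your closing remark is inaccurate, because $r,s>0$ is exactly what makes $t\mapsto t^{-r}$ and $t\mapsto t^{-s}$ decreasing in the steps you use (the bound does remain true for all $r,s<1$, but negative exponents need a slightly different estimate).
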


\begin{proof}
For $N=1$, the claim is immediate. Hence, assume $N\geq 2$. Let
\[
m:=\left\lceil \frac{N}{2}\right\rceil.
\]
Splitting the sum at $j=m$, we obtain
\[
\sum_{j=1}^N
\frac{1}{j^r(N+1-j)^s}
=
\sum_{j=1}^{m}
\frac{1}{j^r(N+1-j)^s}
+
\sum_{j=m+1}^{N}
\frac{1}{j^r(N+1-j)^s}.
\]
For $1\leq j\leq m$, we have $N+1-j\gtrsim N$, and hence
\[
\sum_{j=1}^{m}
\frac{1}{j^r(N+1-j)^s}
\lesssim
N^{-s}\sum_{j=1}^{m}j^{-r}
\lesssim
N^{-s}N^{1-r}
=
N^{1-r-s},
\]
where we used $r\in(0,1)$.

On the other hand, for $m<j\leq N$, we have $j\gtrsim N$. Therefore,
after the change of variables $k=N+1-j$,
\begin{align*}
\sum_{j=m+1}^{N}
\frac{1}{j^r(N+1-j)^s}
&\lesssim
N^{-r}\sum_{j=m+1}^{N}(N+1-j)^{-s}
\\
&=
N^{-r}\sum_{k=1}^{N-m}k^{-s}
\\
&\lesssim
N^{-r}N^{1-s}
=
N^{1-r-s},
\end{align*}
where we used $s\in(0,1)$.

Combining the two estimates proves the claim.
\end{proof}

\begin{lemma}\label{lem: boundedness of M form}
Let $N \in \N$ and define $\xi_N \in C_c(\Z)$ as 
\[
\xi_N(x)
:=
\begin{cases}
1,
    & |x|\leq N,\\[1mm]
2-\dfrac{|x|}{N},
    & N<|x|<2N,\\[2mm]
0,
    & |x|\geq2N.
\end{cases}
\]
Then $0 \leq \xi_N(x) \leq 1$ and converges pointwise to $1$. Moreover, if 
$$
\alpha \leq \frac{2\sigma+a_p}{2(a_p+1)},
$$
then
$$
\sup_{N \in \N} \mathcal{M}_{p, \sigma, \alpha}(\xi_N) < \infty.
$$    
\end{lemma}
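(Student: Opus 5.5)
The properties $0\le\xi_N\le1$ and $\xi_N\to1$ pointwise are immediate. For the uniform bound, I will use \eqref{asymptotics of k}, which gives $k_{-\alpha}(x)\asymp(1+|x|)^{2\alpha-1}$. Since $q_p+r_p=1+a_p$, the weight in \eqref{M form} satisfies
\[
k_{-\alpha}(x)^{r_p}k_{-\alpha}(y)^{q_p}\asymp (1+|x|)^{-r_p\beta_0}(1+|y|)^{-q_p\beta_0},\qquad \beta_0:=1-2\alpha\in(0,1-2\sigma).
\]
The hypothesis $\alpha\le(2\sigma+a_p)/(2(a_p+1))$ is equivalent to $(1+a_p)\beta_0\ge 1-2\sigma$. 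This is the exponent balance that should make the contribution from scale $N$ bounded. By symmetry of $\xi_N$ it suffices to control the pairs with $|x|\le|y|$, at the cost of a constant, since swapping $x,y$ only exchanges $q_p$ and $r_p$. Note also that $\xi_N$ is $1/N$-Lipschitz, so $(\xi_N(x)-\xi_N(y))^2\le \min\{1,|x-y|^2/N^2\}$. Moreover, the difference vanishes unless at least one of $|x|,|y|$ lies in $(N,2N)$, or the pair straddles the annulus.

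I split the double sum into three regions.

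\textbf{(i) Both $|x|,|y|\le 4N$.} I bound the difference by $|x-y|^2N^{-2}$ and the weights by $(1+|x|)^{-r_p\beta_0}(1+|y|)^{-q_p\beta_0}$. At least one point is at distance $\gtrsim N$ from the origin, otherwise the difference vanishes. Summing $|x-y|^{1-2\sigma}N^{-2}$ first over $|x-y|\le 8N$ gives $N^{-2\sigma}$ per point. The remaining sum of the weights over points in $[-4N,4N]$ is then handled by Lemma~\ref{lem: elementary beta sum}-type sums. More crudely, the weights are $\lesssim$ a product of powers whose total homogeneity is $-(1+a_p)\beta_0$, and the region has a factor $N^2$ of points with weight $N^{-2}N^{1-2\sigma}$ per unit of distance. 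This yields a bound of order
\[
N^{2-(1+a_p)\beta_0-2\sigma},
\]
with the singularity at the origin integrable because $r_p\beta_0,q_p\beta_0<1$. I still need to double-check this exponent, since the local difference sum $\sum_{|z|\le N}|z|^{1-2\sigma}\asymp N^{2-2\sigma}$ is multiplied by $N^{-2}$.

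\textbf{(ii) $|x|\le 2N<4N\le |y|$.} Here the difference is at most $1$ and $k_\sigma(x-y)\asymp|y|^{-1-2\sigma}$. Summing $|y|^{-1-2\sigma-q_p\beta_0}$ over $|y|\ge4N$ gives $N^{-2\sigma-q_p\beta_0}$. Summing $(1+|x|)^{-r_p\beta_0}$ over $|x|\le2N$ gives $N^{1-r_p\beta_0}$. The total is again $N^{1-2\sigma-(1+a_p)\beta_0}$.

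\textbf{(iii) Both $|x|,|y|\ge 2N$.} The difference vanishes there.

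So every piece is $\lesssim N^{1-2\sigma-(1+a_p)\beta_0}\le1$ by the hypothesis. I expect (i) to come out with the same exponent after the careful count: per $x$ with $|x|\asymp N$, the inner sum is $\lesssim N^{-2}\sum_{|z|\le N}|z|^{1-2\sigma}\cdot N^{-q_p\beta_0}$, roughly $N^{-2\sigma-q_p\beta_0}$. The points near the origin contribute through long jumps as in (ii).

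The main obstacle is region (i), specifically pairs with one point near the origin and the other in the annulus, where the weights are unequal and the kernel is not small. I will handle it by splitting at $|x|\le N/2$. For such $x$, the difference is nonzero only if $|y|>N$, so $|x-y|\gtrsim|y|$. This reduces to the computation in (ii), while the remaining part has $|x|\asymp|y|\asymp N$ and uses the Lipschitz bound. The estimate near the diagonal is then the same as in Lemma~\ref{lem: power difference estimate}.
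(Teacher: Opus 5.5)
Your argument is correct, but it is organized differently from the paper's. The paper first uses the evenness of $\xi_N$ and $k_{-\alpha}$ and $|x-y|\geq\bigl||x|-|y|\bigr|$ to reduce to $x,y\in\N$, absorbing the terms at the origin into a constant. It then replaces the weights by $x^{-(1-2\alpha)}y^{-(1-2\gamma_p)}$ and splits according to the profile of $\xi_N$: $y\leq N<x$; both points in $(N,2N)$; and $N<y<2N\leq x$. In the first and third cases it bounds the difference by $1$ and sums the kernel tail, $\sum_{x>N}(x-y)^{-1-2\sigma}\lesssim(N+1-y)^{-2\sigma}$. This produces a boundary-layer sum, which is what Lemma~\ref{lem: elementary beta sum} is for; the slope $|x-y|/N$ is used only in the middle case.

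You split by scale instead. The global $1/N$-Lipschitz bound $(\xi_N(x)-\xi_N(y))^2\leq|x-y|^2/N^2$ handles all pairs with $N/2<|x|\leq|y|\leq 4N$ at once. Consequently the transitions at $|x|=N$ and $|x|=2N$ need no separate treatment, and the beta-sum lemma becomes unnecessary. Pairs with one point at scale $\ll N$ or $\gg N$ are handled by the long-jump bound $|x-y|\gtrsim\max\{|x|,|y|\}$ together with the summability of $(1+|x|)^{-r_p\beta_0}$ and $(1+|x|)^{-q_p\beta_0}$ near the origin. Working with $(1+|x|)$-weights on all of $\Z$ also makes the reduction to $\N$ and the separate origin terms unnecessary.

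Both routes give the same exponent, $1-2\sigma-(1+a_p)(1-2\alpha)=2(a_p+1)\alpha-a_p-2\sigma$. Yours is somewhat more robust: it only uses that $\xi_N$ is even, $1/N$-Lipschitz, equal to $1$ on $|x|\leq N$, and $0$ on $|x|\geq 2N$. The paper's version makes the contribution of each piece of the cutoff explicit.

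In a write-up, drop the first ``crude'' count in region (i). As stated it gives $N^{2-(1+a_p)\beta_0-2\sigma}$, which is off by a factor of $N$ and is never uniformly bounded here. What actually yields $N^{1-2\sigma-(1+a_p)\beta_0}$ is the split at $|x|\leq N/2$ versus $N/2<|x|\leq 4N$, with the per-$x$ count.
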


\begin{proof}
It is immediate that $0 \leq \xi_N(x) \leq 1$ and that
$\xi_N(x) \to 1$ pointwise. We next estimate
$\mathcal M_{p,\sigma,\alpha}(\xi_N)$. 

The asymptotics of $k_\sigma$ in \eqref{e.ksigma-asymptotic}, the fact that $\xi_N$ is even, and the reverse triangle inequality inequality
$$
|x-y| \geq \big||x|-|y|\big|,
$$
give
$$
\mathcal{M}_{p, \sigma, \alpha}(\xi_N)  \lesssim \sum_{x, y \in \Z \atop |x| \neq |y|} \big(\xi_N(x) - \xi_N(y)\big)^2 k_{-\alpha}(x)^{r_p}k_{-\alpha}(y)^{q_p} \big||x|-|y|\big|^{-1-2\sigma}.
$$
Since $\xi_N$ and $k_{-\alpha}$ are even, decomposing the sum according to the signs of $x$ and $y$, and using the uniform bound for the terms involving the origin, we obtain
$$
\mathcal{M}_{p, \sigma, \alpha}(\xi_N) \lesssim  \sum_{x, y \in \N \atop x \neq y} \big(\xi_N(x) - \xi_N(y)\big)^2 k_{-\alpha}(x)^{r_p}k_{-\alpha}(y)^{q_p} |x-y|^{-1-2\sigma} + C,
$$
for some positive constant $C$, independent of $N$.

Using the asymptotics of $k_{-\alpha}$ in
\eqref{asymptotics of k}, together with
$\{r_p,q_p\}=\{1,a_p\}$ and the symmetry of the double sum under the interchange of $x$ and $y$, we obtain
\begin{equation}\label{positive M cutoff estimate}
\mathcal{M}_{p, \sigma, \alpha}(\xi_N) \lesssim  \sum_{x, y \in \N \atop x \neq y} \big(\xi_N(x) - \xi_N(y)\big)^2 \frac{1}{x^{1-2\alpha}} \frac{1}{y^{1-2\gamma_p}} |x-y|^{-1-2\sigma} + C,
\end{equation}
where 
$$
\gamma_p := \frac{1-a_p(1-2\alpha)}{2}.
$$
Hence, it is sufficient to bound 
\begin{equation}\label{positive M cutoff}
\sum_{\substack{x,y\in\N\\x\neq y}}
\big(\xi_N(x)-\xi_N(y)\big)^2
\frac{1}{x^{1-2\alpha}}
\frac{1}{y^{1-2\gamma_p}}
\frac{1}{|x-y|^{1+2\sigma}}.
\end{equation}
For convenience, define the parameters
\[
\beta:=1-2\alpha,
\qquad
\delta:=1-2\gamma_p.
\]
We split the sum in \eqref{positive M cutoff} into three regions,
according to the definition of $\xi_N$.

\medskip
\noindent\emph{Case 1: $1\leq y\leq N<x$.}
Since $|\xi_N(x)-\xi_N(y)|\leq1$, Lemma \ref{lem: elementary beta sum} yields
\begin{align*}
\sum_{1\leq y\leq N<x}
(\xi_N(x)-\xi_N(y))^2
\frac{1}{x^\beta y^\delta |x-y|^{1+2\sigma}}
&\lesssim
N^{-\beta}
\sum_{y=1}^N
\frac{1}{y^\delta}
\sum_{x>N}
\frac{1}{(x-y)^{1+2\sigma}}
\\
&\lesssim
N^{-\beta}
\sum_{y=1}^N
\frac{1}{y^\delta}
\frac{1}{(N+1-y)^{2\sigma}} \\
&\lesssim N^{-\beta+1-\delta-2\sigma} = N^{-1 +2(\alpha+\gamma_p-\sigma)}.
\end{align*}
Interchanging $x$ and $y$ gives the same estimate for the opposite
orientation.

\medskip
\noindent\emph{Case 2: $N<x,y<2N$.} In this region
\[
|\xi_N(x)-\xi_N(y)|
=
\frac{|x-y|}{N}.
\]
Therefore
\begin{align*}
\sum_{N<x,y<2N \atop x\neq y}
\big(\xi_N(x)-\xi_N(y)\big)^2
\frac{1}{x^\beta y^\delta |x-y|^{1+2\sigma}}
&= \frac1{N^2}
\sum_{\substack{N<x,y<2N\\x\neq y}}
\frac{|x-y|^{1-2\sigma}}{x^\beta y^\delta}\\
&   \leq N^{-2-\beta-\delta} \sum_{\substack{N<x,y<2N\\x\neq y}} |x-y|^{1-2\sigma}\\
& \lesssim N^{-1 + 2(\alpha+\gamma_p-\sigma)}.
\end{align*}

\medskip
\noindent\emph{Case 3: $N<y<2N\leq x$.} Again $|\xi_N(x)-\xi_N(y)|\leq1$, and arguing as above we get,
\begin{align*}
\sum_{N<y<2N\leq x}
(\xi_N(x)-\xi_N(y))^2
\frac{1}{x^\beta y^\delta |x-y|^{1+2\sigma}}
&\leq
N^{-\beta-\delta}
\sum_{N < y < 2N} 
\sum_{x \geq 2N} \frac{1}{|x-y|^{1+2\sigma}}\\
&\lesssim
N^{-\beta-\delta}
\sum_{N<y<2N}
\frac{1}{(2N-y)^{2\sigma}}\\
& \lesssim 
N^{-\beta-\delta}
\sum_{z=1}^N z^{-2\sigma}
\\
&\lesssim
N^{-\beta-\delta+1-2\sigma} =
N^{-1+2(\alpha+\gamma_p-\sigma)}.
\end{align*}
The opposite orientation is estimated in the same way.

The remaining regions give no contribution, since $\xi_N$ is constant
on $|x|\leq N$ and on $|x|\geq2N$. Combining the preceding estimates along with \eqref{positive M cutoff estimate},
we obtain
\[
\mathcal M_{p,\sigma,\alpha}(\xi_N)
\lesssim
1+N^{-1+2(\alpha+\gamma_p-\sigma)}.
\]
By the definition of $\gamma_p$,
\[
-1+2(\alpha+\gamma_p-\sigma)
=
2(a_p+1)\alpha-a_p-2\sigma.
\]
Hence, if
\[
\alpha\leq\frac{2\sigma+a_p}{2(a_p+1)},
\]
then
\[
-1+2(\alpha+\gamma_p-\sigma)\leq0,
\]
and therefore
\[
\sup_{N\in\N}
\mathcal M_{p,\sigma,\alpha}(\xi_N)<\infty.
\]
\end{proof}

We are now ready to prove the main result of this subsection.
\begin{proposition}[Criticality of the fractional Hardy weights]
\label{prop: criticality of fractional Hardy weights}
Let \(p\in(1,\infty)\), \(\sigma\in(0,1/2)\), and
\(\alpha\in(\sigma,1/2)\). Assume that
\[
    \alpha
    \leq
    \frac{2\sigma+a_p}{2(a_p+1)}.
\]
Then the Hardy weight \(w_{p,\sigma,\alpha}\) defined in
\eqref{fractional Hardy weight} is critical.
\end{proposition}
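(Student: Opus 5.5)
We argue exactly as in the proof of criticality for the Dirichlet Laplacian in Theorem~\ref{thm: critical Hardy weights for Dirichlet Laplacian}, with the logarithmic cutoff replaced by the null sequence produced by Lemmas~\ref{lem: boundedness of M form} and~\ref{lem: boundedness implies convergence to zero}. Suppose that $w\geq w_{p,\sigma,\alpha}$ is a Hardy weight for $\mathcal E_p^{\Z,\sigma}$ on $\Z$. Every $u\in C_c(\Z)$ belongs to $\ell^p(\Z,\mu_{p,\sigma,\alpha})$. Hence the ground-state representation \eqref{fractional ground state representation} of Proposition~\ref{prop: fractional Hardy weights} gives
\[
0\leq\sum_{x\in\Z}\bigl(w(x)-w_{p,\sigma,\alpha}(x)\bigr)|u(x)|^p\leq\mathcal R_{p,\sigma,\alpha}(u),\qquad u\in C_c(\Z).
\]
The remainder vanishes on the ground state $h_{p,\sigma,\alpha}=k_{-\alpha}^{q_p}$. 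It therefore suffices to find $u_N\in C_c(\Z)$ with $u_N\to h_{p,\sigma,\alpha}$ pointwise and $\mathcal R_{p,\sigma,\alpha}(u_N)\to0$. Once we have these, Fatou's lemma yields $\sum_x(w-w_{p,\sigma,\alpha})h_{p,\sigma,\alpha}^p=0$. Since $h_{p,\sigma,\alpha}>0$, this forces $w=w_{p,\sigma,\alpha}$.

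To construct $u_N$, we start from the cutoffs $\xi_N$ of Lemma~\ref{lem: boundedness of M form}. Under the assumption $\alpha\leq(2\sigma+a_p)/(2(a_p+1))$, they satisfy $\sup_N\mathcal M_{p,\sigma,\alpha}(\xi_N)<\infty$. Lemma~\ref{lem: boundedness implies convergence to zero} then supplies finitely supported $0\leq e_N\leq1$ with $e_N\to1$ pointwise and $\mathcal M_{p,\sigma,\alpha}(e_N)\to0$. We set
\[
u_N:=h_{p,\sigma,\alpha}\,e_N^{\langle 2/p\rangle}=k_{-\alpha}^{q_p}e_N^{2/p}.
\]
Then $u_N\in C_c(\Z)$ and $u_N\to h_{p,\sigma,\alpha}$ pointwise. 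Moreover $u_N/k_{-\alpha}^{q_p}=e_N^{\langle2/p\rangle}$. By Lemma~\ref{lem: from Fp to F2},
\[
F_p\Bigl(\frac{u_N(x)}{k_{-\alpha}(x)^{q_p}},\frac{u_N(y)}{k_{-\alpha}(y)^{q_p}}\Bigr)\leq 2p\bigl(e_N(x)-e_N(y)\bigr)^2.
\]
Inserting this into \eqref{fractional ground state remainder} gives
\[
\mathcal R_{p,\sigma,\alpha}(u_N)\leq 2\,\mathcal M_{p,\sigma,\alpha}(e_N)\to0,
\]
because the weights $k_{-\alpha}(x)^{r_p}k_{-\alpha}(y)^{q_p}k_\sigma(x-y)$ in $\mathcal R_{p,\sigma,\alpha}$ and in \eqref{M form} coincide.

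The substantive work lies in the two lemmas. The uniform bound on $\mathcal M_{p,\sigma,\alpha}(\xi_N)$ is exactly where the critical range \eqref{fractional Hardy critical range} is used. The Mazur-type upgrade from boundedness to convergence to zero replaces the explicit logarithmic cutoff available in the local case. Given these, the remaining points to check are routine:

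\begin{enumerate}
\item[(i)] The convex combinations $e_N$ are still finitely supported with values in $[0,1]$, which holds since each $\xi_N$ has these properties.
\item[(ii)] The Fatou step is legitimate, since $(w-w_{p,\sigma,\alpha})|u_N|^p\geq0$.
\end{enumerate}

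The main obstacle, namely controlling nonlocal interactions in the cutoff energy, has already been isolated in Lemma~\ref{lem: boundedness of M form}.
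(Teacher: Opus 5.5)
Your proposal is correct and follows the paper's proof essentially step for step. You use the same inequality $0\le\sum_{x}(w-w_{p,\sigma,\alpha})|u|^p\le\mathcal R_{p,\sigma,\alpha}(u)$ on $C_c(\Z)$, the same null sequence $e_N$ from Lemmas~\ref{lem: boundedness of M form} and~\ref{lem: boundedness implies convergence to zero}, the same test functions $u_N=h_{p,\sigma,\alpha}e_N^{\langle 2/p\rangle}$ with $\mathcal R_{p,\sigma,\alpha}(u_N)\le 2\mathcal M_{p,\sigma,\alpha}(e_N)$ via Lemma~\ref{lem: from Fp to F2}, and the same concluding Fatou argument.
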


\begin{proof}
Fix $p \in (1, \infty), \, \sigma \in (0, 1/2)$, and
$$
\alpha \in \left(\sigma, \; \frac{2\sigma+a_p}{2(a_p+1)}\right].
$$
Suppose that $w$ is a Hardy weight for
\(\mathcal E_p^{\Z,\sigma}\) such that
\[
    w(x)\geq w_{p,\sigma,\alpha}(x),
    \qquad x\in\Z.
\]
Let
\[
h_{p, \sigma, \alpha}(x)
:=
k_{-\alpha}(x)^{q_p}.
\]
By the ground-state representation
\eqref{fractional ground state representation}, for every
$u\in C_c(\Z)$, we have
\begin{equation}\label{remainder estimate for fractional Laplacian}
0
\leq
\sum_{x\in\Z}
\bigl( w(x)-w_{p,\sigma,\alpha}(x)\bigr)|u(x)|^p
\leq
\mathcal R_{p,\sigma,\alpha}(u).    
\end{equation}
Formally,
\[
\mathcal R_{p,\sigma,\alpha}(h_{p, \sigma, \alpha})=0.
\]
However, $h_{p, \sigma, \alpha}$ is not finitely supported, so we approximate it by a suitable sequence of finitely supported functions. 

Lemma \ref{lem: boundedness of M form}, together with Lemma
\ref{lem: boundedness implies convergence to zero}, yields a finitely supported
sequence $0 \leq e_N \leq 1$ such that
\[
e_N(x) \to 1
\qquad \text{and} \qquad
\mathcal M_{p,\sigma,\alpha}(e_N) \to 0,
\]
as $N \to \infty$.

Define 
$$u_N (x) := h_{p, \sigma, \alpha}(x) e_N(x)^{\langle 2/p \rangle}.$$
Then $u_N \rightarrow h_{p, \sigma, \alpha}$ pointwise. By Lemma \ref{lem: from Fp to F2},
$$
\mathcal{R}_{p, \sigma, \alpha}(u_N) \leq 2 \mathcal{M}_{p, \sigma, \alpha}(e_N).
$$
Combining this with \eqref{remainder estimate for fractional Laplacian} and Fatou's lemma, we get
\begin{align*}
    0 \leq \sum_{x \in \Z} (w(x) - w_{p, \sigma, \alpha}(x))|h_{p, \sigma, \alpha}(x)|^p &\leq \liminf_{N \rightarrow \infty} \sum_{x \in \Z} (w(x) - w_{p, \sigma, \alpha}(x))  |u_N(x)|^p \\
    &\leq 2 \lim_{N \rightarrow \infty} \mathcal{M}_{p, \sigma, \alpha}(e_N) = 0.     
\end{align*}
Since $h_{p, \sigma, \alpha} > 0$, we conclude that
$$w (x) = w_{p,\sigma, \alpha}(x), \qquad x \in \Z.$$
This proves the criticality of $w_{p,\sigma, \alpha}$.
\end{proof}

\subsection{Proof of Theorem~\ref{thm: critical Hardy weights for fractional Laplacian}}
\label{subsec: proof of fractional theorem}

Let \(p\in(1,\infty)\), \(\sigma\in(0,1/2)\), and
\(\alpha\in(\sigma,1/2)\). Proposition~\ref{prop: fractional Hardy weights} gives the Hardy inequality
\[
    \mathcal E_p^{\Z,\sigma}(u)
    \geq
    \sum_{x\in\Z} w_{p,\sigma,\alpha}(x)|u(x)|^p,
\]
for \(u\in\ell^p(\Z,\mu_{p,\sigma,\alpha})\), with the weight
\(w_{p,\sigma,\alpha}\) given by \eqref{fractional Hardy weight}. It also gives
the strict positivity of \(w_{p,\sigma,\alpha}\).

By Proposition~\ref{prop: fractional reference measure and asymptotics},
\[
    \mu_{p,\sigma,\alpha}(x)\asymp (1+|x|)^{-2\sigma},
    \qquad x\in\Z.
\]
Hence the admissible space in the preceding inequality is precisely
\(\ell^p(\Z,(1+|x|)^{-2\sigma})\), up to equivalence of norms. The same
proposition gives the asymptotic formula
\[
    w_{p,\sigma,\alpha}(x)
    =
    \frac{\Psi(p,\sigma,\alpha)}{|x|^{2\sigma}}
    +
    O\left(
        \frac{1}
        {|x|^{2\sigma+1-\max\{2\sigma,1-2\alpha\}}}
    \right),
    \qquad |x|\to\infty.
\]

By Proposition~\ref{prop: criticality of fractional Hardy weights},
\(w_{p,\sigma,\alpha}\) is critical in the stated range. The proof is complete.
\qed

\begin{corollary}\label{cor: fractional contractivity}
Let \(p>1\), \(\sigma\in(0,1/2)\), and
\(\alpha\in(\sigma,1/2)\). Then the semigroup
\[
    \left(
    e^{\,t(-\Delta^\sigma+M_{w_{p,\sigma,\alpha}})}
    \right)_{t\geq0}
\]
is contractive on \(\ell^p(\Z)\). Moreover, \(w_{p,\sigma,\alpha}\) is
maximal, in the pointwise order, among nonnegative bounded potentials
with this property whenever
\[
    \alpha\leq\frac{2\sigma+a_p}{2(a_p+1)}.
\]
\end{corollary}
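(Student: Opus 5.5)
The plan is to derive both assertions of Corollary~\ref{cor: fractional contractivity} from Theorem~\ref{thm: contractivity of perturbed semigroup}, applied with $X=\Z$, $k(x,y)=k_\sigma(x-y)$ and $w=w_{p,\sigma,\alpha}$. First I check the hypotheses. The kernel $k$ is uniformly summable, since $\sup_x\sum_y k_\sigma(x-y)=\|k_\sigma\|_{\ell^1(\Z)}<\infty$ by \eqref{e.ksigma-asymptotic}. The weight $w_{p,\sigma,\alpha}$ is bounded: it is positive by Proposition~\ref{prop: fractional Hardy weights}, it tends to $0$ at infinity by Proposition~\ref{prop: fractional reference measure and asymptotics}, and it is pointwise bounded by $\mu_{p,\sigma,\alpha}\lesssim 1$ (see Remark~\ref{rem: reference measure}(c)). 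On $\ell^p(\Z)$ we also have $L_k=\Delta^\sigma$, so $-L_k+M_w=-\Delta^\sigma+M_{w_{p,\sigma,\alpha}}$.

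Contractivity then reduces to condition (1) of Theorem~\ref{thm: contractivity of perturbed semigroup}, namely the Hardy inequality for all $u\in\ell^p(\Z)$. Since $\mu_{p,\sigma,\alpha}\asymp(1+|x|)^{-2\sigma}\lesssim 1$, we have $\ell^p(\Z)\subseteq\ell^p(\Z,\mu_{p,\sigma,\alpha})$. Alternatively, Proposition~\ref{prop: abstract ground state representation for lp} gives the same inclusion directly. Hence \eqref{fractional Hardy inequality with reference measure} holds on $\ell^p(\Z)$, and the equivalence yields that the semigroup is contractive.

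For maximality, let $\alpha$ lie in the critical range and let $w\geq w_{p,\sigma,\alpha}$ be a nonnegative bounded potential whose semigroup is contractive on $\ell^p(\Z)$. Theorem~\ref{thm: contractivity of perturbed semigroup} gives $\mathcal E_p^{\Z,\sigma}(u)\geq\sum w|u|^p$ for all $u\in\ell^p(\Z)$. In particular this holds on $C_c(\Z)$, so $w$ is a Hardy weight in the sense of the Definition. Proposition~\ref{prop: criticality of fractional Hardy weights} then forces $w=w_{p,\sigma,\alpha}$.

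The only step needing care is the boundedness of $w_{p,\sigma,\alpha}$, which is needed so that the semigroup is defined by the kernel exponential. I will handle it through the domination $w_h\le\mu_h$ together with the bound $\mu_h\le 2K$ from Proposition~\ref{prop: abstract ground state representation for lp}.
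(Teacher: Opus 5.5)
Your proposal is correct and follows essentially the same route as the paper. Uniform summability of $k_\sigma$, boundedness of $w_{p,\sigma,\alpha}$, and the Hardy inequality on $\ell^p(\Z)\subseteq\ell^p(\Z,\mu_{p,\sigma,\alpha})$ feed into Theorem~\ref{thm: contractivity of perturbed semigroup} for contractivity; the converse direction of that theorem plus criticality gives maximality. Your justification of $w_{p,\sigma,\alpha}\in\ell^\infty(\Z)$ via $w_h\leq\mu_h\leq 2K$ supplies a detail the paper only asserts.
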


\begin{proof}
The kernel \(k_\sigma\) is uniformly summable and
\(w_{p,\sigma,\alpha}\in\ell^\infty(\Z)\). By
Theorem~\ref{thm: critical Hardy weights for fractional Laplacian},
\[
    \mathcal E_p^{\Z,\sigma}(u)
    \geq
    \sum_{x\in\Z}w_{p,\sigma,\alpha}(x)|u(x)|^p,
    \qquad u\in\ell^p(\Z),
\]
hence the contractivity follows from
Theorem~\ref{thm: contractivity of perturbed semigroup}.
If, in addition,
\[
    \alpha\leq\frac{2\sigma+a_p}{2(a_p+1)},
\]
and \(\widetilde w\geq w_{p,\sigma,\alpha}\) is a nonnegative bounded
potential for which
\(-\Delta^\sigma+M_{\widetilde w}\) generates a contraction semigroup
on \(\ell^p(\Z)\), then
Theorem~\ref{thm: contractivity of perturbed semigroup} implies that
\(\widetilde w\) is a Hardy weight. By the criticality of
\(w_{p,\sigma,\alpha}\), we have
\(\widetilde w=w_{p,\sigma,\alpha}\).
\end{proof}

%\bibliographystyle{plain}
%\bibliographystyle{abbrv}

%\bibliography{nat}

\end{document}